\documentclass[aap,preprint]{imsart}

\RequirePackage{amsthm,amsmath,amsfonts,amssymb,mathrsfs,stmaryrd,stackrel}
\RequirePackage[numbers]{natbib}
\RequirePackage[colorlinks,citecolor=blue,urlcolor=blue]{hyperref}
\RequirePackage{graphicx,color,epsfig,subfigure}
\RequirePackage{bm,bbm}

\startlocaldefs

\newtheorem{theorem}{Theorem}[section]
\newtheorem{proposition}{Proposition}[section]

\newtheorem{corollary}[theorem]{Corollary}
\theoremstyle{remark}

\newtheorem{definition}[theorem]{Definition}


\newcommand{\Prob}[1]{\mathbb{P}\left[#1\right]}

\newcommand{\Exp}[1]{\mathbb{E}\left[#1\right]}

\newcommand{\dd}{\mathrm{d}}

\endlocaldefs

\begin{document}

\begin{frontmatter}
\title{Characterization of blowups via time change in a mean-field neural network}
\runtitle{Blowing up via time change}

\begin{aug}

\author[A,B]{\fnms{Thibaud} \snm{Taillefumier}\ead[label=e1]{ttaillef@austin.utexas.edu}}
\and
\author[A,B]{\fnms{Phillip} \snm{Whitman}\ead[label=e3]{second@somewhere.com}}
\address[A]{Department of Mathematics,
University of Texas, Austin,
}

\address[B]{Department of Neuroscience,
University of Texas, Austin,
}
\end{aug}

\begin{abstract}
Idealized networks of integrate-and-fire neurons with impulse-like interactions obey McKean-Vlasov diffusion equations in the mean-field limit. 
These equations are prone to blowups: for a strong enough interaction coupling, the mean-field rate of interaction diverges in finite time with a finite fraction of neurons spiking simultaneously, thereby marking a macroscopic synchronous event.
Characterizing these blowup singularities analytically is the key to understanding the emergence and persistence of spiking synchrony in mean-field neural models. 
However, such a resolution is hindered by the first-passage nature of the mean-field interaction in classically considered dynamics.
Here, we introduce a delayed Poissonian variation of the classical integrate-and-fire dynamics for which blowups are analytically well defined in the mean-field limit.
Albeit fundamentally nonlinear, we show that this delayed Poissonian dynamics can be transformed into a noninteracting linear dynamics via a deterministic time change.
We specify this time change as the solution of a nonlinear, delayed integral equation via renewal analysis of first-passage problems.
This formulation also reveals that the fraction of simultaneously spiking neurons can be determined via a self-consistent, probability-conservation principle about the time-changed linear dynamics.
We utilize the proposed framework in a companion paper to show analytically the existence of singular mean-field dynamics with sustained synchrony for large enough interaction coupling.
\end{abstract}

\begin{keyword}[class=MSC2020]
\kwd[Primary ]{60G99}
\kwd{60K15, 35Q92, 35D30, 35K67, 45H99}
\end{keyword}

\begin{keyword}
\kwd{mean-field neural network models; blowups in parabolic partial differential equation; regularization by time change;  singular interactions; delayed integral equations; inhomogeneous renewal processes}
\kwd{second keyword}
\end{keyword}

\end{frontmatter}


\section{Introduction}


\subsection{Background} 

This work introduces neural network models for which the emergence of synchrony can be studied analytically in the idealized, mean-field limit of infinite-size networks.
By synchrony, we refer to the possibility that a finite fraction of the network's neurons simultaneously spikes.
Dynamics exhibiting such synchrony can serve as models to study the maintenance of precise temporal information in neural networks \cite{Panzeri:2010,Kasabov:2010,Brette:2015}.
The maintenance of precise temporal information in the face of neural noise remains a debated issue from an experimental and computational perspective.
Mathematical approaches to understand synchrony involve making simplifying assumptions about the individual neuronal processing as well as about the network supporting their interactions.

Integrate-and-fire neurons \cite{Lapicque:1907,Knight:1972} constitute perhaps the simplest class of models susceptible to displaying synchrony \cite{Taillefumier:2013uq}.
In integrate-and-fire models, the internal state of a neuron $i$ is modeled as a continuous-time diffusive process $X_{i,t}$, whose dynamics stochastically integrates past neural interactions.
Spiking times are then defined as first-passage times of this diffusive process to a spiking boundary $L$. 
Upon spiking, the process $X_{i,t}$ resets to a base real value $\Lambda$.
In other words, $X_{i,t^+}=\Lambda$ whenever neuron $i$ spikes at time $t$.
There is no loss of generality in assuming that  $L=0$ and $\Lambda>0$, so that $X_{i,t}$ has nonnegative state space.
Moreover, classical integrate-and-fire models assume that $X_{i,t}$ follows a Wiener diffusive dynamics with negative drift $-\nu<0$ \cite{Carrillo:2015}.
Drifted Wiener processes are the simplest diffusive dynamics for which spikes occur in finite time with probability one, even in the absence of interactions.

A key feature of integrate-and-fire models is that they allow for the occurrence of synchronous spiking events.
This is most conveniently seen by considering a finite neural network with instantaneous, homogeneous, impulse-like excitatory interactions.
For such interactions, if neuron $i$ spikes at time $t$, downstream neurons $j \neq i$ instantaneously update their internal states according to $X_{j,t^+} = X_{j,t^+} - w$, where $ w>0$ is the size of the impulse-like interaction.
Thus, the spiking of neuron $i$ causes the states of all other neurons to move toward the zero spiking boundary, leading to two possible outcomes for downstream neuron $j$: 
either $X_{j,t^+}>w$ and the update merely hastens the next spiking time of neuron $j$, or  $X_{j,t^+} \leq w$ and the interaction causes neuron $j$ to spike in synchrony with $i$ \cite{Faugeras:2011,Taillefumier:2012uq}.
The latter synchronous spiking events occurs with finite probability, as we generically have $\Prob{X_{j,t} \in (0,w]}>0$ for regular diffusion processes.
In turn, the synchronous spiking of downstream neuron $j$ can trigger additional synchronous spiking events in the network, via branching processes referred to as spiking avalanches.
Spiking avalanches are well-defined under the modeling assumptions that neurons transiently enter a post-spiking refractory state and always exit this refractory state by reseting to $\Lambda$  \cite{Taillefumier:2014fk}.
Under such assumptions, neurons can spike at most once within an avalanche and synchronously spiking neurons can be distinguished according to their generation number \cite{Delarue:2015}.

Tellingly, the finite probability to observe a spiking avalanche is maintained in certain simplifying limit, such as the thermodynamic mean-field limit \cite{Amari:1975aa, Faugeras:2009,Touboul:2014,Renart:2004}.
For a homogeneous, excitatory, integrate-and-fire network, the thermodynamic mean-field limit considers a network of $N$ exchangeable neurons in the infinite-size limit, $N\to \infty$, with vanishingly small impulse size $w_N = \lambda/N \to 0$, where $\lambda$ is a parameter quantifying the interaction coupling.
In this mean-field limit, individual neurons only interact with one another via a deterministic population-averaged firing rate $f(t)$ \cite{Caceres:2011,Carrillo:2013,Carrillo:2015}.
Specifically, the dynamics of a representative process $X_t$ obeys a nonlinear partial differential equation (PDE) of the McKean-Vlasov type
\begin{eqnarray}\label{eq:classicalMV}
\partial_t p = \big(\nu + \lambda f(t) \big)  \partial_x p + \partial_x^2 p/2  + f(t^-) \delta_\Lambda \, ,  \quad  \mathrm{with} \quad p(t,x) \, \dd x =\Prob{X_t \in \dd x} \, ,
\end{eqnarray}
and where the effective drift features the firing rate $f(t)$.
The nonlinearity of the above equation stems from the conservation of probability, which equates $f(t)$ with a boundary flux of probability:
\begin{eqnarray}\label{eq:classicalFlux}
 f(t) = \partial_x p(t,0)/2 \, .
\end{eqnarray}
In the following, we refer to equation \eqref{eq:classicalMV} and \eqref{eq:classicalFlux} as the classical McKean-Vlasov (cMV) equations and to the underlying dynamics supporting these equations as the classical mean-field (cMF) dynamics.
Within the setting of cMF dynamics, a blowup occurs at time $T_0$ if the spiking rate diverges when $t \to T_0^-$ and a synchronous event happens if a  fraction of the neurons $\pi_0>0$ synchronously spikes in $T_0$.


\subsection{Motivation} 

Following on seminal computational work in \cite{Brunel:1999aa,Brunel:2000aa}, the cMF dynamics was first investigated in a PDE setting by C\'aceres {\it et al.} \cite{Caceres:2011}, who established the occurrence of blowups.
The existence and regularity of solutions to \eqref{eq:classicalMV} and \eqref{eq:classicalFlux} have been considered from the standpoint of stochastic analysis by several authors \cite{Delarue:2015,Delarue:2015b,Hambly:2019,Nadtochiy:2019,Nadtochiy:2020}.
These authors combined results from the theory of interacting-particle systems \cite{Liggett:1985,Sznitman:1989} and of the convergence of probability measures \cite{Billingsley:2013} to establish criteria for the existence of global solutions \cite{Delarue:2015,Delarue:2015b} and to classify the type of singularities displayed by these solutions \cite{Hambly:2019,Nadtochiy:2019,Nadtochiy:2020}.
However, the analytical characterization of blowup singularities have proven rather challenging.
Here, we propose a modified interacting-particle system with Poisson-like attributes that is also prone to blowup, the so-called delayed Poissonian mean-field (dPMF) model.
By contrast with \cite{Delarue:2015b} and in line with \cite{Caceres:2011,Carrillo:2013}, we only conjecture that the propagation of chaos holds to 
motivate the form of the corresponding mean-field PDE problem.
This conjecture is numerically supported in the weak interaction regime  $\lambda < \Lambda$ and for the strong interaction regime $\lambda > \Lambda$.
The interest of the proposed framework lies in introducing a mean-field model where blowups, including full blowup whereby a finite fraction of neurons fires synchronously, can be studied analytically.
In particular, we utilize this framework in \cite{TTLS} to show the existence of global solutions defined on the whole real lines with an infinite but countable number of blowups for large interaction parameters $\lambda \gg \Lambda$.



\subsection{Approach} 

The crux of our approach is the introduction of an analytically tractable neural-network model that is closely related to the cMF model, the so-called delayed Poissonian mean-field (dPMF) dynamics.
dPMF dynamics are derived from the classical ones by considering that $(a)$ neurons are driven by noisy inputs with Poisson-like attributes and that $(b)$ neurons exhibit a post-spiking refractory period.
Concretely, assumption $(a)$ corresponds to approximating the counting process registering neuronal inputs in the thermodynamic limit by a Gaussian Markov process with time-dependent drift $-\big(\nu + \lambda f(t)\big)$ and unit Fano factor, i.e., with variance and drift of identical magnitude.
At the same time, assumption $(b)$ corresponds to enforcing that neurons remain in a noninteracting, inactive state for a duration $\epsilon$ after reaching the zero spiking threshold and before reseting in $\Lambda$.
In addition of being relevant from a modeling standpoint, the inclusion of a finite refractory period $\epsilon$ allows for the unambiguous definition of dPMF dynamics during synchrony.
Specifically, refractory period enforces that every neuron engaging in an instantaneous spiking avalanche at time $t$ spikes only once and resets in $\Lambda$  at time $t+\epsilon$.
Overall, the delayed Poissonian version of the nonlinear McKean-Vlasov dynamics \eqref{eq:classicalMV} reads
\begin{eqnarray}
\partial_t p = \big(\nu + \lambda f(t) \big) \left( \partial_x p + \partial_x^2 p/2 \right) + f(t-\epsilon) \delta_\Lambda \, ,
\nonumber
\end{eqnarray}
where we will see that the conservation of probability imposes that 
\begin{eqnarray}\label{eq:cPMF}
f(t) = \frac{\nu \partial_x p(t-\epsilon,0)}{2-\lambda  \partial_x p(t,0)} \, .
\end{eqnarray}
The above relation directly indicates the criterion for blowups in dPMF dynamics:
blowups occur whenever $\partial_x p(t,0)/2$, the instantaneous flux through the absorbing boundary, reaches the value $1/\lambda$.
In other words, blowups emerge at finite boundary flux, which allows for the continuous maintenance of the absorbing boundary condition: $p(t,0)=0$.
This is by contrast with cMF dynamics for which blowups involve diverging fluxes at times $T_0$, for which the absorbing boundary condition must  locally fail: $p(T_0,0) \geq 1/\lambda$ \cite{Nadtochiy:2019,Nadtochiy:2020}.
Such singular behavior is a major hurdle to elucidating blowup analytically in cMF dynamics. 
The expected regularized behavior of dPMF dynamics during blowups is the primary motivation for their introduction.

Ideally, the  PDE problem that defines dPMF dynamics shall be established as the mean-field limit of the corresponding finite-size interacting-particle system.
The present work only conjectures that such a mean-field limit holds, which is supported by numerical simulations (except possibly for interaction parameter $\lambda \simeq \Lambda$).
Then, the core idea of our approach is to solve the PDE problem defining dPMF dynamics by formally introducing the time change
\begin{eqnarray}\label{eq:sigmaTime_int}
\sigma = \Phi(t) = \nu t + \lambda F(t) \, , \quad \mathrm{with} \quad  F(t)=\int_0^t f(s) \, \dd s \, ,
\end{eqnarray}
which is a smooth increasing function in the absence of blowups.
Due to the Poissonian attributes of the neuronal drives, the time change $\Phi$ can serve to parametrize the dPMF dynamics of a representative process as $X_t=Y_{\Phi(t)}$, where $Y_\sigma$ is a process obeying a linear, noninteracting dynamics.
The dynamics of $Y_\sigma$ is that of a Wiener process absorbed in zero, with constant negative unit drift and with reset in $\Lambda$, but with time-inhomogeneous refractory period specified via a $\Phi$-dependent delay function $\sigma \mapsto \eta[\Phi](\sigma)$.
In the following, we will refer to $\eta$ as the backward delay function associated to $\Phi$.
Concretely, this means that assuming the backward-delay function $\eta$ known, the transition kernel of $Y_\sigma$ denoted by  $(\sigma,x) \mapsto q(\sigma,x)$ satisfies the time-changed PDE problem 
\begin{eqnarray}\label{eq:qPDE_int}
\partial_\sigma q &=& \partial_x q +\frac{1}{2} \partial^2_{x} q  + \frac{\dd}{\dd \sigma} [G(\sigma-\eta(\sigma))] \delta_{\Lambda}  \, , 
\end{eqnarray}
with absorbing and conservation conditions respectively given by 
\begin{eqnarray}\label{eq:abscons_int}
q(\sigma,0)=0 \quad \mathrm{and} \quad \partial_\sigma G(\sigma)=\partial_x q(\sigma ,0) /2 \, .
\end{eqnarray}
In equations \eqref{eq:qPDE_int} and \eqref {eq:abscons_int}, $G$ denotes the $\eta$-dependent cumulative flux of $Y_\sigma$ through the zero threshold.
By definition of the time change $\Phi$, $G$ is related to the cumulative flux $F$ via $F=G \circ \Phi$.
Moreover, in the absence of blowups, the functional dependence of $\eta$ on the time change $\Phi$ is given by 
\begin{eqnarray}
\eta(\sigma) = \sigma - \Phi(\Psi(\sigma) -\epsilon) \, .
\nonumber
\end{eqnarray}
where $\Psi=\Phi^{-1}$ refers to the inverse time change of $\Phi$.
Thus, $G=G[\Phi]$ actually depends on $\Phi$ via $\eta$, which motivates considering equation \eqref{eq:sigmaTime_int}  as a self-consistent equation specifying admissible time changes:
\begin{eqnarray}\label{eq:selfcons_int}
\Phi(t) = \nu t + \lambda G[\Phi](\Phi(t)) \, .
\end{eqnarray}
Our approach then elaborates on the fact that in the absence of blowups, dPMF dynamics are fully parametrized by the time-change function that uniquely solves equation \eqref{eq:selfcons_int} for some reasonable initial conditions.
Given such a solution $\Phi$, the transition kernel of a representative dPMF dynamics $X_t$ is found as $(t,x) \mapsto p(t,x) = q(\Phi(t),x)$, where $q$ uniquely solves the time-changed PDE for the corresponding backward-delay function $\eta[\Phi]$.
From there, our general aim is to show that this time-changed characterization is preserved in the presence of blowups, thereby justifying dPMF dynamics as a convenient modeling framework to  analytically study mean-field dynamics with blowups.


\subsection{Results} 
Our main result is to characterize explosive dPMF dynamics via a fixed-point problem bearing on a regularized time-changed dynamics.
To state this fixed-point problem, we first need to define a notion of initial conditions in the time-changed picture. 
In principle, the most general initial conditions for the original dPMF dynamics are specified by two measures $(p_0,f_0)$ in $\mathcal{M}(\mathbbm{R}^+) \times \mathcal{M}([-\epsilon,0))$, where 
$\mathcal{M}(I)$ denotes the space of nonnegative measures over the interval $I \subset \mathbbm{R}$.
Moreover, to represent a probability measure, $(p_0,f_0)$ must also satisfy the normalization condition:
\begin{eqnarray}
\int_0^\infty p_0(x) \, \dd x + \int_{-\epsilon}^0 f_0(t) \, \dd t =1 \, .
\nonumber
\end{eqnarray}
With this in mind, the time-changed version of the above initial conditions is specified as follows:

\begin{definition}\label{def:initCond_int}
Given normalized initial conditions $(p_0,f_0)$ in $\mathcal{M}(\mathbbm{R}^+) \times \mathcal{M}([-\epsilon,0))$, the initial conditions for the time-changed problem are defined by $(q_0,g_0)$ in $\mathcal{M}(\mathbbm{R}^+) \times \mathcal{M}([\xi_0,0))$ such that
\begin{eqnarray}
q_0=p_0 \quad \mathrm{and} \quad g_0 = \frac{\dd G_0}{\dd \sigma} \quad \mathrm{with} \quad G_0=(\mathrm{id} - \nu \Psi_0)/\lambda\, ,
\nonumber
\end{eqnarray}
where the function $\Psi_0$ and the number $\xi_0$ are given by:
\begin{eqnarray}
\Psi_0(\sigma) 
&=& 
\inf  \left\{  t \geq 0 \, \bigg \vert \, \nu t + \lambda \int_0^t f_0(s) \, \mathrm{d}s > \sigma \right\} \, , \nonumber\\
\xi_0&=& -\nu \epsilon -\lambda \int_{-\epsilon}^0 f_0(t) \, \dd t < 0 \, .
\nonumber
\end{eqnarray}
\end{definition}

Equipped with the above notion of initial conditions, we are in a position to state the fixed-point problem characterizing possibly explosive dPMF dynamics.
This fixed-point problem will be most conveniently formulated in term of the inverse time change $\Psi=\Phi^{-1}$, which can generically be assumed to be a continuous, nondecreasing function.
Given an inverse time change $\Psi$, the time change $\Phi$ can be recovered as the right-continuous inverse of $\Psi$.
In the time-changed picture, blowups happen if the inverse time change $\Psi$ becomes locally flat and a synchronous event happens if $\Psi$ remains flat for a finite amount of time.
Informally, flat sections of $\Psi$ unfold blowups by freezing time in the original coordinate $t$, while allowing time to pass in the time-changed coordinate $\sigma$.
Such unfolding of blowups in the time-changed picture will allow for the  following characterization of inverse time change $\Psi$, which remains valid for explosive dPMF dynamics.

\begin{theorem}\label{th:fixedpoint_int}
Given time-changed initial conditions $(q_0, g_0)$ in $\mathcal{M}(\mathbbm{R}^+) \times \mathcal{M}([\xi_0,0))$, the  inverse time change $\Psi$ satisfies the fixed-point problem
\begin{eqnarray}
\forall \; \sigma \geq 0 \, , \quad \Psi(\sigma) = 
\left\{
\begin{array}{ccc}
\left( \sigma - \lambda \int_0^\sigma g_0(\xi) \dd \xi \right) / \nu   							& \quad \mathrm{if} &  -\xi_0 \leq \sigma < 0 \, , \vspace{5pt} \nonumber\\
\sup_{0 \leq \xi \leq \sigma} \big( \xi - \lambda G[\eta](\xi)\big) / \nu   & \quad  \mathrm{if}  &  \sigma \geq 0  \, .
\end{array}
\right.
\end{eqnarray}
where $G[\eta]$ is the smooth cumulative flux associated to a linear diffusion dynamics with time-inhomogeneous backward-delay function $\eta: \mathbbm{R}^+ \to \mathbbm{R}^+$.
Given a backward-delay function $\eta$, the cumulative flux $G$ is given as the unique solution to the quasi-renewal equation
\begin{eqnarray}\label{eq:DuHamel_int}
G(\sigma) = \int_0^\infty H(\sigma,x) q_0(x) \, \dd x  + \int_0^\sigma H(\sigma-\tau,\Lambda) \, \dd G(\tau-\eta(\tau))   \, ,
\end{eqnarray}
where by convention we set $G(\sigma)=G_0(\sigma)=\int_0^\sigma g_0(\xi) \dd \xi$ if  $-\xi_0 \leq \sigma < 0$.
Moreover, the integration kernel featured in \eqref{eq:DuHamel_int} is specified as $\sigma \mapsto H(\sigma,x) = \Prob{\tau_x \leq \sigma} $, where $\tau_x$ is the first-passage time to zero of a Wiener process started in $x>0$ and with negative unit drift.  
Finally, the fixed-point nature of the problem follows from the definition of the backward-delay function $\eta$ as the $\Psi$-dependent time-wrapped version of the constant delay $\epsilon$:
\begin{eqnarray}
\eta(\sigma) = \sigma - \Phi(\Psi(\sigma) -\epsilon)   \quad \mathrm{with} \quad \Phi(t)= \inf  \left\{  \sigma \geq \xi_0 \, \big \vert \,\Psi(\sigma) > t \right\}  \, , \nonumber
\end{eqnarray}
for which we consistently have $\eta(0)=- \Phi(-\epsilon)=\xi_0$.
\end{theorem}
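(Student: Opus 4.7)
The plan is to treat the statement as a derivation that reorganizes the content of equations~\eqref{eq:sigmaTime_int}--\eqref{eq:selfcons_int} into a self-contained fixed-point formulation, taking care of the two features that are not immediate from the informal discussion: (i) the behavior of $\Psi$ across blowups, which forces the appearance of a supremum, and (ii) the rigorous identification of $G[\eta]$ as a quasi-renewal integral. The starting algebraic identity is obtained by inverting \eqref{eq:sigmaTime_int} at regular points. Writing $t=\Psi(\sigma)$, using $F=G\circ\Phi$ and $\Phi\circ\Psi(\sigma)=\sigma$ wherever $\Phi$ is strictly increasing, yields $\nu\Psi(\sigma)+\lambda G(\sigma)=\sigma$, i.e.~the candidate expression $(\sigma-\lambda G(\sigma))/\nu$ for $\Psi(\sigma)$. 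Before tackling the explosive regime, I first verify the initial-condition branch on $[\xi_0,0)$: by Definition~\ref{def:initCond_int} one has $\int_0^\sigma g_0(\xi)\,\dd\xi = G_0(\sigma)=(\sigma-\nu\Psi_0(\sigma))/\lambda$, so $(\sigma-\lambda\int_0^\sigma g_0)/\nu=\Psi_0(\sigma)$, and the prescribed branch coincides with the generalized inverse of the cumulative input time change on the past history $[-\epsilon,0)$.

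Next I handle the branch on $\sigma\ge 0$, which is where the supremum is essential. The key observation is that $\Psi$, being defined as the continuous nondecreasing generalized inverse of a right-continuous $\Phi$, must coincide with the monotone upper envelope of any pointwise candidate. Away from blowups $\Phi$ is smooth and strictly increasing, so the raw identity gives $\Psi(\sigma)=(\sigma-\lambda G(\sigma))/\nu$ and the supremum is trivially attained at the endpoint $\xi=\sigma$. Across a blowup, $\Psi$ freezes at some value $t_0$ while $\sigma$ advances across an interval on which $G[\eta]$ accumulates the synchronous mass at rate strictly exceeding $1/\lambda$; hence $(\xi-\lambda G(\xi))/\nu$ is locally decreasing, and the monotone envelope $\sup_{0\le \xi\le\sigma}(\xi-\lambda G(\xi))/\nu$ keeps $\Psi$ pinned at $t_0$ until the flat plateau ends. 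This gives the second branch of the formula and, crucially, a single expression that covers both smooth and explosive phases. The matching at $\sigma=0$ is automatic because $G[\eta](0)=G_0(0)=0$ by the integral convention appended to \eqref{eq:DuHamel_int}.

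For the quasi-renewal equation \eqref{eq:DuHamel_int}, I would apply Duhamel's principle to the linear evolution \eqref{eq:qPDE_int}--\eqref{eq:abscons_int} viewed, for \emph{fixed} $\eta$, as a time-inhomogeneous absorbing-Wiener problem with a prescribed source at $\Lambda$ of strength $\dd G(\sigma-\eta(\sigma))/\dd\sigma$. The conservation condition in \eqref{eq:abscons_int} identifies $G(\sigma)$ with the total absorption flux up to $\sigma$, so decomposing that flux according to where the absorbed mass originated yields the two terms of \eqref{eq:DuHamel_int}: the first integral accounts for initial mass $q_0(x)$ reaching zero with first-passage law $H(\sigma,x)$, and the second accounts for mass reinjected at $\Lambda$ at time $\tau$ (with infinitesimal intensity $\dd G(\tau-\eta(\tau))$) and subsequently absorbed with law $H(\sigma-\tau,\Lambda)$. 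Finally, the consistency relation $\eta(\sigma)=\sigma-\Phi(\Psi(\sigma)-\epsilon)$ follows directly from matching the $t$-coordinate source in the refractory model \eqref{eq:cPMF}, namely $f(t-\epsilon)$, with its $\sigma$-coordinate image: a neuron spiking at $t'=\Psi(\sigma)-\epsilon$ is reinjected at time $\sigma'=\Phi(t')$, and $\sigma-\sigma'=\eta(\sigma)$ by construction; plugging $\sigma=0$ gives $\eta(0)=-\Phi(-\epsilon)$, which is the stated boundary value.

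The main technical obstacle I anticipate is rigorously justifying the sup-formula across blowups: one must verify that $G[\eta]$ defined by \eqref{eq:DuHamel_int} is regular enough (continuous, monotone, locally of bounded variation) that the supremum expression yields a bona fide continuous nondecreasing $\Psi$ whose right-continuous inverse $\Phi$ in turn makes $\eta(\sigma)=\sigma-\Phi(\Psi(\sigma)-\epsilon)$ well defined on $[0,\infty)$ — in particular at the plateau endpoints where $\Phi$ jumps. Resolving this requires showing that the Stieltjes integral $\int_0^\sigma H(\sigma-\tau,\Lambda)\,\dd G(\tau-\eta(\tau))$ absorbs only atoms that match avalanche generations, so that $G[\eta]$ remains continuous even through synchronous events. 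All other elements of the statement reduce to the verifications outlined above.
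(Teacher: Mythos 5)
Your proposal captures the correct \emph{shape} of the argument --- inversion at regular points, monotone envelope across blowups, Duhamel for $G$ --- but it glosses over precisely the technical content that makes the theorem nontrivial, and one step is essentially circular.

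The circularity: you begin by ``inverting \eqref{eq:sigmaTime_int} at regular points'' to get $\nu\Psi(\sigma)+\lambda G(\sigma)=\sigma$, treating $\Phi(t)=\nu t+\lambda F(t)$ and $F=G\circ\Phi$ as given facts. But $F$ is defined through the dPMF dynamics, which are themselves being constructed via the time change; once $\Phi$ is allowed to jump, the relation $X_t=Y_{\Phi(t)}$ is not a smooth change of variable and the identity $F=G\circ\Phi$ is exactly what needs to be \emph{proved}, not assumed. The paper instead takes a fixed candidate $\Phi\in\mathcal{T}$ (possibly discontinuous), builds $q[\Phi]$ from the always-well-posed time-changed PDE, and then asks when $(t,x)\mapsto q(\Phi(t),x)$ is a \emph{weak} solution of the original dPMF problem (Definition \ref{def:weakPDE}). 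Proposition \ref{prop:qPDE} answers this by performing the change of variable $\sigma=\Phi(t)$ inside the weak formulation, which requires the nonsmooth substitution formula of Falkner--Teschl together with the Vol'pert superposition principle to differentiate $t\mapsto v(\Phi(t),x)$ through the jumps of $\Phi$. None of this machinery appears in your sketch, and without it the ``if and only if'' equivalence between the fixed-point equation and genuine dPMF dynamics is not established --- you would only have a formal necessary condition valid off the blowup set.

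The second gap is in the quasi-renewal equation. Your appeal to Duhamel and to ``decomposing absorption flux by origin'' is the right heuristic, but the actual proof (Proposition \ref{prop:Renewal}) rests on the series representation $G(\sigma)=\sum_k\Prob{\xi_k\le\sigma}$, conditioning on the reset times $\tau_k$, and critically on the set identity of Proposition \ref{prop:Peta}, namely $\{\xi\,\vert\,\tau(\xi)\le\sigma\}=\{\xi\,\vert\,\xi\le\sigma-\eta(\sigma)\}$, which makes $\sum_k\Prob{\tau_k\le\sigma}=G(\sigma-\eta(\sigma))$ rigorous when $\eta$ is merely \emph{c\`adl\`ag} and the forward function $\tau$ is only the left-continuous generalized inverse of $\mathrm{id}-\eta$. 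Your phrase ``absorbs only atoms that match avalanche generations'' gestures at this, but the identity you would need is nontrivial and is proved in the paper by a dedicated topological argument. Your translation of $f(t-\epsilon)$ into $\dd G(\sigma-\eta(\sigma))$ and the observation $\eta(0)=-\Phi(-\epsilon)=\xi_0$ are correct, and your description of how the sup pins $\Psi$ during plateaus matches the proof of Proposition \ref{def:fixedPoint}; but the route to the self-consistency condition must go through the weak formulation, and the renewal equation needs the generalized-inverse set identity.
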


We will show that the time-changed formulation of dPMF dynamics yields the existence and uniqueness of dPMF dynamics under an additional assumption about the initial conditions.
That assumption bears on the distribution of active processes at starting time and states that $q_0$ is a locally smooth near zero with $q_0(0)=0$ and $\partial_x q_0(0) /2<1/\lambda$.
In view of \eqref{eq:cPMF}, such an additional assumption precludes a blowup from happening instantaneously.
In turn, this will allow us to show the following existence and uniqueness result:

\begin{theorem}
Given time-changed initial conditions $(q_0,g_0)$ in $\mathcal{M}(\mathbbm{R}^+) \times \mathcal{M}([\xi_0,0))$ such that  $q_0(0)=0$ and $\partial_x q_0(0) /2<1/\lambda$, the fixed-point problem defined in Theorem \ref{th:fixedpoint_int} admits a unique smooth solution $\Psi$ on $[0,S_1]$, where
\begin{eqnarray}
S_1=\inf \lbrace  \sigma>0 \, \vert \, \Psi'(\sigma) \leq 0 \rbrace > 0 \, .
\nonumber
\end{eqnarray}
Moreover, if $S_1<\infty$ and $\Psi''(S_1^-)<0$, this solution can be uniquely continued on $[S_1,S_1+\lambda \pi_1)$ as a constant, where $\pi_1$ solves the self-consistent equation:
\begin{eqnarray}\label{eq:defjump_int}
\pi_1 = \inf \left\{ p \geq 0 \, \bigg \vert \, p > \int_{0}^\infty H(\lambda p, x )  q(S_1,x ) \, \dd x \right\} \, .
\end{eqnarray}
\end{theorem}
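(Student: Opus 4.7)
The plan is to handle the two assertions separately: first the existence and uniqueness of a smooth solution on $[0,S_1]$ up to the first critical time, then the constant continuation across the synchronous event on $[S_1, S_1+\lambda\pi_1)$.

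For the smooth phase, I would exploit that as long as $\Psi$ is strictly increasing, the supremum in Theorem \ref{th:fixedpoint_int} is attained at the right endpoint, so the fixed-point problem collapses to the explicit identity $\nu \Psi(\sigma) = \sigma - \lambda G[\eta](\sigma)$. The condition $q_0(0)=0$ ensures that $\sigma \mapsto \int_0^\infty H(\sigma,x) q_0(x)\,\dd x$ is $C^1$ at $\sigma=0$ with right-derivative $\partial_x q_0(0)/2$, giving $G'(0^+) < 1/\lambda$ and hence $\Psi'(0^+)>0$. I would then set up a Picard iteration on the coupled pair $(\Psi, G)$ in a Banach space of $C^1$ functions on a short interval $[0,\delta]$, treating the backward-delay map $\Psi \mapsto \eta[\Psi]$ and the quasi-renewal operator \eqref{eq:DuHamel_int} as a composition. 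The contraction is driven by the smallness of $\delta$ together with the rapid decay of the reset kernel $H(\sigma-\tau,\Lambda)$ as $\tau \to \sigma^-$ (since $\Lambda>0$), which damps the Riemann--Stieltjes contribution. Standard open-interval continuation then yields a unique smooth solution as long as $\Psi'$ remains positive; the supremum of such intervals is exactly $S_1>0$.

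For the synchronous-event continuation, I would verify by direct substitution that the constant extension $\Psi(\sigma) \equiv \Psi(S_1)$ on $[S_1, S_1+\lambda\pi_1)$ solves the fixed-point problem. On such a flat segment, $\Phi(\Psi(\sigma)-\epsilon)$ is constant in $\sigma$, so the argument $\tau-\eta(\tau)$ is pinned across the flat section and the Stieltjes integral in \eqref{eq:DuHamel_int} receives no new contribution from resets within $[S_1,\sigma]$. By the Markov structure encoded by $H$, the cumulative-flux increment reduces to $G(S_1+\lambda p) - G(S_1) = \int_0^\infty H(\lambda p, x)\, q(S_1,x)\,\dd x$ for $p \in [0,\pi_1)$. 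The sup-formula then demands $\xi - \lambda G(\xi) \leq S_1 - \lambda G(S_1)$ for all $\xi \in [S_1, S_1+\lambda p]$, which collapses to the scalar condition $p \leq \int_0^\infty H(\lambda p, x) q(S_1,x)\,\dd x$. The threshold $\pi_1$ in \eqref{eq:defjump_int} is by definition the smallest $p>0$ at which this inequality first fails, pinning the right endpoint of the flat section and establishing uniqueness of the continuation. Strict positivity $\pi_1 > 0$ is where the assumption $\Psi''(S_1^-) < 0$ enters: it forces the map $p \mapsto \int_0^\infty H(\lambda p, x) q(S_1,x)\,\dd x - p$, whose value and right-derivative at $p=0$ both vanish (the latter because $G'(S_1^-) = 1/\lambda$), to have strictly positive second right-derivative, so the map stays strictly positive on some $(0,\pi_1)$.

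The main obstacle will be closing the Picard iteration in the smooth phase, because the coupled operator involves the nonlocal backward-delay $\eta[\Psi]$ defined through the right-continuous inverse $\Phi=\Psi^{-1}$, together with a Riemann--Stieltjes driving term $\dd G(\tau-\eta(\tau))$. Establishing a contraction on $C^1$ rather than on $C^0$ will require careful Lipschitz estimates on $\Psi \mapsto \eta[\Psi]$ (which depend sensitively on the monotone structure of $\Psi$) and sharp estimates on the tail of $H(\sigma,\Lambda)$ as $\sigma\to 0^+$ to control the modulus of continuity of $G$. Once this smooth phase is under control, the synchronous-event continuation is essentially algebraic, reducing the self-consistency of the avalanche to the one-dimensional fixed-point equation \eqref{eq:defjump_int}.
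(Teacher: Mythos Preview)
Your treatment of the synchronous-event continuation (Part 2) is essentially the paper's argument: on a flat segment of $\Psi$, the backward-time function $\xi(\sigma)=\Phi(\Psi(\sigma)-\epsilon)$ is constant, so the renewal integral freezes and $G(S_1+\lambda p)-G(S_1)=\int_0^\infty H(\lambda p,x)q(S_1,x)\,\dd x$; the sup-formula then reduces to the scalar inequality, and the full-blowup hypothesis $\Psi''(S_1^-)<0$ forces $\zeta(p)=p-\int_0^\infty H(\lambda p,x)q(S_1,x)\,\dd x$ to have $\zeta(0)=\zeta'(0^+)=0$, $\zeta''(0^+)<0$, yielding $\pi_1>0$. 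That part is fine.

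For the smooth phase (Part 1), your outline is sound but your proposed execution diverges from the paper's, and the obstacle you flag is real and not resolved by what you wrote. The paper does \emph{not} iterate directly in $C^1$; instead it introduces a one-parameter family of regularized maps $\mathcal{F}_\delta=\mathcal{S}_\delta\big[(\mathrm{id}-\lambda G[\,\cdot\,])/\nu\big]$ acting on $\mathcal{C}_\delta=\{\Psi:\delta\leq w_\Psi\leq 1/\nu\}$, where $\mathcal{S}_\delta$ is a running-maximum projection that forces slope $\geq\delta$. Working in $\mathcal{C}_\delta$ makes $\Psi\mapsto\xi[\Psi]$ uniformly $1/\delta$-Lipschitz (because $\Phi=\Psi^{-1}$ then has slope $\leq 1/\delta$), which is exactly the estimate you worry about. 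The unique global fixed point $\Psi_\delta$ of $\mathcal{F}_\delta$ is then obtained, and one lets $\delta\to 0^+$; the assumption $\partial_x q_0(0)/2<1/\lambda$ guarantees $\Psi_\delta'(0^+)$ is bounded below independently of $\delta$, so all $\Psi_\delta$ coincide on a common initial interval and the limit is the desired smooth solution up to $S_1$. A second structural point you underplay is that the lower bound $\eta\geq\nu\epsilon$ (inherited from $w_\Phi\geq\nu$) makes the quasi-renewal equation \emph{lose its renewal character} on any interval of length $\leq\nu\epsilon$: there $G(\tau-\eta(\tau))$ depends only on the \emph{initial} data $G_0$, not on $G$ itself, so the map is a genuine contraction once $\|h(\cdot,\Lambda)\|_{0,\sigma}$ is small. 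Your direct $C^1$ scheme would need both of these ingredients to close; without a slope floor the Lipschitz constant for $\Psi\mapsto\eta[\Psi]$ is uncontrolled, and without exploiting $\eta\geq\nu\epsilon$ the self-referential integral term does not obviously contract.
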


The above result indicates how the time-changed process $Y_\sigma$ resolves a blowup episode by alternating two types of dynamics.
Before blowups, the dynamics $Y_\sigma$ is that of an absorbed linear diffusion with resets. 
These resets occur with time-inhomogeneous delays, which depends on the inverse time change $\Psi$.
At the blowup onset $S_1$, $\Psi$ becomes locally flat, indicating that the original time $t=\Psi(\sigma)$ freezes, thereby stalling resets.
As a result, after the blowup onset in $S_1$, the dynamics  of $Y_\sigma$ remains that of an absorbed linear diffusion but without resets.
Such a dynamics persists until a self-consistent blowup exit condition is met in $S_1+\lambda \pi_1$.
This condition follows from \eqref{eq:defjump_int}  and states that it must take $\lambda \pi_1$  time-changed units for a fraction $\pi_1$ of processes to inactivate during a blowup episode.
Finally, note that the generic condition that $\lim_{\sigma \to S_1} \partial^2_\sigma \Psi(\sigma)<0$, which we refer to the full-blowup condition, implies that the blowup is marked for the original dynamics $X_t=Y_{\Phi(t)}$ at $T_1=\Psi(S_1)$ by a well-characterized rate divergence: $f(t) \sim_{t \to T_1^-} 1/\sqrt{T_1-t}$.

In principle, dPMF dynamics could be continued past a blowup episode, and possibly even extended to the whole half-line $\mathbbm{R}^+$.
Showing this in our framework would require to check that $(1)$ the so-called  $(i)$ nonexplosive exit conditions $\partial_\sigma q(S_1+\lambda \pi_1,0)/2<1/\lambda$ and $(ii)$ full-blowup condition $\partial^2_\sigma \Psi(S_1^-)<0$ are constitutively satisfied and that $(2)$ blowup times do not have an accumulation point.
This program is beyond the scope of this work and is the topic of another manuscript \cite{TTLS}, where we show that these conditions hold for large enough interaction parameter $\lambda$.
Here, we only state the main result  of \cite{TTLS} about the existence of global explosive dPMF dynamics:

\begin{theorem}
For large enough $\lambda>\Lambda$, there exists explosive dPMF dynamics defined over the whole half-line $\mathbbm{R}^+$, with a countable infinity of blowups.
These blowups occurs at consecutive times $T_k$  with size $\pi_k$, $k \in \mathbbm{N}$, and are such that $\pi_k$ and $T_{k+1}-T_k$ are both bounded away from zero.
\end{theorem}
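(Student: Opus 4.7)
The plan is to iterate the continuation result of the previous theorem, treating each completed blowup episode as the starting point of a fresh dPMF problem. Write $(T_k,S_k,\pi_k)$ for the time, time-changed onset, and size of the $k$-th blowup, and denote by $q_k(x) = q(S_k+\lambda \pi_k,x)$ the post-blowup density, together with the induced refractory flux $g_k$ on $[\xi_k,0)$ recording the neurons that fired during the blowup avalanche. The first step is to verify that $(q_k,g_k)$ is an admissible pair of initial conditions in the sense of Definition~\ref{def:initCond_int}. Using the exit condition \eqref{eq:defjump_int}, one shows that $q_k$ inherits the absorbing boundary behavior $q_k(0)=0$, while the nonexplosive-exit condition $\partial_x q_k(0)/2 < 1/\lambda$ reduces to a quantitative statement about the first-passage kernel $H$ applied to the pre-blowup profile at $S_k$. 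Once these two hypotheses are checked, the previous theorem produces a unique smooth solution up to the next critical time $S_{k+1}>S_k+\lambda \pi_k$, and the full-blowup condition $\Psi''(S_{k+1}^-)<0$ must in turn be verified to produce $(T_{k+1},\pi_{k+1})$.

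The core of the argument is therefore to exhibit a nonempty set of density profiles $q$ satisfying $q(0)=0$ and $\partial_x q(0)/2<1/\lambda$ that is \emph{invariant} under the blowup-continuation map $q_k \mapsto q_{k+1}$ for $\lambda$ large. The natural candidate is a class of profiles concentrated away from the origin, e.g.\ those supported in $[a,\infty)$ for some $a>0$ and bounded above by a fixed envelope. On this class, the renewal identity \eqref{eq:DuHamel_int} combined with the large-deviations tail of $H(\sigma,x)$ shows that, between blowups, the boundary flux is built almost entirely from the Gaussian-like reset wave emerging from $\delta_\Lambda$, rather than from the initial data. For $\lambda \gg \Lambda$ the reset mass concentrates near $\Lambda$, and its travel time to the origin under unit drift provides a deterministic lower bound for $T_{k+1}-T_k$, independent of $k$, while the blowup size $\pi_k$ is driven to a nontrivial root of \eqref{eq:defjump_int} bounded away from zero because the profile charge near the absorbing boundary is controlled from below by the previous reset.

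The main obstacle is ruling out accumulation of blowups, equivalently preventing the $q_k$ from developing a boundary layer that forces $\partial_x q_k(0)/2 \to 1/\lambda$ as $k \to \infty$. The strategy for this step is a monotonicity-plus-contraction argument on the flux trace: I would introduce the functional $\Gamma(q) = \partial_x q(0)/2$ and show via the explicit representation of $q$ through the heat kernel and the quasi-renewal equation that, on the invariant class described above, $\Gamma(q_{k+1}) \leq \alpha \Gamma(q_k) + \beta$ with $\alpha<1$ and $\beta<1/(2\lambda)$. The contraction constant $\alpha$ comes from the smoothing effect of the reset-and-transport kernel $H(\cdot,\Lambda)$ acting over the minimum inter-blowup gap, and becomes small precisely when $\lambda$ is large enough that the refractory delay dominates the ballistic travel time.

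Once the contraction estimate is in place, a Gr\"onwall-type iteration gives a uniform bound $\Gamma(q_k) \leq \beta/(1-\alpha) < 1/(2\lambda)$, so the nonexplosive-exit condition holds for every $k$, the full-blowup condition follows from the quadratic behavior of $\sigma \mapsto \sigma-\lambda G(\sigma)$ at its maxima, and both $\pi_k$ and $T_{k+1}-T_k$ are bounded below by constants depending only on $\lambda$. The fixed-point problem of Theorem~\ref{th:fixedpoint_int} can then be solved globally on $\mathbbm{R}^+$ by concatenation, yielding the claimed countable infinity of blowups. I expect the verification of the contraction estimate on a sufficiently large invariant class to be the most technical step, requiring careful control of how the refractory mass released during a blowup reshapes the active density, and this is where the threshold $\lambda>\Lambda$ and its quantitative strengthening enter in an essential way.
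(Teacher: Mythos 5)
The paper does not actually prove this theorem. It is stated in the Results subsection of the Introduction as an announcement of the main result of the companion manuscript~\cite{TTLS}, and both there and at the end of Section~\ref{sec:local} the authors explicitly say that establishing it (verifying the nonexplosive exit condition $\partial_\sigma q(S_k+\lambda\pi_k,0)/2<1/\lambda$ at each blowup exit, verifying the full-blowup condition $\lim_{\sigma\to S_k^-}\Psi''(\sigma)<0$ at each onset, and excluding accumulation of blowup times) ``relies on a detailed analysis of the time-changed dynamics and is beyond the scope of this work.'' So there is no proof in this paper for your proposal to be compared against, only the checklist of what such a proof must accomplish.

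Your proposal does correctly identify that checklist and organizes a plan around it: iterate Theorems~\ref{th:smooth} and~\ref{th:jump}, treat each blowup exit as new initial data, find an invariant class of profiles, and rule out accumulation by controlling the trace $\Gamma(q_k)=\partial_x q_k(0)/2$. That is consistent with the paper's stated program, and the idea of a contraction on $\Gamma$ is a reasonable way to make ``no accumulation of blowups'' quantitative. However, two points need repair. First, your stated hypotheses $\alpha<1$ and $\beta<1/(2\lambda)$ do not yield $\Gamma(q_k)\leq\beta/(1-\alpha)<1/(2\lambda)$; you need the stronger condition $\beta<(1-\alpha)/(2\lambda)$, i.e.\ the affine recursion must be calibrated so that its fixed point lies strictly below the critical flux, not merely that the additive constant does. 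Second, the paper signals that the result also requires $\epsilon$ sufficiently small, not only $\lambda$ sufficiently large, which is absent from your plan; the refractory delay governs how long the post-blowup reset mass is withheld from the boundary, and your ``invariant class concentrated away from the origin'' argument would have to quantify this dependence explicitly. Finally, the invariance of your proposed class under the map $q_k\mapsto q_{k+1}$ is the crux and is here only asserted; showing that the post-blowup profile (which includes the full reset mass $\pi_k\delta_\Lambda$ released after the refractory delay) stays in a class controlled by a fixed envelope away from the origin is precisely the hard estimate that the authors defer to~\cite{TTLS}.
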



\subsection{Methodology} 
Overall, the main interest of our approach lies in our ability to resolve a singular dynamics by mapping it onto a regular dynamics, but via possibly discontinuous change of time.
Such an approach avoids resorting to convergence arguments in sample-path spaces equipped with the Skorokhod topology. Our approach proceeds in four steps:

First, we infer the interacting-particle systems approximating the dPMF dynamics by modifying the systems known to approximate cMF dynamics \cite{Delarue:2015,Delarue:2015b}.
This involves considering noisy synaptic interactions, whereby spiking updates in downstream neurons are i.i.d. following a normal law with mean and variance equal to $w_N=\lambda/N$, where $N$ denotes the number of neurons.
Conjecturing propagation of chaos \cite{Sznitman:1989} in the infinite size limit $N \to \infty$ allows us to justify the form of the PDE problem associated to dPMF dynamics, which is only well-posed for nonexplosive dynamics.
In order to extend this PDE characterization  to explosive dPMF dynamics, we must give a weak formulation to the associated PDE problem.
Due to the mean-field nature of dPMF dynamics, this weak formulation involves considering the cumulative flux $F$ as an auxiliary unknown  function.

Second, we define the linear, time-inhomogeneous PDE problem associated to the process $Y_\sigma$ obtained by time change of nonexplosive dPMF dynamics: $X_t=Y_{\Phi(t)}$.
The hypothesis of nonexplosive dynamics is necessary to ensure that the time change $\Phi$ introduced in \eqref{eq:sigmaTime_int} is smooth.
However, considerations from renewal analysis show that the obtained time-changed PDE problem is actually unconditionally well-posed, independent on the assumption of smoothness of $\Phi$.
By this, we mean that for any choice of nondecreasing function $\Phi$, the latter PDE problem has well-behaved solutions $(\sigma,x) \mapsto q[\Phi](\sigma,x)$, in the sense that by contrast with $F=G \circ \Phi$, the associated cumulative flux $G$ is always a smooth function of time.
Therefore, this time-changed picture provides us with a natural framework to define a notion of explosive dPMF dynamics, with possibly many blowups.

Third,  we show that among solutions parametrized by increasing functions $\Phi$, candidate solutions of the form $(t,x) \mapsto p(t,x)=q[\Phi](\Phi(t),x)$ are also weak solutions for dPMF dynamics if and only if the time change $\Phi$ satisfies the self-consistent equation \eqref{eq:selfcons_int}.
Technically, showing this point relies on the substitution formula for nonsmooth changes of variable \cite{Falkner:2012} as well as on the Vol'pert superposition principle \cite{Volpert:1967,DalMaso:1991}.
This result justifies reducing the analysis of dPMF dynamics to the study of a delayed, nonlinear, integral equation defining the fixed-point problem of Theorem \eqref{th:fixedpoint_int}.
Crucially, for bearing on possibly discontinuous increasing time change $\Phi$, this approach fully captures explosive dPMF dynamics.

Fourth, we show that the fixed-point problem of Theorem \eqref{th:fixedpoint_int} admits local solutions for a class of initial conditions that exclude instantaneous blowup.
Under such initial conditions, we establish the existence of initial smooth dPMF dynamics via a contraction argument.
This contraction argument relies on the fact that for nonzero refractory period $\epsilon>0$, the quasi-renewal equation \eqref{eq:DuHamel_int} loses its renewal character at small enough timescale.
Then, repeated application of the Banach fixed-point theorem allows one to specify a smooth solution $\Phi$ up to the first putative blowup time $T_1$.
In the time-changed coordinate $\sigma$, the onset of a blowup episode at $S_1=\Phi(T_1)$ corresponds to withholding resets, which leads to a natural self-consistent equation for blowup sizes $\pi_1$.
Such blowups are certain in the large interaction regime $\lambda \geq \Lambda$ and can be shown to have physical size, in the sense that they must correspond to a finite fraction $0< \pi_1<1$.




\subsection{Structure}

In Section \ref{sec:modelDef}, we introduce the delayed Poissonian (dPMF) dynamics as the conjectured mean-field limit of a certain interacting-particle systems and define its associated weak PDE formulation.
In Section \ref{sec:timeChange}, we  show that in the absence of blowups, dPMF dynamics are fully determined by a time change that maps the original time-homogeneous nonlinear dynamics on time-inhomogeneous linear dynamics. 
In Section \ref{sec:fixedPoint}, we exploit the weak formulation of dPMF dynamics to show that the proposed time-changed formulation remains valid in the presence of blowups, exhibiting  the fixed-point problem that characterizes admissible time changes.
In Section \ref{sec:local}, we show that the fixed-point problem admits local solutions with blowups and resolve analytically these blowups.


\section{The delayed Poisson-McKean-Vlasov dynamics}\label{sec:modelDef}

In this section, we justify the consideration of dPMF dynamics to study the emergence and persistence of blowups in mean-field neural models.
Conjecturing that propagation of chaos holds in the infinite-size limit, we justify the McKean-Vlasov equations defining dPMF dynamics in the absence of blowups.
We then leverage these equations to elaborate a weak formulation that allows for the consideration of explosive dPMF dynamics.


\subsection{Finite-size stochastic model}

We start by defining the finite-size version of the dPMF dynamics in terms of a particle system whose dynamics is unconditionally well-posed.
This particle system consists of a network of $N$ interacting processes $X_{N,i,t}$, $1\leq i \leq N$, whose interaction dynamics is as follows:
$(i)$ Whenever a process $X_{N,i,t}$ hits the spiking boundary at zero, it instantaneously enters an inactive refractory state.
$(ii)$ At the same time, all the other active processes $X_{N,j,t}$ (which are not in the inactive refractory state) are respectively decreased by amounts $w_{N,j,t}$, which are independently drawn from a normal law with mean and variance equal to $\lambda/N$.
$(iii)$ After an inactive (refractory) period of duration $\epsilon>0$, the process $X_{N,i,t}$ restarts its autonomous stochastic dynamics from the reset state $\Lambda>0$.
$(iv)$ In between spiking/interaction times, the autonomous dynamics of active processes follow independent drifted Wiener processes with negative drift $-\nu$.
Correspondingly, an initial condition for the network is given by specifying the starting values of the active processes, i.e. $X_{N,i,0}>0$ if $i$ is active, and the last hitting time of the inactive processes,  i.e., $-\epsilon  \leq \rho_{N,i,0} \leq 0$, if $i$ is inactive.

The above dynamics can be conveniently recapitulated in terms of the stochastic differential equations governing the interacting processes $X_{N,i,t}$, $1\leq i \leq N$.
For $1\leq i \leq N$, these equations takes the general form
\begin{eqnarray}\label{eq:stochEq}
X_{N,i,t} = X_{N,i,0} - \int_0^t \mathbbm{1}_{ \{ X_{N,i,s^-} >0 \} } \dd Z_{N,i,s} + \Lambda M_{N,i,t-\epsilon} \, , 
\end{eqnarray}
where $Z_{N,i,t}$, $1\leq i \leq N$, denote continuous-time driving processes with Poisson-like attributes and 
where $M_{N,i,t}$, $1\leq i \leq N$, are increasing processes counting the number of times that $X_{N,i,t}$ hits the threshold $\Lambda$ before $t$.
The driving processes $Z_{N,i,t}$ are specified in term of cumulative drift functions  $\Phi_{N,i}$ according to
\begin{eqnarray}
Z_{N,i,t} = \Phi_{N,i}(t) +  W_{i,\Phi_{N,i}(t)} \, ,
\nonumber
\end{eqnarray}
where $W_{i,t}$, $1\leq i \leq N$, are independent Wiener processes.
Thus-defined, the driving processes $Z_{N,i,t}$ exhibit Poisson-like attributes in the sense that they have constitutive unit Fano factor.
By contrast, classical particle-system approaches consider driving processes with variable Fano Factor of the form $\Phi_{N,i}(t) +  W_{i,t}$.
The assumption of a constant Fano factor is the key to making an analytical treatment of blowups possible in the mean-field limit.

As generic cumulative functions,  the functions $\Phi_{N,i}$ are only assumed to be right-continuous with left limits, which we refer to as being \emph{c\`{a}dl\`{a}g} following classical probabilistic conventions.
These \emph{c\`{a}dl\`{a}g} cumulative functions $\Phi_{N,i}$ are naturally defined in terms of the counting processes $M_{N,j,t}$ as
\begin{eqnarray}
\Phi_{N,i}(t) = \nu t + \frac{\lambda}{N} \sum_{j\neq i} M_{N,j,t} \nonumber \, ,
\end{eqnarray}
showing that the jump discontinuities in $\Phi_{N,i}$ model interneuronal interactions.
In turn, the counting processes $M_{N,j,t}$ are defined as
\begin{eqnarray}
M_{N,j,t} = \sum_{n>0} \mathbbm{1}_{[0,t]}(\rho_{N,j,n}) \, , \nonumber 
\end{eqnarray}
where $\rho_{N,i,n}$, $n \geq 0$, denote the successive first-passage times of $X_{N,i,t}$ to the zero spiking threshold.
These times are formally defined for all $n\geq 0$ by 
\begin{eqnarray}
\rho_{N,i,n+1} = \inf \left\{  t>r_{N,i,n} \, \big  \vert \,  X_{N,i,t}  \leq 0 \right\} \, . \nonumber 
\end{eqnarray}
where by convention, we set $r_{N,i,0} = 0$ for processes that are active in zero with $X_{N,i,0}>0$ and where $r_{N,i,n} = \rho_{N,i,n}+\epsilon$, $n\geq 1$, denote the successive delayed reset times.

Thus defined, the particle-system dynamics is self exciting: 
every spiking event of a neuron $i$  hastens the spiking of other neurons $j \neq i$ by bringing their states closer to the zero threshold boundary.
Moreover, the particle-system dynamics allows for synchronous spiking as whenever neuron $i$ spikes due to its autonomous dynamics, we generically have that $\Prob{X_{N,j,t} \in (0,w_{N,j,t}]}>0$. 
If the process $X_{N,i,t}$ first hits zero at time $\rho$,  \eqref{eq:stochEq} implies that $X_{N,i,t}$ remains in zero for all $t$ in $(\rho, \rho+\epsilon)$, until it receives an instantaneous kick that enforces a reset in $\Lambda$ at time $r=\rho+\epsilon$.
Thus, \eqref{eq:stochEq} formally identifies the refractory state with zero. 
However, it will prove more convenient to consider the inactive state as an isolated inactive state away from zero.
The reason for this is that such a consideration avoid modeling inactive processes via Dirac-delta mass in zero, so that regular absorbing boundary conditions in zero can be enforced.

Mathematically, the  benefit of including an inactive period $\epsilon$ is to ensure the uniqueness of the particle-system dynamics during spiking avalanches, thereby ensuring that the overall dynamics is well-posed. 
Spiking avalanches occurs when the spiking of a neuron triggers the instantaneous spiking of other neurons.
Neurons that engage in a spiking avalanche can be sorted out according to a generation number.
Generation $0$ contains the lone triggering neuron which is driven to the absorbing boundary by its autonomous dynamics.
Generation $1$ comprises all those neurons that spike due to interactions with the triggering neuron alone.
In general, generation $k>1$, comprises all the neurons that spike from interacting with the neurons of the previous generations alone.
In the absence of a post-spiking inactive period ($\epsilon=0$), it is ambiguous whether the neurons from previous generations are impacted by the spiking of neurons from the following generations.
However, in the presence of an inactive period ($\epsilon>0$), neurons from previous generations are unresponsive to neurons from following generations due to post-spiking transient inactivation.
Accordingly, as a variation on \cite{Delarue:2015b}, we resolve the ambiguity of spiking avalanche in the absence of inactive period by only considering the so-called ``physical dynamics'', obtained from delayed dynamics in the limit $\epsilon \to 0^+$.
These ``physical dynamics'' assume that independent of their generation number, every neuron engaging in a spiking avalanche at time $t$ spikes only once and resets in $\Lambda$ at $t^+$.
We conclude by noting that the delayed dynamics introduced here differ from those considered in \cite{Delarue:2015b}, where the delay bears on the interactions rather than the resets.
This distinction is important as by contrast with reset-delayed dynamics, interaction-delayed dynamics are not prone to explosions.

\subsection{Mean-field dynamics under propagation of chaos}

The particle-system dynamics introduced above primarily differs from the classically considered one by its Poisson-like attributes.
In classically defined particle systems, the jump discontinuities of the driving inputs have fixed size $\lambda/N$ instead of being i.i.d according to a normal law with mean and variance equal to $\lambda/N$.
In \cite{Delarue:2015b}, Delarue {\it et al.} show that the property of propagation of chaos holds in the infinite-size limit of classically defined particle systems.
This property establishes that in the infinite-size limit, a representative process $X_t=\lim_{N \to \infty} X_{N,i,t}$ follows a mean-field dynamics satisfying the PDE problem \eqref{eq:classicalMV} and  \eqref{eq:classicalFlux} originally introduced in \cite{Caceres:2011,Carrillo:2013}.
This particle-system-based approach automatically yields the existence of---possibly explosive---solutions to the PDE problem  \eqref{eq:classicalMV} and \eqref{eq:classicalFlux}.
Here, by contrast with \cite{Delarue:2015b} and in line with \cite{Caceres:2011,Carrillo:2013}, we only conjecture propagation of chaos to motivate the form of the PDE problem defining a novel mean-field dynamics that is prone to blowup.
We then consider these dynamics on their own merit, independent of the conjecture of propagation of chaos. 

The propagation of chaos states that for exchangeable initial conditions, the processes $X_{N,i,t}$, $1 \leq i \leq N$, become i.i.d. in the limit of infinite-size networks $N \to \infty$, so that each individual process follows a mean-field dynamics.
We refer to such a mean-field dynamics as a cMF dynamics for the classical model and a dPMF dynamics for the Poisson-like model.
For both models, the mean-field interaction governing the dynamics of a representative process $X_t$ is mediated by a deterministic cumulative drift $\Phi =\lim_{N \to \infty} \Phi_{N,i}$. 
Formally, this deterministic drift is defined as $\Phi(t) = \nu t + \lambda \Exp{ M_t}$, where the process $M_t$ counts the successive first-passage times of the representative process $X_t$ to the zero spiking threshold:
\begin{eqnarray}\label{eq:MFM}
M_t = \sum_{n>0} \mathbbm{1}_{[0,t]}(\rho_n) \, , \quad \mathrm{with} \quad \rho_{n+1} = \inf \left\{  t>r_n = \rho_n+\epsilon  \, \big  \vert \,  X_{t}  \leq 0 \right\} \, .
\end{eqnarray}
In the following, we will denote the increasing function $\Exp{ M_t}$ by $F(t)$.
In the context of the associated PDE problem, we will refer to $F$  as the cumulative flux function through the zero absorbing boundary. 
Observe that by definition, the function $F$ is an increasing \emph{c\`adl\`ag} function.
This allows one to define the instantaneous firing rate $f$ in the distribution sense as the Radon-Nikodym derivative of $F$ with respect to the Lebesgue measure
$f = \dd F /\dd t$.
Correspondingly, synchronous events whereby a finite fraction of processes spike simultaneously are marked by Dirac-delta mass in $f$.

By contrast with cMF models, the deterministic cumulative drift $\Phi(t)=\nu t +\lambda F(t)$ constitutively impacts neurons with Poissonian attributes in dPMF models, i.e., via a process $Z_t = \Phi(t)+W_{\Phi(t)}$, where $W$ is a driving Wiener process.
Accordingly, the stochastic dPMF dynamics of a representative process is given by
\begin{eqnarray}\label{eq:stochEqMF}
X_t = X_0 - \int_0^t \mathbbm{1}_{ \{ X_{s^-} >0 \} } \dd Z_s + \Lambda M_{t-\epsilon} \, .
\end{eqnarray}
The above equation fully defines dPMF dynamics. 
Because of the self-interaction terms, dPMF dynamics are prone to blowups for large enough interaction coupling and/or for initial conditions that are concentrated near the boundary.
Actually, just as for cMF models, we will see that the cumulative drift $\Phi$ can exhibit $(i)$ singular blowups, corresponding to a divergence of the reset rate $f$ in finite time and $(ii)$ jump discontinuities whereby a finite fraction of the processes spike at the same time.
Our goal is to characterize analytically the emergence of these blowups.
This will require first defining the PDE problem associated to dPMF dynamics in the absence of blowups.

\subsection{McKean-Vlasov equations under smoothness assumptions}\label{subsec:MKV}

For weak interaction, i.e., $\lambda<\Lambda$, we expect dPMF dynamics to be nonexplosive for initial conditions far enough from the spiking threshold, e.g., $p(0,x)=\delta_{x_0}(x)$ with sufficiently large $x_0>0$.
This motivates defining the PDE problem associated to dPMF dynamics under strong regularity assumptions.
Specifically, let us assume that $t \mapsto F(t) = \Exp{M_t}$ is smooth on $[0,T)$ for some $T>0$. 
Then, $f(t) = F'(t)$ represents the nonnegative, smooth, mean-field rate of inactivation.
Under such regularity assumptions, a representative process $X_t$ satisfying \eqref{eq:stochEqMF} admits a probability density $(t,x) \mapsto p(t,x)$ which solves the Fokker-Plank equation
\begin{eqnarray}\label{eq:FP}
\partial_t p = (\nu+\lambda f(t)) \left( \partial_x p + \partial^2_{x} p/2 \right) + \mathbbm{1}_{\{ t>\epsilon \} }f(t-\epsilon) \delta_{\Lambda} \, , 
\end{eqnarray}
with absorbing boundary condition in  $p(t,0) = 0$.
The latter absorbing condition ensures that the process becomes inactive upon reaching zero.
The Dirac-delta source term models the reset in $\Lambda$ of newly activated processes, which happens in $t$ with delayed rate $f(t-\epsilon)$.

To be consistent, the mean-field dynamics specified by \eqref{eq:FP}  needs to conserve the total probability.
This conservation requirement implies that
\begin{eqnarray}\label{eq:cons1}
\partial_t \left( \int_{0}^\infty \! p(t,x) \, \dd x \right) =\mathbbm{1}_{\{t>\epsilon \}}f(t-\epsilon) - f(t) \, .
\end{eqnarray}
Using \eqref{eq:FP}, we can evaluate the left term above as
\begin{eqnarray}
\partial_t \left( \int_{0}^\infty \! p(t,x) \, \dd x \right)  &=&  \int_{0}^\infty  \big( \nu  +  \lambda f(t) \big) \left( \partial_x p(t,x) + \partial^2_{x} p(t,x) /2 \right) \, \dd x  \nonumber \\
 &&  +  \int_{0}^\infty \mathbbm{1}_{\{t>\epsilon \}}f(t-\epsilon) \delta_{\Lambda}(x) \, \dd x \, .  \nonumber 
\end{eqnarray}
Performing integration by parts with absorbing boundary condition in zero then yields
\begin{eqnarray}
\partial_t \left( \int_{0}^\infty \! p(t,x) \, \dd x \right) = -\big(\nu \! + \! \lambda f(t)\big) \partial_x p(0,t)/2 + f(t-\epsilon) \, ,  \nonumber 
\end{eqnarray}
which together with \eqref{eq:cons1} imposes the self-consistent conservation condition $f(t) = (\nu \! + \! \lambda f(t)) \partial p_x(t,0) /2$, ultimately yielding:
\begin{eqnarray}
f(t) = \frac{ \nu \partial_x p(0,t)}{2 -  \lambda \partial_x p(0,t)} \, .  \nonumber 
\end{eqnarray}
The Fokker-Planck equation \eqref{eq:FP} and the above conservation condition fully specify the mean-field dynamics.
As the coefficients of the Fokker-Plank equation depends on its solution via a boundary flux term, the mean-field dynamics is actually a nonlinear Markov evolution of the McKean-Vlasov type.


Finally, observe that to avoid blowups, we have only considered initial conditions of the form $p_0(x)=\delta_{x_0}(x)$, with initially empty inactive state.
However, the PDE problem defined above can be considered for more generic initial conditions, at the possible cost of not having any regular solutions.
According to the delayed nature of the dynamics, these generic initial conditions are naturally specified by
\begin{eqnarray}
p(0,x) =  p_0(x) \quad \mathrm{and} \quad  f(t) = f_0(t) \, ,   \quad  \epsilon  \leq t< 0 \, .  \nonumber 
\end{eqnarray}
with $(p_0,f_0)$ in $\mathcal{M}(\mathbbm{R}^+) \times \mathcal{M}([-\epsilon,0))$, 
where $(p_0,f_0)$ satisfies the normalization condition
\begin{eqnarray}\label{eq:normCond}
 \int_{-\epsilon}^0 f_0(s) \, \dd s  = 1-\int_{0}^\infty p_0(x) \, \dd x \, .
\end{eqnarray}
Given this notion of initial conditions, we define the McKean-Vlasov PDE problem associated to smooth dPMF dynamics as:

\begin{definition}\label{def:delayedPDE} 
Given normalized initial conditions $(p_0,f_0)$ in $\mathcal{M}(\mathbbm{R}^+) \times \mathcal{M}([-\epsilon,0))$, the PDE problem associated to a dPMF dynamics consists in finding  the density function $(t,x) \mapsto p(t,x)= \dd \Prob{0<X_t<x} / \dd x$ solving 
\begin{eqnarray}\label{eq:delayedPDE}
\partial_t p  &=& \big(\nu + \lambda f(t) \big) \left( \partial_x p + \partial^2_{x} p /2 \right) + f(t-\epsilon) \delta_{\Lambda}  \, ,
\end{eqnarray}
on $[0,T) \times [0, \infty)$ for some (possibly infinite) $T>0$ and with absorbing and conservation conditions given by
\begin{eqnarray}\label{eq:delayedPDEcond}
p(t,0) = 0 \quad \mathrm{and} \quad
f(t)   =  \frac{\nu \partial_x p(t,0)}{2  - \lambda \partial_x p(t,0)} \, .
\end{eqnarray}
\end{definition}

The challenge posed by the emergence of blowups is to make sense of  \eqref{eq:delayedPDE} for instantaneous flux $f$ exhibiting finite-time divergence, possibly followed by Dirac-delta mass, corresponding to a jump discontinuity in $F$.
Such singularities present themselves whenever an initially smooth dynamics is such that $t \mapsto \partial_x p(t,0)/2$ reaches $1/\lambda$ in finite time.
As intuition suggests, this blowup criterion will generally be met for sufficiently large interaction parameter.
In particular, we will see that given initial conditions of the form $p_0=\delta_{x_0}$ with $x_0>0$, there is a constant $C_{x_0}$ which only depends on $x_0$ such that a blowup occurs in finite time for all $\lambda>C_{x_0}$.

\subsection{Weak formulation for explosive dPMF dynamics}

In this section, our goal is to propose a weak formulation of the PDE problem \ref{def:delayedPDE} that is amenable to capture explosive dPMF dynamics.
This formulation bears on candidate density functions $(t,x) \mapsto p(t,x)$ in the space of distributions defined as the dual of  $C_{0,\infty}([-\epsilon,T) \times \mathbbm{R})$, the set of compactly supported, smooth functions $u:[-\epsilon,T)\times \mathbbm{R} \rightarrow \mathbbm{R}$.
We derive the announced weak formulation by first considering nonexplosive solutions of the PDE problem \ref{def:delayedPDE}.
For all nonexplosive solutions $(t,x) \mapsto p(t,x)$ and all test functions $u$ in $C_{0,\infty}([-\epsilon,T) \times \mathbbm{R})$, we must have
\begin{eqnarray*}
0&=&\int_0^T \int_{0}^\infty   \left[ \big(\nu + \lambda f(t) \big) \mathcal{L}[p](t,x)  + f(t-\epsilon) \delta_{\Lambda} - \partial_t p(t,x) \right] u(t,x) \, \dd t\, \dd x \,, \\
&=&\int_0^T     \int_{0}^\infty \big[ \big(\nu + \lambda f(t) \big) \mathcal{L}[p](t,x) - \partial_t p(t,x) \big] u(t,x) \, \dd x \, \dd t \\
&& \hspace{200pt} +  \int_0^T f(t-\epsilon) u(t,\Lambda) \, \dd t \, , 
\end{eqnarray*}
where $\mathcal{L}$  denotes the operator $\mathcal{L}=\partial_x  + 1/2\partial^2_{x} $ for brevity.
Taking into account the absorbing boundary condition, integration by parts with respect to space yields
\begin{eqnarray*}
 \int_{0}^\infty  \mathcal{L}[p](t,x) u(t,x) \, \dd x =  \int_{0}^\infty  p(t,x) \mathcal{L}^\dagger [u](t,x)\, \dd x +  \frac{1}{2}\partial_x p(t,0) u(t,0)  \, , 
\end{eqnarray*}
so that integration by part with respect to time produces
\begin{eqnarray*}
\int_{0}^\infty \big[ p(T,x)u(T,x)-p(0,x)u(0,x) \big] \, \dd x &=& \\
&& \hspace{-120pt}\int_0^T\int_{0}^\infty  p(t,x) \big[  \big(\nu + \lambda f(t) \big) \mathcal{L}^\dagger [u](t,x) + \partial_t u(t,x) \big] \, \dd x \, \dd t  \nonumber\\
&& \hspace{-120pt} + \int_0^T \big[ f(t-\epsilon)u(t,\Lambda)-\big(\nu + \lambda f(t) \big) \partial_x p(t,0) u(t,0)/2 \big] \, \dd t \, .
\end{eqnarray*}
Remembering the flux conservation condition $\partial_x p(t,0)=f(t)/(\nu+\lambda f(t))$, we obtain the following weak characterization for nonexplosive solutions
\begin{eqnarray*}
\int_{0}^\infty \big[ p(T,x)u(T,x)-p(0,x)u(0,x) \big] \, \dd x &=& \\
&& \hspace{-120pt}\int_0^T\int_{0}^\infty  p(t,x) \big[  \big(\nu + \lambda f(t) \big) \mathcal{L}^\dagger [u](t,x) + \partial_t u(t,x) \big] \, \dd x \, \dd t  \nonumber\\
&& \hspace{-120pt} + \int_0^T \big[ f(t-\epsilon)u(t,\Lambda)- f(t) u(t,0) \big] \, \dd t \, .
\end{eqnarray*}
The above characterization involves the instantaneous flux $f$ as an unknown, which can be safely assumed to be a nonnegative integrable function.
With that in mind, one can see that the proposed characterization derived for nonexplosive solutions is well-posed for any candidate density function in the space of integrable distributions.
This leads to defining the notion of weak solution for the dPMF dynamics in the presence of blowups as follows:

\begin{definition}\label{def:weakPDE} 
Given normalized initial conditions $(p_0,f_0)$ in $\mathcal{M}(\mathbbm{R}^+) \times \mathcal{M}([-\epsilon,0))$, the density function $(t,x)\mapsto p(t,x)$ is a weak solution of the dPMF dynamics if and only if there is a bounded nondecreasing \emph{c\`adl\`ag} function $F:[-\epsilon,T) \to \mathbbm{R}$ with $\dd F/dt = f_0$ on $[-\epsilon,0)$  such that for all $u$ in $C_{0,\infty}([-\epsilon,T) \times \mathbbm{R})$, we have
\begin{eqnarray}\label{eq:weakPDE} 
\int_{0}^\infty \big[ p(T,x)u(T,x)-p(0,x)u(0,x) \big] \, \dd x 
&=& \\
&&\hspace{-140pt} \int_0^T\int_{0}^\infty  p(t,x) \big[\nu  \mathcal{L}^\dagger [u](t,x) + \partial_t u(t,x) \big] \, \dd x \,  \dd t \nonumber\\
&& \hspace{-140pt}+ \: \lambda  \int_0^T\int_{0}^\infty  p(t^-,x)  \mathcal{L}^\dagger [u](t,x)\, \dd x \, \dd F(t) \nonumber\\
&& \hspace{-140pt}+ \int_0^T u(t,\Lambda)  \, \dd F(t-\epsilon)  - \int_0^T   u(t,0) \, \dd F(t) \, .\nonumber\
\end{eqnarray}
\end{definition}

Clearly, all  nonexplosive solutions $(t,x)\mapsto p(t,x)$ of the PDE problem \ref{def:delayedPDE} are weak solutions of the PDE problem \ref{def:weakPDE} for $F$ equal to the cumulative flux integrating $f$ as defined in \eqref{eq:delayedPDEcond}.
It is also clear that by contrast with the PDE problem \ref{def:delayedPDE}, the definition of weak solutions allows for discontinuous function $F$.
In that respect, observe that we enforce that $F$ is \emph{c\`adl\`ag} to be consistent with the definition of the counting process $M_t$ given in \eqref{eq:MFM}.
When choosing $F$ to be \emph{c\`adl\`ag}, it is then necessary to specify the type of continuity of the integrand for integrals with respect to $F$ as the integrator to be well-defined.
In view of the predictable integrand in stochastic equation \eqref{eq:stochEqMF}, we consistently impose that the integrand be left-continuous whenever $F$ features as the integrator. 
Intuitively, we expect the function $F$ featuring in Definition \ref{def:weakPDE} to be uniquely related to a weak solution $(t,x) \mapsto p(t,x)$, just as for nonexplosive solutions.
This fact is established by the following proposition: 

\begin{proposition}\label{prop:uniqueF}
There is a unique nondecreasing \emph{c\`adl\`ag} function $F$ such that the density function $(t,x)\mapsto p(t,x)$ is a weak solution of the dPMF dynamics.
\end{proposition}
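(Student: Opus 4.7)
The plan is to suppose that $p$ is associated with two admissible cumulative-flux functions $F_1$ and $F_2$ and to show that the signed measure $\dd\mu$ associated to $\mu := F_1 - F_2$ vanishes on $[-\epsilon,T)$. Since both $F_i$ are prescribed on $[-\epsilon,0)$ via $\dd F_i/\dd t = f_0$ and share a common reference value, we already have $\mu \equiv 0$ there, so it suffices to show $\dd\mu \equiv 0$ on $[0,T)$. Subtracting the two weak identities \eqref{eq:weakPDE} written for $(p,F_1)$ and $(p,F_2)$ against a common test function $u \in C_{0,\infty}([-\epsilon,T) \times \mathbbm{R})$ eliminates every term not containing $F$ (the initial and terminal brackets, the $\nu$-term, and the $\partial_t u$ term), leaving
\begin{equation*}
0 = \lambda \int_0^T \!\! \int_0^\infty p(t^-,x)\, \mathcal{L}^\dagger[u](t,x)\, \dd x\, \dd\mu(t) + \int_0^T u(t,\Lambda)\, \dd\mu(t-\epsilon) - \int_0^T u(t,0)\, \dd\mu(t).
\end{equation*}

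My main step would be to test this identity against separable functions $u(t,x) = \phi(t)\chi_R(x)$, where $\phi \in C_c^\infty([-\epsilon,T))$ is arbitrary and $\chi_R(x) := \chi(x/R)$ is a dilation of a fixed plateau cutoff $\chi$ with $\chi \equiv 1$ on $[-1,1]$ and $\mathrm{supp}\,\chi \subset [-2,2]$. For $R > \Lambda$ one has $\chi_R(0) = \chi_R(\Lambda) = 1$, while the chain rule gives $\|\chi_R'\|_\infty = O(1/R)$ and $\|\chi_R''\|_\infty = O(1/R^2)$. Using that $p$ is a (sub)probability density, so $\int_0^\infty p(t^-,x)\,\dd x \leq 1$, the first integral is dominated by $(C/R)\|\phi\|_\infty\,|\mu|([0,T))$ and vanishes as $R \to \infty$. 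In the limit only the boundary terms survive, yielding the shift-invariance identity
\begin{equation*}
\int_0^T \phi(t)\,\dd\mu(t-\epsilon) = \int_0^T \phi(t)\,\dd\mu(t), \qquad \phi \in C_c^\infty([-\epsilon,T)).
\end{equation*}

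A straightforward induction on the consecutive windows $[k\epsilon,(k+1)\epsilon)$, $k \geq 0$, then delivers the conclusion: a test function supported in $[k\epsilon,(k+1)\epsilon)$ has its translated counterpart supported in $[(k-1)\epsilon,k\epsilon)$, on which $\dd\mu$ is already known to vanish by the inductive hypothesis, forcing $\dd\mu \equiv 0$ on $[k\epsilon,(k+1)\epsilon)$ too. The induction exhausts $[0,T)$ in $\lceil T/\epsilon \rceil$ steps and, by right-continuity of $\mu$ with $\mu(-\epsilon)=0$, returns $\mu \equiv 0$. The main obstacle I foresee is technical rather than conceptual: it lies in justifying the $R \to \infty$ limit, in particular the integrability of $p$ in $x$ uniformly in $t$, which must be extracted from the weak formulation itself (for instance by first testing against a time-independent cutoff to obtain a mass-conservation identity); and in the careful bookkeeping of potential atoms of $\mu$ at the epoch boundaries $k\epsilon$, which can be handled by extending each inductive window slightly to the left of $k\epsilon$ so that atomic contributions are absorbed before they become a right endpoint in the next step.
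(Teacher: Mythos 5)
Your proposal is correct and follows essentially the same route as the paper: both proofs specialize the weak formulation to test functions that are constant in $x$ (so that $\mathcal{L}^\dagger[u] = 0$, which kills the interaction term), extract the $\epsilon$-periodicity of $\mu = F_1 - F_2$ from the surviving boundary-flux terms, and propagate $\mu \equiv 0$ forward from $[-\epsilon,0)$ by iterating in windows of length $\epsilon$. The paper appeals directly to $u=1$ (justified by remarking that the weak identity extends to bounded, non-compactly-supported test functions when $p$ is integrable), while your plateau-cutoff $\chi_R$ with $R\to\infty$ is a more careful derivation of the same step; conversely, the paper's pointwise statement $F_1(t)-F_1(t-\epsilon)=F_2(t)-F_2(t-\epsilon)$ sidesteps the measure-level atom bookkeeping at the epoch boundaries $k\epsilon$ that you correctly flag as a nuisance in the distributional formulation.
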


\begin{proof}
Observe that for integrable density functions, the defining property of weak solutions also holds for smooth test functions with bounded derivatives of all orders.
Then, specifying \eqref{eq:weakPDE} for $u=1$ yields
\begin{eqnarray}\label{eq:u1}
\int_{0}^\infty p(T,x)\, \dd x-\int_{0}^\infty p(0,x)  \, \dd x 
=
\big( F(T-\epsilon) -F (-\epsilon) \big)  - \big( F(T) -F (0) \big) \, .
\end{eqnarray}
Consider two functions $F_1$ and $F_2$ such that $p$ is a weak solution of the dPMF dynamics. The initial conditions impose that we have 
\begin{eqnarray*}
F_1(0)-F_1(-\epsilon)=F_2(0)-F_2(-\epsilon)=\int_{-\epsilon}^0 f_0(t) \, \dd t \, .
\end{eqnarray*}
Therefore, specifying \eqref{eq:u1} for $F_1$ and $F_2$ and forming the difference yields
\begin{eqnarray*}
F_1(t)-F_1(t-\epsilon)=F_2(t)-F_2(t-\epsilon) \, ,
\end{eqnarray*}
so that, $F_1-F_2$ is an $\epsilon$-periodic function.
As $F_1 = F_2$ on the interval $[-\epsilon,0)$, we necessarily have $F_1=F_2$  for all $t \leq 0$ by $\epsilon$-periodicity.
\end{proof}

In the following our strategy will be to use the above notion of weak solutions to screen candidate explosive solutions defined via time change for \emph{bona fide} dPMF dynamics.

\section{Linearization via implicitly defined time change}\label{sec:timeChange}

In this section, we show that under certain regularity assumptions, dPMF dynamics can be turned into noninteracting linear dynamics via a time change. 
We then interpret these dynamics probabilistically via renewal analysis to establish that they are constitutively well-posed, independent of the time-change function.
Such a realization provides the basis to define explosive dPMF dynamics in the time-changed picture.

\subsection{Conditionally linear dynamics}

Informally, dPMF dynamics admit blowups at those times $T$ for which the instantaneous inactivation flux $f$ diverges: $\lim_{t \to T^-} f(t) =\infty$.
Characterizing such blowup times analytically entails studying the PDE problem  \ref{def:delayedPDE} with the drift, diffusion, and reset coefficients that are all allowed to locally diverge.
In general, this is a hard problem that cannot be tackled analytically.
However, for drift and diffusion coefficients with Poisson-like attributes, the problem is tractable thanks to the availability of a regularized time-changed formulation.
Not surprisingly, the function $\Phi$ operating this time change can be guessed as the integral function of the drift:
\begin{eqnarray}\label{eq:Phi}
t \mapsto \Phi(t) = \nu t+ \lambda F(t) \, .
\end{eqnarray}
This approach suggests considering that solutions to the PDE problem \ref{def:delayedPDE} as parametrized by a time change $\Phi$, which shall be viewed as the fundamental unknown of the problem.
In light of \eqref{eq:Phi}, we shall look for solution time change $\Phi$ in the following class of functions: 

\begin{definition}\label{def:Phi}
We define the class of valid time changes $\mathcal{T}$ as the set of \emph{c\`adl\`ag} functions $\Phi:[-\epsilon, \infty)\to[\xi_0, \infty)$, such that their difference quotients are lower bounded by $\nu$: for all $y,x \leq -\epsilon$, $x \neq y$, we have
\begin{eqnarray*}
w_\Phi(y,x)=\frac{\Phi(y)-\Phi(x)}{y-x} \geq \nu \, .
\end{eqnarray*}
\end{definition}

In general, to be a valid time change, we only require a function $\Phi: \mathbbm{R}^+ \to  \mathbbm{R}^+$ to be a nondecreasing \emph{c\`{a}dl\`{a}g} function. 
This means that time changes $\Phi$ must exclude time-reversal point at which the changed time $\sigma = \Phi(t)$ would start flowing backward when the original time $t$ keeps moving forward.
Here, the time change Definition \ref{def:Phi} additionally imposes that $\Phi$ has no flat region as we have $\nu>0$.
As a result, specifying the inverse time change $\Phi^{-1}: \mathbbm{R}^+ \to  \mathbbm{R}^+$ as the right-continuous generalized inverse of $\Phi$, actually yields a continuous function $\Psi$.
It is then clear that $\Phi$ is the right-continuous inverse of $\Psi$.

\begin{definition}\label{def:Psi}
Given a time change $\Phi$ in $\mathcal{T}$, the inverse time change $\Phi^{-1}:[\xi_0,\infty) \to [-\epsilon,\infty)$ is defined as the continuous function 
\begin{eqnarray*}
\sigma \mapsto \Psi(\sigma)=\Phi^{-1}(\sigma) = \inf \left\{ t \geq 0 \, \vert \, \Phi(t) > \sigma \right\} \, .
\end{eqnarray*}
\end{definition}

Importantly, valid time changes include function $\Phi$ with discontinuous jumps---or equivalently flat regions for $\Psi$.
Such discontinuities will correspond to the occurrence of synchronous events, at those times for which inactivation on the absorbing boundary has finite probability. 
The key to unlocking these synchronous events is that the time change $\Phi$ maps the dynamics of an eventually singular, interacting dynamics onto that of a constitutively regular, noninteracting one.
When unfolding along the new time coordinate $\sigma=\Phi(t)$, this regular dynamics will only depend on $\Phi$ via the time wrapping of the refractory period $\epsilon$.
Such time wrapping is captured by the so-called backward-delay function, which is defined as follows: 

\begin{definition}\label{def:Eta}
Given the time change $\Phi$ in $\mathcal{T}$, we define the corresponding backward-delay function $\eta:[0,\infty) \to \mathbbm{R}^+$ by
\begin{eqnarray*}
\eta(\sigma) = \sigma - \Phi \left( \Psi(\sigma) - \epsilon \right) \, , \quad \sigma \geq 0 \, .
\end{eqnarray*}
We denote  the set of backward functions $\lbrace \eta[\Phi]\rbrace_{\Phi \in \mathcal{T}}$ by $\mathcal{W}$.
\end{definition}

As $w_\Phi \geq \nu$ for all $\Phi$ in $\mathcal{T}$, it is clear that for all $\eta$ in $\mathcal{W}$, we actually have $\eta  \geq \nu \epsilon$, so that all delays are bounded away from zero.
%
Time-wrapped-delay function $\eta$ in $\mathcal{W}$  will serve to parametrize the time-changed dynamics obtained via $\Phi$ in $\mathcal{T}$.
These time-changed dynamics will be that of a modified Wiener process $Y_\sigma$ with negative unit drift, inactivation on the zero boundary, and reset in $\Lambda$ after a refractory period specified by $\eta$.
Consequently, we define time-changed dynamics as the processes $Y_\sigma$ solutions to the following stochastic evolution:

\begin{definition}\label{def:Ysigma}
Denoting the canonical Wiener process by $W_\sigma$, we define the time-changed processes $Y_\sigma$ as solutions to the stochastic evolution
\begin{eqnarray}\label{eq:Ysigma}
Y_\sigma = -\sigma+ \int_0^\sigma \mathbbm{1}_{\{Y_{\xi^-}>0\}}\, \dd W_\xi + \Lambda N_{\sigma-\eta(\sigma)} \, ,\quad \mathrm{with} \quad N_\sigma = \sum_{n>0} \mathbbm{1}_{[\xi_0,t]}(\xi_n) \, ,
\end{eqnarray}
 where the process $N_\sigma$ counts the successive first-passage times $\xi_n$ of the process $Y_\sigma$ to the absorbing boundary:
\begin{eqnarray*}
\xi_{n+1} = \inf \left\{  \sigma >0  \, \big  \vert \, \sigma-\eta(\sigma) > \xi_n, Y_\sigma  \leq 0 \right\}  \, .
\end{eqnarray*}
\end{definition}

A time-changed process $Y_\sigma$ is uniquely specified by imposing elementary initial condition, which takes an alternative formulation: either the process is active $Y_0=x>0$ and $N_0=0$, either the process has entered refractory period at some earlier time $\xi$ so that $Y_0=0$ and $N_\sigma=\mathbbm{1}_{\sigma \geq \xi}$ for $\xi_0 \leq \sigma < 0$.
Generic initial conditions are given by considering that $(x,\xi)$ is sampled from some probability distribution on $\{ (0,\infty) \times \{ 0 \} \} \cup \{ \{ 0 \} \times [\xi_0,0) \}$.
This amounts to choosing a normalized pair of distributions $(q_0,g_0)$  in $\mathcal{M}(\mathbbm{R}^+) \times \mathcal{M}([\xi_0,0))$. 
The ensuing dynamics is well-defined as long as the backward-delay function $\eta \geq \nu \epsilon$ is locally bounded, which is always the case for valid time change $\Phi$.
Of particular interest is the fact that such dynamics can accommodate jump discontinuities in $\eta$.
This is perhaps best seen by considering the so-called backward-time function $\mathbbm{R}^+ \to [\xi_0,\infty )$, $\sigma \mapsto \sigma-\eta(\sigma)$, featured in the time-delayed counting process of \eqref{eq:Ysigma}.
When unambiguous, we will denote this backward-time function by $\xi$ and refer to it as the ``function $\xi$'' to differentiate from when $\xi$ plays the role of a real variable.
By construction,  the function $\xi$ satisfies $\xi(\sigma) =\sigma-\eta(\sigma)= \Phi \left( \Psi(\sigma) - \epsilon \right)$, and is thus a nondecreasing \emph{c\`adl\`ag} function, possibly admitting discontinuities and flat regions.
Specifically, denoting by $\mathcal{D}_\Phi$ the countable set of discontinuous time of $\Phi$ in $[-\epsilon,\infty)$, the function $\xi$ has discontinuities on the countable set
\begin{eqnarray*}
\mathcal{D}_\xi = \cup_{t \in \mathcal{D}_\Phi} \{ \inf \Psi^{-1}(\{ t+\epsilon \}) \} = \cup_{t \in \mathcal{D}_\Phi} \{ \inf \{\sigma \, \vert \, \Psi(\sigma)=t+\epsilon \}  \} \, ,
\end{eqnarray*}
whereas it is flat on the countable disjoint union of discontinuity intervals of $\Phi$:
\begin{eqnarray*}
\mathcal{J}_\xi = \mathcal{J}_\Psi = \cup_{t \in \mathcal{D}_\Phi} \{  \Psi^{-1}(\{ t \}) \}  = \cup_{t \in \mathcal{D}_\Phi} [\Phi(t^-), \Phi(t)] \, .
\end{eqnarray*}

The discontinuities and flat regions of the function $\xi$ will play the central part in explaining the occurrence of synchronous events in the original dPMF dynamics from analyzing the dynamics of $Y_\sigma$.
In this perspective, it is worth completing the time-changed picture of the dPMF dynamics by stating  the PDE problem attached to the dynamics of $Y_\sigma$:

\begin{definition}\label{def:qPDE}
Given a backward-delay function $\eta$ in $\mathcal{W}$ and some normalized initial conditions $(q_0,g_0)$ in $\mathcal{M}(\mathbbm{R}^+) \times \mathcal{M}([\xi_0,0))$, $\xi_0=-\eta(0)$,  the density function $(\sigma,x) \mapsto q(\sigma,x)=\dd \Prob{0<Y_\sigma \leq x}/\dd x$ solves the time-changed PDE problem 
\begin{eqnarray}\label{eq:qPDE}
\partial_\sigma q &=& \partial_x q +\frac{1}{2} \partial^2_{x} q  + \frac{\dd}{\dd \sigma} [G(\sigma-\eta(\sigma))] \delta_{\Lambda}  \, , 
\end{eqnarray}
with absorbing and conservation conditions respectively given by 
\begin{eqnarray}\label{eq:abscons}
q(\sigma,0)=0 \quad \mathrm{and} \quad g(\sigma )= \partial_\sigma G(\sigma)=\partial_x q(\sigma ,0) /2 \, .
\end{eqnarray}
\end{definition}

We refer to the time-changed PDE problem \ref{def:qPDE} as a regularized one because the emergence of  synchrony will only involves discontinuities in the backward-delay function rather than diverging drift and diffusion coefficients.
This is obvious from the fact that \eqref{eq:qPDE} features constant unit drift and diffusion coefficients, so that all the interactions present in the original problem will be mediated by $\eta$ in the time-changed dynamics.
The boundary condition \eqref{eq:abscons} identifies $g$ as the absorbing boundary flux of $Y_{\sigma}$, i.e., as its instantaneous inactivation rate.
Because of the nonhomogeneity of $\eta$, the reset rate with which $Y_{\sigma}$ activates is generally distinct from the  inactivation rate, as shown by the prefactor of the Dirac-delta source term in \eqref{eq:qPDE}.
Technically, this prefactor is defined as the Radon-Nikodym derivative of the measure specified by the cumulative function $\sigma \mapsto G(\sigma-\eta(\sigma))$ with respect to the Lebesgue measure on $[0,\infty)$.
This definition is justified by the fact that for all $\Phi$ in $\mathcal{T}$, $\sigma \mapsto \sigma-\eta(\sigma)=\Phi \left( \Psi(\sigma) - \epsilon \right)$ is a nondecreasing function and that $G$ is defined as a cumulative flux function.
Moreover, this definition allows for possibly discontinuous backward-delay functions $\eta$, as we will see that $G$ is uniquely determined as smooth function in the next section.

\subsection{Time-inhomogeneous renewal process}

The cumulative flux $G$ is instrumental in specifying the dynamics of the time-changed process $Y_\sigma$, whose density function solves the PDE problem \ref{def:qPDE}.
Given generic initial conditions, it is clear that $G$ shall only depend on the backward-delay functions $\eta$.
In the next section, we will make this $\eta$-dependence explicit by adapting results from elementary renewal analysis.
As a preliminary to this objective, we devote this section to exhibiting the renewal character of the time-changed dynamics.
In this perspective, let us introduce $\lbrace \tau_k \rbrace_{k \geq 1}$, the increasing sequence of reset times to be distinguished from the sequence of inactivation times $\{  \xi_k \}_{k \geq 0}$, with the convention that $\tau_0=0$.
These newly-introduced reset times can also be defined in terms of the backward-delay function $\eta$ as:

\begin{proposition}\label{prop:tauDef}
Given a backward-delay function $\eta$ in $\mathcal{W}$, the reset times $\lbrace \tau_k \rbrace_{k \geq 1}$ of the time-changed dynamics $Y_\sigma$ satisfy
\begin{eqnarray}\label{eq:tauDef}
\tau_k=\tau(\xi_k) \, , \quad \mathrm{with} \quad \tau(\xi) = \inf \{ \sigma > 0 \, \vert \, \xi(\sigma) = \sigma-\eta(\sigma) \geq \xi \} \, ,
\end{eqnarray}
where the forward function $\tau$ is the left-continuous generalized inverse of the nondecreasing function $\xi=\mathrm{id}-\eta$.
\end{proposition}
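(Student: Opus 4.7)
The plan is to read off the reset times $\tau_k$ directly from the stochastic evolution \eqref{eq:Ysigma} and then verify that the resulting inf formula coincides with the standard left-continuous generalized inverse of the nondecreasing \emph{c\`adl\`ag} function $\xi=\mathrm{id}-\eta$. First I would unpack the reset term $\Lambda N_{\sigma-\eta(\sigma)}$ in \eqref{eq:Ysigma}. By Definition \ref{def:Ysigma}, $N_\xi$ is the step process that counts the inactivation times $\lbrace \xi_n \rbrace_{n \geq 1}$, so the composed process $\sigma \mapsto N_{\xi(\sigma)}$ jumps by one exactly when $\xi(\sigma)=\sigma-\eta(\sigma)$ first attains a new level $\xi_k$. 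Since each such jump is by construction a reset, the $k$-th reset time is $\tau_k = \inf\lbrace \sigma > 0 \,\vert\, \xi(\sigma) \geq \xi_k \rbrace = \tau(\xi_k)$, which is the desired identity.

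Next, I would verify two complementary points. On the one hand, I would check the internal consistency of the labelling, i.e. $\tau_k \leq \xi_{k+1}$, so that the $k$-th reset occurs no later than the next inactivation. This follows from the defining constraint $\xi_{k+1}-\eta(\xi_{k+1}) > \xi_k$ in Definition \ref{def:Ysigma}, which places $\xi_{k+1}$ in the set $\lbrace \sigma > 0 \,\vert\, \xi(\sigma) \geq \xi_k \rbrace$, combined with the uniform lower bound $\eta \geq \nu \epsilon > 0$ recorded just after Definition \ref{def:Eta}, which rules out accumulation of reset and inactivation times. On the other hand, to identify $\tau$ as the left-continuous generalized inverse of $\xi$, I would invoke the classical fact that for any nondecreasing \emph{c\`adl\`ag} function, the map $y \mapsto \inf\lbrace \sigma>0 \,\vert\, \xi(\sigma) \geq y\rbrace$ is precisely its left-continuous generalized inverse. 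The representation $\xi=\Phi \circ (\Psi-\epsilon)$ from Definition \ref{def:Eta}, together with $\Phi$ being \emph{c\`adl\`ag} and $\Psi$ being continuous (Definition \ref{def:Psi}), confirms that $\xi$ is a nondecreasing \emph{c\`adl\`ag} function, so the general statement applies.

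The subtlety I expect to require the most care is the behaviour of $\tau$ at the flat regions and jumps of $\xi$, which correspond precisely to the discontinuities of the time change $\Phi$ and its inverse $\Psi$. When $\xi$ is flat at level $\xi_k$ over an interval, the reset triggered by $\xi_k$ must be assigned to the left endpoint of that interval, which is what the inf formula delivers and is also the reason $\tau$ is left-continuous rather than right-continuous. When $\xi$ jumps strictly past $\xi_k$, the reset is instead assigned to the jump time itself. Both cases are captured correctly by the inf formula, but checking that neither case allows a stray inactivation $\xi_{k+1}$ to be inserted inside a pre-reset window relies essentially on the strict lower bound $\eta \geq \nu \epsilon$ together with the monotonicity of $\xi$.
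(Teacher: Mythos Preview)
Your proposal is correct and follows essentially the same approach as the paper: both arguments read the reset times off the composition $N\circ\xi$, use the right-continuity of $N$ (and the c\`adl\`ag nondecreasing property of $\xi$) to pin the reset at the left endpoint when $\xi^{-1}(\{\xi_k\})$ is a nontrivial interval, and thereby identify $\tau$ with the left-continuous generalized inverse of $\xi$. Your additional consistency check $\tau_k\leq\xi_{k+1}$ and explicit treatment of the jump case are not in the paper's proof but are natural elaborations rather than a different route.
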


Again, just as for the function $\xi$, we will refer to the ``function $\tau$'' when  $\tau$ designates the forward function defined in \eqref{eq:tauDef} rather than a real variable.

\begin{proof}[Proof of Proposition \ref{prop:tauDef}]
This follows  from the fact that the delayed process $N_{\sigma-\eta(\sigma)}$ involved in \eqref{eq:Ysigma} results from the composition of the $\emph{c\`adl\`ag}$ counting process $N_t$ with the $\emph{c\`adl\`ag}$ nondecreasing function $\xi=\mathrm{id}-\eta$.
To see why, suppose that $Y_\sigma$ inactivates in $\xi_1$, i.e., that $N_\sigma$ has a jump discontinuity in $\xi_1$. 
Then, $Y_\sigma$ remains in zero for $\sigma>\xi_1$ until the first discontinuity time of the reset counting process $N_{\xi(\sigma)}$.
If $\xi^{-1}(\{\xi_1 \})=\{\tau_1 \}$ is a singleton, we set $\tau(\xi_1)=\tau_1$.
Otherwise,  $\xi^{-1}(\{\xi_1 \})$ is an interval including its left endpoint denoted by $\tau_1$.
By right-continuity of $N_\sigma$, the first discontinuity time of the composed process $N_{\xi(\sigma)}$  must be $\tau_1$, which is defined as: 
\begin{eqnarray*}
\tau_1 = \inf  \{ \sigma > 0 \, \vert \, \xi(\sigma) = \xi_1 \} = \inf  \{ \sigma > 0 \, \vert \, \xi(\sigma)  \geq \xi_1 \} \, .
\end{eqnarray*}
This justifies defining the function $\tau$  as the left-continuous generalized inverse of $\xi$.
\end{proof}

By  definition \eqref{eq:tauDef},  for all $k\geq 1$, $\tau_k$ satisfies $\tau_k-\xi_k \geq \eta(\tau_k)$ with equality if $\xi_k$ is a continuity point of $\tau$.
Otherwise, we can only say that $\tau_k-\xi_k \geq \eta(\tau_k)$. 
%
%
Thus the refractory period of $Y_\sigma$ does not necessarily coincide with the backward-delay function $\eta$ at the reset time.
However, if $\eta$ is uniformly bounded by $\Vert \eta \Vert_{0,\infty}$ on $\mathbbm{R}^+$, we have
\begin{eqnarray}\label{eq:tauIneq}
\tau(\xi) \leq  \inf \Big \{ \sigma > 0 \, \Big \vert \, \sigma- \sup_{  s \geq 0 }\eta(s) \geq \xi \Big \} =  \xi + \Vert \eta \Vert_{0,\infty} \, ,
\end{eqnarray}
Thus, in general, we have  $\eta(\tau_k) \leq \tau_k-\xi_k \leq  \Vert \eta \Vert_{0, \infty}$.

Clarifying the possible continuity issues of the time-changed dynamics motivates introducing one more delay function, the so-called forward-delay function defined by $\gamma=\tau-\mathrm{id}$.
By contrast with the backward-delay function, $\gamma$ allows us to consider  the refractory period as a function of the inactivation time: $\tau_k-\xi_k=\tau(\xi_k)-\xi_k=\gamma(\xi_k)$.
Backward and forward delay functions are naturally related via the following properties:

\begin{proposition}
$(i)$ For all backward delay functions $\eta$ in $\mathcal{W}$, the forward-delay function $\gamma$ is specified by:
\begin{eqnarray}\label{eq:charGamma}
\forall \; \xi \geq \xi_0 \, , \quad \gamma(\xi) = \inf \{ \sigma > 0 \, \vert \, \sigma \geq \eta(\sigma+\xi)\} \geq \nu \epsilon \, .
\end{eqnarray}

$(ii)$ Given two backward-delay function $\eta_a$ and $\eta_b$ in $\mathcal{W}$ with $\eta_a \geq \eta_b$, their corresponding forward-delay functions $\gamma_a$ and $\gamma_b$
satisfy $\gamma_a \geq \gamma_b$.
\end{proposition}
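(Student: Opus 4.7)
My plan is to derive (i) by a direct change of variable in the defining formula for the forward function $\tau$, and then to obtain (ii) as an immediate consequence of the characterization from (i) combined with the elementary monotonicity of the infimum under set inclusion. The only mildly technical point, already hinted at in the text just after Definition \ref{def:Eta}, is to exploit the uniform bound $\eta \geq \nu \epsilon$, which forces the candidate set in (i) to stay bounded away from zero.

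For part (i), I would start from the defining relation $\tau(\xi) = \inf\{\sigma > 0 \, \vert \, \sigma - \eta(\sigma) \geq \xi\}$ from Proposition \ref{prop:tauDef} and substitute $\sigma = s + \xi$. Under this affine bijection, the constraint $\sigma > 0$ becomes $s > -\xi$, while the condition $\sigma - \eta(\sigma) \geq \xi$ reduces to $s \geq \eta(s + \xi)$. Using $\inf\{s + \xi \, \vert \, s \in S\} = \xi + \inf S$, this gives
\begin{eqnarray*}
\gamma(\xi) \; = \; \tau(\xi) - \xi \; = \; \inf \{ s > -\xi \, \vert \, s \geq \eta(s+\xi) \} \, .
\end{eqnarray*}
To recover the stated range $s > 0$, I would invoke the implicit domain constraint $s + \xi \geq 0$ (required for $\eta(s + \xi)$ to be defined), which together with $\eta \geq \nu \epsilon > 0$ forces every element of the candidate set to satisfy $s \geq \nu \epsilon$. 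Hence the range $s > -\xi$ can be replaced by $s > 0$ without affecting the infimum, and this simultaneously delivers the lower bound $\gamma(\xi) \geq \nu \epsilon$.

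For part (ii), the monotonicity is then immediate from (i). If $\eta_a \geq \eta_b$ pointwise, any $\sigma$ satisfying $\sigma \geq \eta_a(\sigma + \xi)$ automatically satisfies $\sigma \geq \eta_b(\sigma + \xi)$, so the candidate set defining $\gamma_a(\xi)$ is contained in the one defining $\gamma_b(\xi)$. Taking infima over these nested sets yields $\gamma_a(\xi) \geq \gamma_b(\xi)$. I expect no serious obstacle here: the only point requiring care is the equivalence of the ranges $s > -\xi$ and $s > 0$ for $\xi \in [\xi_0, 0)$, which is cleanly handled by the uniform lower bound on $\eta$.
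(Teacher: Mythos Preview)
Your proposal is correct and follows essentially the same route as the paper's own proof: both establish (i) by the affine substitution $\sigma \mapsto \sigma + \xi$ in the definition of $\tau$ from Proposition \ref{prop:tauDef}, then read off the lower bound from $\eta \geq \nu\epsilon$, and derive (ii) by the set-inclusion argument you describe. If anything, you are slightly more explicit than the paper about why the range $s > -\xi$ may be replaced by $s > 0$ (the paper silently passes from $\sigma > 0$ to $\sigma > \xi$ before shifting), but this is a matter of exposition rather than a different approach.
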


\begin{proof}
$(i)$ By definition of the forward function $\tau$ in \eqref{eq:tauDef}, for all $\xi \leq \xi_0$:
\begin{eqnarray*}
\gamma(\xi) 
=
 \tau(\xi) - \xi  
=
 \inf \{ \sigma > \xi \, \vert \, \sigma-\eta(\sigma) \geq \xi \} - \xi 
=
 \inf \{ \sigma > 0 \, \vert \, \sigma \geq \eta(\sigma+\xi)\}, .
\end{eqnarray*}
In particular, we have  $\gamma(\xi) \geq \inf_{\sigma \geq 0} \eta(\sigma)  \geq \nu \epsilon$.

$(ii)$ If $\eta_a \geq \eta_b$, we have $\{ \sigma > \, \vert \,\sigma \geq \eta_a(\sigma+\xi)  \} \subset \{ \sigma > \, \vert \,\sigma \geq \eta_b(\sigma+\xi)  \}$ so that by the characterization given in $(i)$, we have $\gamma_a \geq \gamma_b$.
\end{proof}

Notice that characterization \eqref{eq:charGamma} together with \eqref{eq:tauIneq} implies that $\Vert \eta \Vert_{0,\infty}=\Vert \gamma \Vert_{0,\infty}$.\\

It is now straightforward to exhibit the renewal character of the dynamics of $Y_\sigma$.
Unless stated otherwise, we assume the initial condition $p_{\sigma_0}=\delta_x$, $x>0$, for simplicity.
Given a backward-delay functions $\eta$ in $\mathcal{W}$, the sequences of times $\lbrace \xi_k \rbrace_{k \geq 0}$  and $\lbrace \tau_k \rbrace_{k \geq }$ are interwoven, i.e., $\xi_0<\tau_0=0<\xi_1 < \tau_1 <\xi_2 < \tau_2 \ldots$.
The refractory periods $\{ \tau_k-\xi_k \}_{k \geq 1}$ are determined by the forward-delay function $\tau_k-\xi_k=\gamma(\xi_k)$, which was precisely introduced to that end.
In between consecutive reset and inactivation times, the dynamics of the time-changed process $Y_\sigma$ is simply that of a Wiener process with unit negative drift.
Thus, the random variables $\{ \xi_{k+1}-\tau_{k} \}_{k \geq 0}$ are i.i.d. according to  $\Prob{\xi_{k+1}-\tau_{k} \leq \sigma}=H(\sigma,\Lambda)$ for all $k\geq1$ and  to $\Prob{ \tau_1 \leq \sigma \, \vert \, Y_0=x } =H(\sigma,x)$ otherwise, where $H$ denotes the first-passage cumulative distribution~\cite{Karatzas}
\begin{eqnarray*}
H(\sigma,x)  
= \frac{1}{2} 
\left( 
\mathrm{Erfc} \left( \frac{x-\sigma}{\sqrt{2 \sigma}}\right) + e^{2 x} \mathrm{Erfc} \left( \frac{x + \sigma}{\sqrt{2 \sigma}}\right) 
\right) \, .
\end{eqnarray*}
By convention, we set $H(\sigma,x)=0$ for $\sigma<0$, so that $H$ admits the density function
\begin{eqnarray*}
h(\sigma,x) = \partial_\sigma H(\sigma,x)  = \mathbbm{1}_{\{\sigma \geq 0 \}}\frac{xe^{-(x -\sigma)^2/2 \sigma}}{\sqrt{2 \pi \sigma^3}}  \, .
\end{eqnarray*}
In turn, the inter-inactivation epochs $\{ \xi_{k+1}-\xi_k \}_{k \geq 1}$ 
are independently distributed according to time-inhomogeneous distributions:
\begin{eqnarray*}
\Prob{\xi_{k+1} \leq \sigma \, \vert \, \xi_k}&=&\bar{H}_\Lambda(\sigma,\xi_k) = H \big(\sigma-\tau(\xi_k) , \Lambda \big)= H \big(\sigma-\xi_k-\gamma(\xi_k) , \Lambda \big) \, ,
\end{eqnarray*}
This shows that the sequence $\{  \xi_k \}_{k \geq 0}$ 
constitutes a time-inhomogenous renewal process.

Recognizing the renewal character of the time-changed dynamics $Y_\sigma$ suggests that its associated cumulative flux $G$ satisfies a renewal-type integral equation.
In order to establish this equation in the next section, we will need the following result, which shows that forward and backward functions $\xi$ and $\tau$ are well-behaved inverse functions of one another.

\begin{proposition}\label{prop:Peta}
Given a backward-delay function $\eta$ in $\mathcal{W}$,  for all $\sigma>0$, we have
\begin{eqnarray*}
\{ \xi > \xi_0 \, \vert \, \tau(\xi) = \xi+\gamma(\xi) \leq \sigma \} = \{ \xi > \xi_0 \, \vert \, \xi \leq \xi(\sigma) = \sigma-\eta(\sigma) \} \, .
\end{eqnarray*}
\end{proposition}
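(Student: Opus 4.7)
The statement is essentially the standard duality between the càdlàg nondecreasing function $\xi(\cdot) = \mathrm{id} - \eta$ and its left-continuous generalized inverse $\tau$. Concretely, we need to show
\begin{eqnarray*}
\tau(\xi) \leq \sigma \quad \Longleftrightarrow \quad \xi \leq \xi(\sigma) \, ,
\end{eqnarray*}
for all admissible $\xi > \xi_0$ and $\sigma > 0$. The plan is to prove the two implications separately, relying only on the monotonicity and right-continuity of $\xi(\cdot)$, both of which were established earlier via $\xi(\sigma) = \Phi(\Psi(\sigma) - \epsilon)$.

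For the inclusion $\{\xi \leq \xi(\sigma)\} \subset \{\tau(\xi) \leq \sigma\}$, I would start from the hypothesis $\xi \leq \xi(\sigma)$ and observe that $\sigma$ belongs to the set $A_\xi = \{s > 0 \, \vert \, \xi(s) \geq \xi\}$ whose infimum defines $\tau(\xi)$ in Proposition \ref{prop:tauDef}. The inequality $\tau(\xi) \leq \sigma$ then follows immediately.

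For the reverse inclusion $\{\tau(\xi) \leq \sigma\} \subset \{\xi \leq \xi(\sigma)\}$, I would use right-continuity of $\xi(\cdot)$ to show that the infimum $\tau(\xi) = \inf A_\xi$ is in fact attained, that is, $\xi(\tau(\xi)) \geq \xi$. Indeed, picking a decreasing sequence $s_n \downarrow \tau(\xi)$ with $s_n \in A_\xi$, the càdlàg property gives $\xi(\tau(\xi)) = \lim_n \xi(s_n) \geq \xi$. Monotonicity of $\xi(\cdot)$ then transmits the inequality from $\tau(\xi)$ to $\sigma$: since $\tau(\xi) \leq \sigma$, we obtain $\xi \leq \xi(\tau(\xi)) \leq \xi(\sigma)$.

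The main subtlety, and the only non-routine point, is the attainment $\xi(\tau(\xi)) \geq \xi$: it requires right-continuity at $\tau(\xi)$, which could otherwise fail at a jump discontinuity of $\xi(\cdot)$. Because $\xi(\cdot)$ inherits its càdlàg property from $\Phi \in \mathcal{T}$, this subtlety is resolved cleanly, and one should separately note the boundary case $\tau(\xi) = 0$, where $\xi(0)$ is defined as the càdlàg value at the left endpoint. No further structural assumption on $\eta$ is needed, so the identity holds uniformly for every $\eta$ in $\mathcal{W}$.
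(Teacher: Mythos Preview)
Your proof is correct and follows essentially the same route as the paper: both arguments reduce the set equality to the duality $\tau(\xi)\le\sigma \Leftrightarrow \xi\le\xi(\sigma)$ and hinge on the right-continuity of $\xi(\cdot)$ to show that the infimum defining $\tau(\xi)$ is attained. The paper phrases the reverse inclusion as a contradiction via an intersection characterization $\cap_{n\ge1}\{\exists\,\tau'\in[\sigma,\sigma+1/n]:\xi(\tau')\ge\xi'\}$ and then invokes $\lim_{\tau\to\sigma^+}\xi(\tau)=\xi(\sigma)$, whereas you go directly through $\xi(\tau(\xi))\ge\xi$ and monotonicity; these are the same argument in slightly different packaging.
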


\begin{proof}
In order to prove the proposed set identity, we use the following characterization:
\begin{eqnarray}\label{eq:charSet}
\{ \xi' > \xi_0 \, \vert \, \tau(\xi')  \leq \sigma \}
&=&
\{ \xi' > \xi_0 \, \vert \, \inf \{ \tau'> \xi' \, \vert \, \xi(\tau') \geq \xi' \} \leq \sigma \} \, , \\
&=&
 {\displaystyle \cap}_{n \geq 1}\left\{ \xi' > \xi_0 \, \vert \, \exists \,  \tau'> \xi' \, , \; \xi(\tau') \geq \xi' , \tau' \leq \sigma+1/n \right\} \, , \\
 &=&
 {\displaystyle \cap}_{n \geq 1}\left\{ \xi' > \xi_0 \, \vert \, \exists \,  \tau'> \xi' \, , \; \xi(\tau') \geq \xi' , \sigma \leq \tau' \leq \sigma+1/n \right\} \, , 
\end{eqnarray}
where the last equality follows from the fact that $\xi = \mathrm{id}-\eta$ is nondecreasing.

Consider $\xi' > \xi_0$ such that $\xi' \leq \xi(\sigma)$, then for all $n \geq 1$, take any $\tau'$ such that $\sigma \leq \tau' \leq \sigma+1/n$, we have $\xi' \leq \xi(\sigma) = \sigma-\eta(\sigma) < \sigma \leq \tau'$ and $\xi(\tau') \geq \xi(\sigma) \geq \xi'$.
Thus $ \{ \xi' > \xi_0 \, \vert \, \xi' \leq \xi(\sigma) \} \subset \{ \xi' > \xi_0 \, \vert \, \tau(\xi')  \leq \sigma \}$.

Reciprocally, consider $\xi' > \xi_0$ such that $\tau(\xi')  \leq \sigma$. Then by characterization \eqref{eq:charSet}, there is a sequence $\{ \tau_n \}_{n\geq 1}$ such that $\xi(\tau_n) \geq \xi'$ and $\sigma \leq \tau_n  \leq \sigma+1/n$.
Suppose that $\xi'> \xi(\sigma)$, we then have
\begin{eqnarray*}
\xi(\sigma)< \xi' \leq  \liminf_{n \to \infty} \xi(\tau_n) = \lim_{\tau \to \sigma^+} \xi(\tau) \, ,
\end{eqnarray*}
which contradicts the right continuity of $\xi$. 
Thus we must have $\xi' \leq \xi(\sigma)$, which means that $\{ \xi' > \xi_0 \, \vert \, \tau(\xi')  \leq \sigma \} \subset \{ \xi' > \xi_0 \, \vert \, \xi' \leq \xi(\sigma) \}$.
\end{proof}

A direct consequence of the above proposition is that $\Prob{\tau(\xi)  \leq \sigma } = \Prob{\xi \leq \sigma-\eta(\sigma)}$. This result will feature prominently in establishing a renewal-type equation for the cumulative flux function $G$.

\subsection{Quasi-renewal equation}

Here, our goal is to adapt elementary results from renewal analysis to characterize the cumulative flux $G$ as the unique (smooth) solution of a renewal-type equation.
For the elementary initial condition $q_0=\delta_x$,
this renewal-type equation can be deduced from the representation of $G$ as a probabilistic series
\begin{eqnarray}\label{eq:Ftx}
G( \sigma)  = \Exp{\sum_{k=1}^{\infty}  \mathbbm{1}_{ \left\{ \xi_k <  \sigma \right\} } \, \bigg \vert \, Y_0=x } = \sum_{k=1}^{\infty} \Prob{ \xi_k \leq  \sigma \, \vert \, Y_0=x}  \, .
\end{eqnarray}
Observe that by nonnegativity of forward-delay functions $\gamma$, each of the probabilities involved in the series is upper bounded by its counterpart in the compactly converging series representation without delay: $\Prob{ \xi_k \leq  \sigma \, \vert \, Y_0=x}\leq H(\sigma, x+(k-1)\Lambda)$.
This justifies the validity of the series representation \eqref{eq:Ftx}.
Moreover, by divisibility of the first-passage distribution, one can check that 
\begin{eqnarray*}
 \sum_{k=1}^\infty H(\sigma, x+(k-1)\Lambda) =  H(\sigma, x)  + \int H(\sigma-\tau, \Lambda) \sum_{k=1}^\infty H(\tau, x+(k-1)\Lambda) \, \dd \tau \, ,
\end{eqnarray*}
yielding the classical renewal integral equation satisfied by $G$ in the absence of delays. We extend  this result for nonzero delays in the following proposition:

\begin{proposition}\label{prop:Renewal}
Given a backward-delay function $\eta$ in $\mathcal{W}$, the cumulative flux function $G$ associated to the PDE problem \ref{def:qPDE} is the unique solution of the renewal-type equation:
\begin{eqnarray}\label{eq:DuHamel}
G(\sigma) = \int_0^\infty H(\sigma,x) q_0(x) \, \dd x  + \int_0^\sigma H(\sigma-\tau,\Lambda) \, \dd G(\tau-\eta(\tau))   \, .
\end{eqnarray}
\end{proposition}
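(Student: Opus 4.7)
The plan is to extend the classical renewal argument alluded to just before the proposition: writing $G$ as an expected inactivation-count series and using the strong Markov property at reset times to obtain a convolution-type decomposition, then applying Proposition~\ref{prop:Peta} to reexpress the resulting integral in the $\tau$-coordinate.

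First, I would start from $G(\sigma) = \sum_{k \geq 1} \Prob{\xi_k \leq \sigma}$, extended by $G = G_0$ on $[\xi_0, 0)$ to encode the initially inactive mass. By the strong Markov property, the first-passage time $\xi_1$ of an initially active trajectory $Y_0 \sim q_0$ has distribution $H(\sigma, x)$, producing the leading term $\int_0^\infty H(\sigma, x) q_0(x)\, \dd x$. For $k \geq 2$, conditioning on the reset time $\tau(\xi_{k-1})$ and using that $\xi_k - \tau(\xi_{k-1}) \sim H(\cdot, \Lambda)$ independently of the past yields $\Prob{\xi_k \leq \sigma \, \vert \, \xi_{k-1}} = H(\sigma - \tau(\xi_{k-1}), \Lambda)$. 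Summing over $k$ and exchanging with expectation gives
\[
\sum_{k \geq 2} \Prob{\xi_k \leq \sigma} \;=\; \Exp{\sum_{k \geq 1} H(\sigma - \tau(\xi_k), \Lambda)} \;=\; \int_{\xi_0}^\infty H(\sigma - \tau(\xi), \Lambda)\, \dd G(\xi),
\]
where the range $[\xi_0, 0)$ absorbs the contributions of the initially inactive mass $g_0$ via the convention $G=G_0$.

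Second, I would convert the $\xi$-integral above into the stated $\tau$-integral via the change of variable $\tau = \tau(\xi)$, where $\tau$ is the left-continuous generalized inverse of $\xi = \mathrm{id} - \eta$. Since $H(\sigma - \tau(\xi), \Lambda) = 0$ whenever $\tau(\xi) > \sigma$, the set identity of Proposition~\ref{prop:Peta} restricts integration to $[\xi_0, \sigma - \eta(\sigma)]$ and identifies the pushforward of $\dd G$ under $\tau \mapsto \tau - \eta(\tau)$ with $\dd G(\tau - \eta(\tau))$ on $[0, \sigma]$. Combining with the $q_0$-term produces \eqref{eq:DuHamel}.

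Uniqueness follows from viewing \eqref{eq:DuHamel} as a fixed-point equation $G = \mathcal{R}[G]$ on the space of bounded nondecreasing \emph{c\`adl\`ag} functions on $[0, S]$. Because $\eta \geq \nu \epsilon > 0$, the operator $\mathcal{R}$ only probes $G$ at arguments $\tau - \eta(\tau) \leq \tau - \nu \epsilon$, so iterating $\mathcal{R}$ from any two candidates agrees after at most $\lceil S/(\nu \epsilon) \rceil$ steps, forcing equality. The main obstacle is the rigorous justification of the change-of-variable step when $\eta$ admits both jump discontinuities (which make $\xi = \mathrm{id} - \eta$ drop) and flat regions (which make the inverse $\tau$ jump): these singular contributions must be tracked through the left-continuity of $\tau$ and the right-continuity of $\xi$, and flat plateaus of $\xi$ contribute nothing because $H(\sigma - \tau(\cdot), \Lambda)$ is constant on the preimage of any such plateau. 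Proposition~\ref{prop:Peta} provides exactly the combinatorial identity needed to reconcile these cases.
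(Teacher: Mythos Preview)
Your existence argument follows essentially the same route as the paper: series representation of $G$, conditioning on reset times, and Proposition~\ref{prop:Peta}. The paper's execution is slightly cleaner, though: rather than writing the sum as a $\xi$-integral $\int_{\xi_0}^\infty H(\sigma-\tau(\xi),\Lambda)\,\dd G(\xi)$ and then changing variables to $\tau$ (the step you correctly flag as delicate when $\xi$ has jumps and plateaus), the paper writes it directly as a $\tau$-integral against the integrator $\sum_{k\geq 1}\dd\Prob{\tau_k\leq\tau}$, and then applies Proposition~\ref{prop:Peta} termwise to identify $\sum_{k\geq 1}\Prob{\tau_k\leq\tau}=G(\tau-\eta(\tau))$. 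This bypasses the change-of-variable justification entirely.

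Your uniqueness argument is genuinely different from the paper's and arguably more elementary. The paper differentiates \eqref{eq:DuHamel}, bounds $\vert g_1(\sigma)-g_2(\sigma)\vert$ by $\sigma\,\Vert h(\cdot,\Lambda)\Vert_\infty\,\sup_{[0,\sigma]}\vert g_1-g_2\vert$, and concludes by a local contraction plus continuation. You instead exploit $\eta\geq\nu\epsilon$ to observe that $\mathcal{R}[G]\vert_{[0,\sigma]}$ depends only on $G\vert_{[\xi_0,\sigma-\nu\epsilon]}$, so the solution is built deterministically in strips of width $\nu\epsilon$. Your argument is shorter and uses the delay structure that the paper's contraction does not; on the other hand, the paper's bound would survive even without a uniform lower bound on $\eta$, since it relies only on the vanishing of $h(\sigma,\Lambda)$ as $\sigma\to 0^+$.
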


\begin{proof}
It is enough to show the result for the elementary initial condition $q_0=\delta_x$.
Let us consider $\xi_{k+1}$, the $(k+1)$-th inactivation time of $Y_\sigma$, which is necessarily preceded by the $k$-th reset time: $\xi_{k+1} > \tau_k > 0$.
Conditioning on $\tau_k$ for $k\leq 1$ yields
\begin{eqnarray*}
 \Prob{ \xi_{k+1} \leq  \sigma \, \vert \, Y_0=x} 
 = 
 \Exp{\Prob{\xi_{k+1} \leq \sigma \, \vert \, \tau_k } \, \vert \, Y_0=x}
  = 
 \Exp{H(\sigma - \tau_k, \Lambda)\, \vert \, Y_0=x} \, ,
\end{eqnarray*}
where the last equality uses the fact that $\{ \xi_{k+1}-\tau_k \}_{k \geq 1}$ are i.i.d. according to the distribution $H(\cdot, \Lambda)$.
Thus, singling out the first term, the series representation of $G$ given in \eqref{eq:Ftx} reads 
\begin{eqnarray}
G( \sigma )  
&=&
\Prob{ \xi_1 \leq  \sigma \, \vert \, Y_0=x}
+
\sum_{k=2}^{\infty}   \Exp{H(\sigma - \tau_{k-1}, \Lambda)\, \vert \, Y_0=x} \, , \nonumber\\
&=&
H(\sigma,x)\label{eq:Gsx}
+  \int_0^\sigma H(\sigma - \tau, \Lambda) \,\sum_{k=1}^{\infty}  \dd \Prob{\tau_k \leq \tau \, \vert \, Y_0=x}  \, .  
\end{eqnarray}
We conclude by  expressing the series appearing as an integrator function above in terms of $G$.
Invoking Proposition \ref{prop:Peta}, we have the compact convergence
\begin{eqnarray*}
\sum_{k=1}^{\infty} \Prob{ \tau_k \leq  \sigma \, \vert \, Y_0=x}
=
 \sum_{k=1}^{\infty} \Prob{ \xi_k \leq  \sigma -\eta(\sigma) \, \vert \, Y_0=x} 
 =
 G(\sigma-\eta(\sigma)) \, ,
\end{eqnarray*}
which leads to \eqref{eq:DuHamel} upon substitution in \eqref{eq:Gsx}.

To show uniqueness, suppose $G_1$ and $G_2$ both solves \eqref{eq:DuHamel}. 
Then, $G_1$ and $G_2$ are necessarily smooth functions with derivatives $g_1$ and $g_2$ satisfying 
\begin{eqnarray*}
g_1(\sigma)-g_2(\sigma)
= \int_0^\sigma  h(\sigma-\tau,\Lambda) \dd \big[ G_1(\xi(\tau)) -G_2(\xi(\tau))) \, .
\end{eqnarray*}
Introducing the function $\tau(\xi)$ allows on to perform the generalized change of variable $\xi=\xi(\tau)$.
For small enough $\sigma >0$, such a change of variable yields:
 \begin{eqnarray*}
\vert g_1(\sigma)-g_2(\sigma) \vert
&=& \bigg \vert \int_0^\sigma  h(\sigma-\tau(\xi),\Lambda) \dd \big[ G_1(\xi) -G_2(\xi)) \bigg \vert \, , \\
&\leq& \int_0^\sigma  h(\sigma-\tau(\xi),\Lambda) \big \vert g_1(\xi) -g_2(\xi)) \big \vert \, \dd \xi \, , \\
&\leq&  \left( \int_0^\sigma  h(\sigma-\tau(\xi),\Lambda)  \dd \xi \right)  \sup_{0 \leq \xi \leq \sigma} \vert g_1(\xi)-g_2(\xi) \vert  \, , \\
&\leq& \sigma \Vert h(\cdot,\Lambda) \Vert_\infty \vert g_1(\xi)-g_2(\xi) \vert  \, , 
\end{eqnarray*}
where $\Vert h(\cdot,\Lambda) \Vert_\infty$ denotes the finite infinity norm of $\sigma \mapsto h(\sigma,\Lambda)$.
This inequality establishes that $g_1=g_2$ on any interval $[0,\sigma)$ with $\sigma< 1/\Vert h(\cdot,\Lambda) \Vert_\infty$.
This local uniqueness result transfers to $G_1$ and $G_2$ by virtue of $G_1(0)=G_2(0)=0$.
Finally, global uniqueness can be recovered by standard methods of continuation.
\end{proof}

The above result makes it clear that as a solution to \eqref{eq:DuHamel}, the cumulative flux function $G$ inherits all the regularity properties of $\sigma \mapsto H( \sigma, \Lambda)$, i.e., $G$ is a smooth function for $\sigma>0$.
Moreover, with $G$ specified as a solution to \eqref{eq:DuHamel}, the full solution of the inhomogeneous PDE \eqref{eq:qPDE} can be expressed in terms of the corresponding homogeneous solutions by Duhamel's principle.
These homogeneous solutions are known in closed form~\cite{Karatzas}:
\begin{eqnarray*}
\kappa(\sigma,y,x) = \frac{e^{-\frac{(y - x +  \sigma)^2}{2\sigma}}}{\sqrt{2 \pi \sigma}}  \left(1-e^{ -\frac{2 x y}{\sigma}} \right) \, .
\end{eqnarray*}
Thus, Proposition \ref{prop:Renewal} admits the following corollary:

\begin{corollary}
Given a backward-delay function $\eta$ in $\mathcal{W}$ and normalized initial conditions $(q_0, g_0)$  in $\mathcal{M}(\mathbbm{R}^+) \times \mathcal{M}([\xi_0,0))$, there is a unique solution density function $(\sigma,x) \mapsto q(\sigma,x)=\dd \Prob{0<Y_\sigma \leq x} / \dd x$  to the PDE problem \ref{def:qPDE} which admits the integral representation
\begin{eqnarray*}
q(\sigma,y) = \int_0^\infty \kappa(\sigma,y,x) q_0(x) \, \dd x  + \int_0^\sigma \kappa(\sigma-\tau,y,\Lambda) \, \dd G(\tau-\eta(\tau))   \, ,
\end{eqnarray*}
where $G$ is the solution to the renewal-type equation \eqref{eq:DuHamel}. 
\end{corollary}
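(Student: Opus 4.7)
The strategy is to combine Proposition \ref{prop:Renewal} with Duhamel's principle applied to the linear inhomogeneous PDE \eqref{eq:qPDE} with $G$ regarded as an already-known datum. First, I would invoke Proposition \ref{prop:Renewal} to obtain the unique smooth cumulative flux $G$ associated with the prescribed $\eta$ and the initial data $(q_0,g_0)$, and then freeze it. With $G$ fixed, the Dirac source term in \eqref{eq:qPDE} becomes a prescribed Stieltjes measure $\dd G(\sigma-\eta(\sigma))\,\delta_\Lambda$, so the PDE problem \ref{def:qPDE} reduces to a linear heat equation with constant unit negative drift, absorbing boundary at the origin, and a known source concentrated at $x=\Lambda$.

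Next, I would note that $\kappa(\sigma,y,x)$ is the fundamental solution of this absorbed drifted heat operator started from the Dirac mass $\delta_x$. Duhamel's principle then directly produces the announced formula: the initial data contributes $\int_0^\infty\kappa(\sigma,y,x)q_0(x)\,\dd x$, while the source contributes $\int_0^\sigma\kappa(\sigma-\tau,y,\Lambda)\,\dd G(\tau-\eta(\tau))$. The Stieltjes integrator is unambiguous because $\tau\mapsto G(\tau-\eta(\tau))$ is the nondecreasing c\`adl\`ag composition of a smooth increasing $G$ with the nondecreasing c\`adl\`ag function $\mathrm{id}-\eta$. I would then check the two defining conditions in \eqref{eq:abscons}: the absorbing condition $q(\sigma,0)=0$ is automatic from $\kappa(\sigma,0,x)=0$; the flux conservation identity follows from the relation $\partial_x\kappa(\sigma,0,x)/2=h(\sigma,x)$, which yields
\begin{eqnarray*}
\partial_x q(\sigma,0)/2 = \int_0^\infty h(\sigma,x)q_0(x)\,\dd x + \int_0^\sigma h(\sigma-\tau,\Lambda)\,\dd G(\tau-\eta(\tau)),
\end{eqnarray*}
which is precisely $g(\sigma)=G'(\sigma)$ once one formally differentiates the renewal equation \eqref{eq:DuHamel}. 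For uniqueness, if $q_1$ and $q_2$ both solved the PDE problem \ref{def:qPDE}, their boundary fluxes $g_i=\partial_x q_i(\sigma,0)/2$ would integrate into cumulative fluxes $G_i$ that both satisfy the quasi-renewal equation \eqref{eq:DuHamel} via the probabilistic derivation recapped in the proof of Proposition \ref{prop:Renewal}; uniqueness in that proposition forces $G_1=G_2$, so the Duhamel formula forces $q_1=q_2$.

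The main obstacle is the justification of the boundary-flux identity when $\eta$ has jumps or flat regions, since one must differentiate under the Stieltjes integral in presence of the singular kernel $h(\sigma-\tau,\Lambda)$ at $\tau=\sigma$. I would handle this by splitting $[0,\sigma]=[0,\sigma-\delta]\cup[\sigma-\delta,\sigma]$, using the smoothness of $G$ from Proposition \ref{prop:Renewal} to control the near-diagonal piece through the explicit Gaussian decay of $h(\cdot,\Lambda)$, and then passing to the limit $\delta\to 0$ while exploiting the boundedness of $\Vert h(\cdot,\Lambda)\Vert_\infty$. Once this technical step is in place, the rest of the argument is purely formal and reduces the corollary to an assembly of Proposition \ref{prop:Renewal}, the explicit expression for $\kappa$, and Duhamel's representation.
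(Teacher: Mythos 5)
Your proposal is correct and follows exactly the route the paper intends: the paper presents this corollary without a separate proof, as a direct consequence of Proposition \ref{prop:Renewal} combined with Duhamel's principle for the absorbed drifted heat kernel $\kappa$, once the cumulative flux $G$ is uniquely pinned down. You fill in the same outline in more detail—checking $\kappa(\sigma,0,x)=0$ for the absorbing condition, using $\partial_y\kappa(\sigma,0,x)/2=h(\sigma,x)$ to recover the flux identity $\partial_x q(\sigma,0)/2=g(\sigma)$ by differentiating \eqref{eq:DuHamel}, and sketching the uniqueness transfer from $G$ to $q$—and you correctly identify the only nontrivial technical point (differentiation under the Stieltjes integral with the singular first-passage kernel at the diagonal), handling it by a standard $\delta$-splitting exploiting the smoothness of $G$ and the boundedness of $h(\cdot,\Lambda)$ away from zero.
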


The above renewal analysis has allowed us to justify the existence, uniqueness, and regularity of time-changed dynamics assuming the backward-delay function $\eta$ known.
However, $\eta$ is actually an unknown of the problem for being ultimately defined in term of the time change $\Phi$ via Definition \ref{def:Eta}.
We devote the next section to exhibiting under which conditions $\eta[\Phi]$ parametrizes an admissible dPMF dynamics.

\section{Fixed-point problem}\label{sec:fixedPoint}

In this section, we establish our main result, Theorem \ref{th:fixedpoint_int}, by establishing that dPMF dynamics are equivalent to certain constitutively well-posed time-changed dynamics.
To do so, for any candidate time change $\Phi$ in $\mathcal{T}$, we consider the time-changed dynamics $Y_\sigma$ that is uniquely defined by the backward delay function $\eta[\Phi]$ in $\mathcal{W}$.
Then, we look for possibly explosive, weak solutions to our original PDE problem \ref{def:delayedPDE} under the form $(t,x) \mapsto q[\Phi](\Phi(t),\sigma)$, where $q[\Phi]$ is the density function of the process $Y_\sigma$.
This leads to exhibiting a natural condition on $\Phi$ to parametrize such a weak solution, as stated in the following propositon:

\begin{proposition}\label{prop:qPDE}
Given a time change $\Phi$ in $\mathcal{T}$, let $q$ be the solution of the time-changed PDE problem \ref{def:qPDE} associated with $\eta[\Phi]$ in $\mathcal{W}$ on $\mathbbm{R}^+ \times \mathbbm{R}^+$.
Then, $p(t,x)=q(\Phi(t),x)$ with $\Phi$ a valid time change in $\mathcal{T}$, weakly solves the PDE problem \ref{def:delayedPDE} on $\mathbbm{R}^+ \times \mathbbm{R}^+$ if and only if 
\begin{eqnarray}\label{eq:condEq}
\forall \; t \geq 0 \, , \quad \Phi(t)=\nu t+\lambda G(\Phi(t)) \, .
\end{eqnarray}
In particular, the cumulative flux function of the dPMF dynamics is given by $F=G \circ \Phi$.
\end{proposition}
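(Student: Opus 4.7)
The plan is to substitute the ansatz $p(t,x) = q(\Phi(t),x)$ with trial cumulative flux $F = G \circ \Phi$ into the weak formulation of Definition \ref{def:weakPDE} and identify the condition on $\Phi$ that makes the resulting identity hold for every test function. The proposition then follows because, for this choice of $F$, the weak equation for $p$ will reduce to \eqref{eq:condEq}, and the uniqueness of $F$ given $p$ established in Proposition \ref{prop:uniqueF} forces $F = G \circ \Phi$ \emph{a posteriori}.

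First I would derive, by straightforward integration by parts, the natural weak formulation of the time-changed problem \ref{def:qPDE}: for every smooth, compactly supported test function $v$ on $[0,S) \times \mathbbm{R}$,
\begin{eqnarray*}
\int_{0}^\infty \big[ q(S,x)v(S,x)-q(0,x)v(0,x) \big] \dd x
&=&
\int_0^S \! \int_{0}^\infty  q(\sigma,x) \big[ \mathcal{L}^\dagger [v] + \partial_\sigma v \big] \dd x \, \dd \sigma \\
&& + \int_0^S v(\sigma,\Lambda) \, \dd G(\sigma -\eta(\sigma)) \\
&& - \int_0^S v(\sigma,0) \, \dd G(\sigma),
\end{eqnarray*}
using only the absorbing condition $q(\sigma,0)=0$ and the flux identity $\partial_\sigma G = \partial_x q(\cdot,0)/2$. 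Since $G$ is smooth by Proposition \ref{prop:Renewal}, every term is unambiguous, and this identity is the target that the weak equation for $p$ must hit after the pushforward $\sigma = \Phi(t)$.

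Next, given a test function $u$ for the $p$-problem, I would set $v(\sigma,x) = u(\Psi(\sigma),x)$ and transform every $t$-integral appearing in Definition \ref{def:weakPDE} into a $\sigma$-integral using the substitution formula for c\`adl\`ag monotone changes of variable from \cite{Falkner:2012}. The key computation is that the combined measure $\nu \, \dd t + \lambda \, \dd F(t)$ pushes forward to $\dd \sigma$ under $\sigma = \Phi(t)$ precisely when $\dd \Phi = \nu \, \dd \mathrm{id} + \lambda \, \dd(G\circ\Phi)$, which together with $\Phi(0)=0$ is exactly \eqref{eq:condEq}. Granted this equation, the time-shifted flux $\dd F(t-\epsilon)$ becomes $\dd G(\sigma-\eta(\sigma))$ via the defining identity $\sigma - \eta(\sigma) = \Phi(\Psi(\sigma)-\epsilon)$, and the boundary flux $\dd F(t)$ becomes $\dd G(\sigma)$, so the boundary contributions of the two weak formulations align term by term.

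The main obstacle will be the bulk interaction term
\begin{eqnarray*}
\lambda \int_0^T \! \int_0^\infty p(t^-,x)\, \mathcal{L}^\dagger [u](t,x) \, \dd x \, \dd F(t),
\end{eqnarray*}
which pairs a left-continuous integrand with the atoms of $\dd F$ encoding synchronous events. Under $\sigma = \Phi(t)$, each jump $t^\star$ of $\Phi$ corresponds to a genuine interval $[\Phi(t^{\star -}),\Phi(t^\star)]$ in $\sigma$-space over which $q(\sigma,x)$ evolves continuously while $t$ stays frozen at $t^\star$; the pushforward of $p(t^-,x)\,\dd F(t)$ must fill in these phantom intervals correctly. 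To do this, I plan to invoke the Vol'pert superposition principle \cite{Volpert:1967,DalMaso:1991}, which provides the appropriate BV chain rule for the product of a left-continuous BV function with an atomic measure and produces precisely the absolutely continuous contribution $\lambda \int q(\sigma,x)\, \mathcal{L}^\dagger[v]\, \dd x\, \dd \sigma$ expected from the time-changed weak formulation over the phantom intervals. Once this step is carried out, matching the two weak formulations term by term yields the stated equivalence, with $F = G \circ \Phi$ recovered from the uniqueness statement of Proposition \ref{prop:uniqueF}.
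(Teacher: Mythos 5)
Your overall strategy is the same as the paper's: substitute the ansatz $p = q \circ \Phi$ into the weak formulation, change variables between $t$ and $\sigma$, and match terms. Two aspects of your proposal differ from the paper's execution, and both introduce genuine gaps.

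\emph{Direction of substitution.} You start from a smooth test function $u$ for the $p$-problem and set $v(\sigma,x) = u(\Psi(\sigma),x)$. But $\Psi$ is only Lipschitz (and constant on the intervals $[\Phi(t^{\star-}),\Phi(t^{\star})]$ where $\Phi$ jumps), so $v$ is merely Lipschitz in $\sigma$ and not an admissible test function for the weak formulation of Definition \ref{def:qPDE}, which requires smoothness. You would need an additional density/approximation argument to extend that weak formulation to Lipschitz $v$. The paper avoids this by going the other way: it starts from a smooth $v$ in $\sigma$ and defines $u(t,x) = v(\Phi(t),x)$, which is a \emph{c\`adl\`ag} BV function of $t$. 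The price paid there is that $\partial_t u$ must be evaluated via a BV chain rule, which is precisely what the Vol'pert superposition principle supplies; after restricting to those $v$ that are constant (with matching $\sigma$-derivative) on the jump intervals of $\Phi$, $u$ becomes $C^1$ and one reaches \eqref{eq:uWeak} for all smooth $u$. This also corrects your description of Vol'pert: it is a chain rule for the derivative of a composite $t\mapsto v(\Phi(t),x)$ with BV inner function, not a device for pairing a left-continuous integrand with the atoms of $\dd F$. In your direction the only tool needed for the Stieltjes terms is the Falkner--Teschl substitution formula.

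\emph{Converse direction.} Your sketch stops at ``matching the two weak formulations term by term,'' but the ``only if'' implication is the delicate part and does not follow from a formal term-by-term comparison. The paper compares \eqref{eq:uWeak} against \eqref{eq:weakPDE} for the unknown $F$: specializing to $u \equiv 1$ and using the $\epsilon$-periodicity argument of Proposition \ref{prop:uniqueF} first yields $F = G\circ\Phi$; only then, after subtracting the two identities with this common $F$ and specializing to $u(t,x) = x\,w(t)$, does one obtain the scalar identity $\int_0^T (1-r(t))\,w(t)\,\dd[\nu t + \lambda F(t) - \Phi(t)] = 0$, which forces \eqref{eq:condEq} because the refractory probability $r(t)$ stays below $1$. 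Your plan to invoke Proposition \ref{prop:uniqueF} ``a posteriori'' is circular as stated: to apply the uniqueness you must first exhibit $G\circ\Phi$ as an admissible cumulative flux for $p$, and this is precisely what the $u\equiv 1$ step accomplishes. Filling in these two points would make your reversed-substitution route a legitimate alternative, but as written it is incomplete.
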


\begin{proof}

We proceed in two steps: $(i)$ we characterize the time-changed dynamics associated with $\eta[\Phi]$ as a solution to a weak PDE problem, $(ii)$ we show that this weak formulation is equivalent to that of Definition \ref{def:weakPDE} if and only if the proposed criterion holds.

$(i)$ As a solution to the PDE problem \ref{def:qPDE}, $q$ is also a weak solution in the usual sense:
for all $S>0$ and for all test functions $v$ in $C_{0,\infty}([-\xi_0,S)\times \mathbbm{R})$, we have
\begin{eqnarray*}
\int_0^S \int_0^\infty \left[  \mathcal{L}[q](\sigma,x) + \frac{\dd}{\dd \sigma} [G(\sigma-\eta(\sigma))] \delta_{\Lambda} - \partial_\sigma q(\sigma,x) \right] v(\sigma,x) \, \dd \sigma \dd x \, .
\end{eqnarray*}
In turn, performing integration by parts in the distributional sense yields
\begin{eqnarray*}
\int_{0}^\infty \big[ q(S,x)v(S,x)-q(0,x)v(0,x) \big] \, \dd x 
&=& \\
&&\hspace{-140pt} \int_0^S\int_{0}^\infty  q(\sigma,x) \big[\mathcal{L}^\dagger[v](\sigma,x)+ \partial_\sigma v(\sigma,x) \big] \, \dd x \,  \dd \sigma \nonumber\\
&& \hspace{-140pt}+ \int_0^S v(\sigma,\Lambda)  \, \dd G(\sigma-\eta(\sigma))  - \int_0^S   v(\sigma,0) \, \dd G(\sigma) \, .
\end{eqnarray*}
Our goal is to perform the change of variable $\sigma=\Phi(t)$ to recover the weak form of solutions from Definition \ref{def:weakPDE}.
Given any nondecreasing function $M$ and $N$ with $N$ right-continuous, for any bounded Borel measurable function $f$, the substitution formula reads 
 \cite{Falkner:2012}
\begin{eqnarray*}
\int_a^b f(x)\, \dd (N\circ M)(x) = \int_{M(a)}^{M(b)} f(X(y))\, \dd N(y) \, ,
\end{eqnarray*}
where $X$ is the left-continuous generalized inverse of $M$: $X(y)=\inf \{ x \in [a,b] \vert y \leq M(x) \}$.
Moreover, if $N$ is continuous, $X$ can be any generalized inverse of $M$.
Thus, we have 
\begin{eqnarray*}
\int_0^S v(\sigma,\Lambda) \, \dd G(\sigma-\eta(\sigma)) 
&=& \int_0^S v(\sigma,\Lambda) \, \dd (G \circ \Phi)(\Psi(\sigma)-\epsilon) \, , \\
&=& \int_0^S v(\Phi(t^-),\Lambda) \, \dd (G \circ \Phi)(t-\epsilon) \, .
\end{eqnarray*}
Therefore, defining $u(t,x)=v(\Phi(t),x)$ and $T=\Psi(S)$, we have
\begin{eqnarray*}
\int_{0}^\infty \big[ q(\Phi(T),x)u(T,x)-q(\Phi(0),x)u(0,x) \big] \, \dd x 
&=& \\
&& \hspace{-240pt} \int_0^T \int_{0}^\infty  q(\Phi(t^-),x) \big[\mathcal{L}^\dagger[u](t^-,x)+ \partial_\sigma v(\Phi(t^-),x) \big] \, \dd x \,   \dd \Phi(t) \nonumber\\
&& \hspace{-240pt}+ \int_0^T u(t^-,\Lambda)  \, \dd (G \circ \Phi)(t-\epsilon)  - \int_0^T    u(t^-,0) \, \dd (G \circ \Phi)(t) \, .
\end{eqnarray*}
Given a smooth test function $v$, for all $x\geq 0$, the function $t \mapsto u(t,x)=v(\Phi(t),x)$ is a function with bounded variation.
Accordingly, we can apply the generalized chain rule involving Vol'pert superposition principle \cite{Volpert:1967,DalMaso:1991} to obtain
\begin{eqnarray*}
\frac{\dd u}{\dd t}(\cdot ,x)=  \frac{\dd}{\dd t}[v(\Phi(\cdot),x)] = \partial_\sigma\hat{v}(\Phi,x)(\cdot) \frac{\dd \Phi}{\dd t} \, ,
\end{eqnarray*}
where  $\partial_\sigma\hat{v}$ denotes the average superposition of $\partial_\sigma v$:
\begin{eqnarray*}
\partial_\sigma \hat{v}(\Phi,x)(t)=\int_0^1 \partial_\sigma v\big(\Phi(t^-)+z(\Phi(t)-\Phi(t^-))\big) \, \dd z \, .
\end{eqnarray*}
Now, we can always restrict our choice of test functions $v$ to these functions in $C_{0,\infty}([-\epsilon,\infty)\times \mathbbm{R})$ such that for all $t$ in the discontinuity set $\mathcal{D}_{\Phi}$, we have 
\begin{eqnarray*}
v(\Phi(t^-))=v(\Phi(t)) \quad \mathrm{and} \quad \partial_\sigma v(\Phi(t^-))=\partial_\sigma v(\Phi(t)) = \partial_\sigma \hat{v}(\Phi,x)(t) \, ,
\end{eqnarray*}
so that $t \mapsto u(x,t)$ is continuously differentiable. This means in particular that for all test functions $u$ in $C_{0,\infty}([-\epsilon,S)\times \mathbbm{R})$, we have
\begin{eqnarray}\label{eq:uWeak}
\int_{0}^\infty \big[ q(\Phi(T),x)u(T,x)-q(\Phi(0),x)u(0,x) \big] \, \dd x 
&=& \\
&& \hspace{-240pt} \int_0^T \int_{0}^\infty  q(\Phi(t^-),x) \mathcal{L}^\dagger[u](t,x) \, \dd \Phi(t)  + \int_0^T \int_{0}^\infty q(\Phi(t^-),x) \partial_t u(t,x) \, \dd x \,  \dd t \nonumber\\
&& \hspace{-240pt}+ \int_0^T u(t,\Lambda)  \, \dd (G \circ \Phi)(t-\epsilon)  - \int_0^T    u(t,0)\, \dd (G \circ \Phi)(t)  \nonumber\, .
\end{eqnarray}

$(ii)$ Let us now prove the proposition.
If $\Phi(t)=\nu t+\lambda G(\Phi(t))$, we directly see that substituting $ \dd \Phi(t)=\nu \, \dd t + \lambda \, \dd (G\circ \Phi)(t)$ in the above equation shows that $(t,x) \mapsto p(t,x)=q(\Phi(t),x)$ is a weak solution of PDE problem \ref{def:delayedPDE} for $F=G \circ \Phi$.

Reciprocally, let  $(t,x) \mapsto  p(t,x)=q(\Phi(t),x) $ be a weak solution of the PDE problem \ref{def:delayedPDE}.
Specifying  the defining property of $p$ as a weak solution for $F$ with $u(x,t)=1$ yields 
\begin{eqnarray*}
\int_0^T \int_{0}^\infty \partial_t p(t,x) u(t)\, \dd x \, \dd t = \int_0^T u(t)  \, \dd F(t-\epsilon)  - \int_0^T   u(t) dF(t) \, ,
\end{eqnarray*}
whereas specifying \eqref{eq:uWeak} for $u(x,t)=1$ yields an alternative expression for the same quantity
\begin{eqnarray*}
\int_0^T \int_{0}^\infty \partial_t q(\Phi(t),x) u(t)\, \dd x \, \dd t = \int_0^T u(t)  \, \dd (G \circ \Phi)(t-\epsilon)  - \int_0^T   u(t) \, \dd (G \circ \Phi)(t) \, .
\end{eqnarray*}
Moreover, for $\Psi$ being the right-continuous generalized inverse of the strictly increasing function $\Phi,$ we have $\Psi \circ \Phi=\mathrm{id}$  on $\mathbbm{R}^+$  and the initial condition $G=F\circ \Psi$ on $(-\eta(\epsilon),0]$ implies that $G \circ \Phi=F$ on $[-\epsilon,0)$.
Then, by the same reasoning as in Proposition \ref{prop:uniqueF}, $F(t)=(G \circ \Phi)(t)$ for all $t \geq 0$.
In turn, subtracting \eqref{eq:uWeak} with $q(\Phi(t),x)$ identified to $p(t,x)$ from equation \eqref{eq:weakPDE}  with $F$ identified to $G \circ \Phi$ yields
\begin{eqnarray*}
\int_0^T \int_{0}^\infty  p(t^-,x) \mathcal{L}^\dagger[u](t,x)\, \dd x \, \dd \Phi(t) 
=
\int_0^T \int_{0}^\infty  p(t^-,x) \mathcal{L}^\dagger[u](t,x)\, \dd x \,   \dd \big[\nu t +\lambda F(t)\big] \, .
\end{eqnarray*}
It is clear that the above equality holds for all smooth  functions such that $\mathcal{L}^\dagger[u]$ remains bounded on $[0,T)\times (0,\infty)$.
This observation allows us to  specify the above equation for test functions of the form $u(x,t)=x w(t)$ with $w$ in $C_{0,\infty}([0,T))$, so that we obtain
\begin{eqnarray*}
\int_0^T(1-r(t)) w(t) \, \dd \big[\nu t +\lambda F(t)-\Phi(t) \big] = 0 \, ,
\end{eqnarray*}
where $r(t)$ is the probability of being in the refractory state at $t$.
As for all locally bounded backward-delay functions $\eta$, $r(t)>0$ remains bounded away from one, and both $\Phi$ and $F$ are \emph{c\`adl\`ag} functions, this implies that 
\begin{eqnarray*}
\forall t \geq 0 \, , \quad \Phi(t) = \nu t +\lambda F(t) = \nu t+\lambda G(\Phi(t)) \, ,
\end{eqnarray*}
which concludes the proof.
\end{proof}

Proposition \ref{prop:qPDE} shows that the existence and uniqueness of dPMF dynamics amounts to the existence and uniqueness of a time change $\Phi$ in $\mathcal{T}$ solving \eqref{eq:condEq}.
In this light, \eqref{eq:condEq} appears as a self-consistent relation rather than a mere definition as in \eqref{eq:Phi}.
In fact, \eqref{eq:condEq} defines a fixed-point problem satisfied by these time changes $\Phi$ that parametrized dPMF dynamics.
The fixed-point nature of \eqref{eq:condEq} follows from the fact that the cumulative flux function $G$ involved in \eqref{eq:condEq} is functionally dependent on $\Phi$ via $\eta$.

To account for blowups, solutions $\Phi$ to  \eqref{eq:condEq} are allowed to be discontinuous, which leads to possible degeneracy issues.
Indeed, as the cumulative flux $G$ must be smooth, $\Phi$ can only become discontinuous if  \eqref{eq:condEq} is degenerate in the sense that $\sigma = \nu t + G(\sigma)$ admits multiple solutions $\sigma$ for some times $t$. 
These times at which \eqref{eq:condEq} becomes degenerate will actually mark the occurrence of blowups in the original dPMF dynamics.
To avoid dealing with such degeneracies, it is actually desirable to reformulate the fixed-point problem in terms of the inverse time change.
This is because by contrast with the possibly discontinuous increasing function $\Phi$, the inverse time change $\Psi=\Phi^{-1}$ is defined as a continuous nondecreasing function.
Such a formulation yields our main result stated in Theorem \ref{th:fixedpoint_int}, which directly follows from the following proposition:

\begin{proposition}\label{def:fixedPoint}
The inverse time-change $\Psi$ parametrizes a dPMF dynamics if and only if it solves the fixed-point problem
\begin{eqnarray}\label{eq:fixedPoint}
\forall \; \sigma \geq 0 \, , \quad \Psi(\sigma) = \sup_{0 \leq \xi \leq \sigma} \big( \xi - \lambda G[\Psi](\xi)\big) / \nu \, .
\end{eqnarray}
\end{proposition}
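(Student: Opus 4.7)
The plan is to use Proposition~\ref{prop:qPDE} as the pivot. It states that $\Psi=\Phi^{-1}$ parametrizes a dPMF dynamics if and only if $\Phi$ lies in $\mathcal{T}$ and $\Phi(t)=\nu t+\lambda G(\Phi(t))$ for every $t\geq 0$, equivalently $h(\Phi(t))=t$ where I set $h(\xi)=(\xi-\lambda G[\Psi](\xi))/\nu$. Two properties of $h$ will be used throughout both directions. First, $h$ is continuous because $G=G[\Psi]$ is smooth by Proposition~\ref{prop:Renewal}. Second, since $g=G'\geq 0$ is a nonnegative boundary flux, $h'=(1-\lambda g)/\nu\leq 1/\nu$, so $h$ is Lipschitz from above with constant $1/\nu$.

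For the reverse implication, I would assume $\Psi(\sigma)=\sup_{0\leq\xi\leq\sigma}h(\xi)$ and first observe that $\Psi$ is continuous and nondecreasing (as the running supremum of a continuous function) and that the one-sided Lipschitz bound passes from $h$ to $\Psi$: $\Psi(\sigma_2)-\Psi(\sigma_1)\leq(\sigma_2-\sigma_1)/\nu$. Consequently its right-continuous inverse $\Phi$ is \emph{c\`adl\`ag} with difference quotients bounded below by $\nu$, i.e., $\Phi\in\mathcal{T}$. Rewriting $\Phi(t)=\inf\{\sigma\geq 0:\Psi(\sigma)>t\}=\inf\{\sigma\geq 0:h(\sigma)>t\}$ and exploiting continuity of $h$ then yields $h(\Phi(t))=t$ for every $t\geq 0$, which is exactly the condition of Proposition~\ref{prop:qPDE}.

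For the forward implication, I would assume $\Phi\in\mathcal{T}$ satisfies $h(\Phi(t))=t$; by continuity of $G$ the same identity holds in the left limit, $h(\Phi(t^-))=t$. I would then compare $\Psi=\Phi^{-1}$ with $\widetilde\Psi(\sigma)=\sup_{\xi\leq\sigma}h(\xi)$. On the range of $\Phi$, $\sigma=\Phi(t)$ directly gives $h(\sigma)=t=\Psi(\sigma)$. For $\sigma$ in a jump interval $(\Phi(t^-),\Phi(t))$, one has $\Psi\equiv t$, and the lower bound $\widetilde\Psi(\sigma)\geq h(\Phi(t^-))=t$ is immediate, so the remaining task is to show $h\leq t$ on this interval. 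Here I would use that the backward-time function $\xi(\sigma)=\Phi(\Psi(\sigma)-\epsilon)=\Phi(t-\epsilon)$ is constant on the interval, so the Dirac source in~\eqref{eq:qPDE} vanishes and $q(\sigma,\cdot)$ evolves as a plain absorbed Brownian motion starting from $q(\Phi(t^-),\cdot)$. Parametrizing by $\pi'=(\sigma-\Phi(t^-))/\lambda$, this yields
\begin{equation*}
G(\sigma)-G(\Phi(t^-))=\int_0^\infty H(\lambda\pi',x)\,q(\Phi(t^-),x)\,\dd x,
\end{equation*}
and hence $h(\sigma)-t=(\lambda/\nu)\bigl(\pi'-\int H(\lambda\pi',x)\,q(\Phi(t^-),x)\,\dd x\bigr)$. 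Invoking the characterization of the physical blowup size $\pi=(\Phi(t)-\Phi(t^-))/\lambda$ as the smallest solution of the self-consistent equation~\eqref{eq:defjump_int} then gives $\pi'\leq\int H(\lambda\pi',x)\,q(\Phi(t^-),x)\,\dd x$ for every $\pi'\in(0,\pi)$, so $h(\sigma)\leq t$ on the jump interval and $\widetilde\Psi=\Psi$.

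The delicate step is the last one. Proposition~\ref{prop:qPDE} alone pins down $h(\Phi(t^\pm))=t$ only at the endpoints of a jump and is silent about its size; the jump equation $\pi=\int H(\lambda\pi,x)\,q(\Phi(t^-),x)\,\dd x$ can admit several solutions, and the supremum formulation is precisely what singles out the smallest, physically meaningful one. Making the link between this minimality and the clause ``$\Psi$ parametrizes a dPMF dynamics'' rigorous is where the proof requires the most care and leans on the local blowup analysis surrounding~\eqref{eq:defjump_int}.
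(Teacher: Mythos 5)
Your \emph{reverse} implication (fixed-point $\Rightarrow$ dPMF, i.e., the paper's ``reciprocally'') follows the paper essentially verbatim: you show $\Psi$ is continuous, nondecreasing, with $w_\Psi\leq 1/\nu$, so that $\Phi=\Psi^{-1}\in\mathcal{T}$, then exploit continuity of $G$ to unravel $\Phi(t)=\inf\{\sigma: h(\sigma)>t\}$ and conclude $h(\Phi(t))=t$. That part is correct and matches.

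Your \emph{forward} implication diverges from the paper, and the divergence is the interesting part. The paper dispenses with this direction in a single line: writing $\sigma=\Phi(t)$ gives $\Psi(\sigma)=h(\sigma)$, and the sup equality is asserted to follow from $\Psi$ being nondecreasing. But this only shows $\Psi(\sigma)=h(\sigma)$ on the range of $\Phi$, and says nothing about the behavior of $h$ on the jump intervals $(\Phi(t^-),\Phi(t))$. Since $h'=(1-\lambda g)/\nu$ can be negative there, $h$ is not globally nondecreasing and the claimed sup equality does \emph{not} follow from monotonicity of $\Psi$ alone. You are right to flag this: one must separately argue $h\leq\Psi$ inside each jump interval. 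Your strategy---constancy of the backward time there, hence pure absorbed diffusion, hence the explicit formula $G(\sigma)-G(\Phi(t^-))=\int H(\sigma-\Phi(t^-),x)\,q(\Phi(t^-),x)\,\dd x$ and the reduction $h(\sigma)-t=(\lambda/\nu)\zeta(\pi')$---is the correct computation.

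Where your argument has a genuine soft spot is exactly where you yourself flag it: the final step invokes the minimality of $\pi$ as encoded in \eqref{eq:defjump_int}, but that characterization is established only in Theorem~\ref{th:jump}, which in turn is derived \emph{from} the fixed-point formulation that Proposition~\ref{def:fixedPoint} is meant to justify. Used naively, this is circular. The condition $h(\Phi(t))=t$ obtained from Proposition~\ref{prop:qPDE} fixes $\zeta(\pi)=0$ at the endpoints of the jump, but $\zeta$ may have several roots, and Proposition~\ref{prop:qPDE} alone does not force $\Phi$ to select the smallest one. In other words, the forward implication as literally stated requires an additional ``physicality'' qualifier (minimal jump size), which the paper tacitly assumes without proof. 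Your reconstruction thus exposes an imprecision in the paper rather than a flaw in your own reasoning; a non-circular repair would need to derive the minimality of the jump size directly from the weak formulation or the underlying stochastic dynamics before invoking it here, which neither you nor the paper does.
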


\begin{proof}
Consider $\Phi$ in $\mathcal{T}$ solving \eqref{eq:condEq}. Then, for all $\sigma \geq 0$, writing $\sigma=\Phi(t)$ and $t=\Psi(\sigma)$ in \eqref{eq:condEq}, we have 
\begin{eqnarray*}
\Psi(\sigma) = \big( \sigma - \lambda G[\Psi](\sigma)\big) / \nu = \sup_{0 \leq \xi \leq \sigma} \big( \xi - \lambda G[\Psi](\xi)\big) / \nu \, ,
\end{eqnarray*}
where the last equality follows from $\Psi=\Phi^{-1}$ being \emph{c\`adl\`ag} increasing by definition of $\mathcal{T}$.

Reciprocally, consider $\Phi$ solving the fixed-point problem \eqref{eq:fixedPoint}.
Then, $\Phi$ is necessarily a nonnegative, \emph{c\`adl\`ag}, nondecreasing function for being defined as a running maximum function with $\Psi(0)=0$.
Moreover, the function $\sigma \mapsto \eta(\sigma)= \sigma - \Psi^{-1}(\Psi(\sigma)-\epsilon)$ is a nonnegative, \emph{c\`adl\`ag} function so that $G$ is a well-defined cumulative function satisfying the renewal-type equation \eqref{eq:DuHamel}. 
In particular $G$ is a nondecreasing smooth function so that $w_\Psi$ satisfies $w_\Psi \leq 1/\nu$. This shows that $\Phi=\Psi^{-1}$ belongs to $\mathcal{T}$.
Finally, we conclude by observing that
\begin{eqnarray*}
\Phi(t) 
&=& \inf  \Big \{ \sigma \geq 0 \, \Big \vert \, \Psi(\sigma) > t  \Big  \} \, , \\
&=& \inf \Big \{ \sigma \geq 0 \, \Big \vert \,\sup_{0 \leq \xi \leq \sigma} \big( \xi - \lambda G[\Psi](\xi)\big) / \nu > t \Big \} \, , \\
&=& \inf \Big \{ \sigma \geq 0 \, \Big \vert \, \big( \sigma - \lambda G[\Psi](\sigma)\big) / \nu > t \Big \} \, .
\end{eqnarray*}
Thus, by continuity of $G$, we have
\begin{eqnarray*}
\Phi(t) = \inf \Big \{ \sigma \geq 0 \, \Big \vert \, \big( \sigma - \lambda G[\Psi](\sigma)\big) / \nu = t \Big \} \, , 
\end{eqnarray*}
so that when $\Phi(t)$ exists, it necessarily satisfies \eqref{eq:condEq}.
\end{proof}

The above proposition proves our main result stated in the introduction as Theorem \ref{th:fixedpoint_int}. 
The next section establishes its practical usefulness by showing that the corresponding fixed-point problem admits solutions parametrizing explosive dPMF dynamics.

\section{Local blowup solutions}\label{sec:local}

In this section, we establish that for large enough interaction parameters, the fixed-point problem \ref{def:fixedPoint} locally admits solutions with analytically well-defined blowups.
To show this, we first define a contracting, regularized fixed-point map $\mathcal{F}_\delta$ over an appropriately chosen Banach space of candidate functions.
We then show that $\Psi = \lim_{\delta \to 0^+} \Psi_\delta$, where $\Psi_\delta$ uniquely solves the fixed-point equation $\Psi=\mathcal{F}_\delta[\Psi]$, defines locally the unique maximum smooth inverse time change up to the first putative blowup.
Finally, we analytically resolve the ensuing blowup episode by interpreting blowup  in the time-changed picture as linear dynamics with absorption but without reset.

\subsection{Regularized fixed-point problem}

The direct resolution of the dPMF fixed-point problem \ref{def:fixedPoint}  is not possible by standard analysis when allowing for discontinuous time changes.
To remedy this point, we consider a set of regularized fixed-point problems which approximates the original one, but for which delay functions will be continuous.
These approximate problems are defined on the following restricted space of candidate solutions:

\begin{definition}
Given a real $\delta$ such that $0<\delta<1/\nu$ and for all $\xi>0$, we restrict the candidate space for inverse time changes to 
\begin{eqnarray*}
\mathcal{C}_\delta([\xi_0,\xi])) = \left\{ \Psi \in C_0([\xi_0, \xi]) \, \Bigg \vert \, 
\begin{array}{ccc}
 \Psi(\xi)=\Psi_0(\xi)  &  ,  &   \xi_0 \leq \xi \leq 0  \vspace{5pt} \\
 \delta \leq w_\Psi(y,x)  \leq 1/\nu  & ,  &  0 \leq x , y \leq \xi 
\end{array}
\right\} \, ,
\end{eqnarray*}
where $w_\Psi$ is the difference quotient $w_\Psi(y,x) = (\Psi(y) - \Psi(x))/(y-x)$.
\end{definition}

For all $\xi> 0$, the candidate space $\mathcal{C}_\delta([\xi_0,\xi]))$ is a Banach space with respect to the uniform norm, denoted by $\Vert \cdot \Vert_{\xi_0,\xi}$.
Every candidate functions $\Psi$ in $\mathcal{C}_\delta([\xi_0,\infty))$ can naturally serve as an inverse time change, i.e.,  $\Psi^{-1}$ belongs to $\mathcal{T}$ .
Moreover, choosing $\delta>0$ enforces that every $\Psi$ in $\mathcal{C}_\delta([\xi_0,\infty))$ is a strictly increasing, continuous function on $\mathbbm{R}^+$, so that $\Psi^{-1}$ is also  a strictly increasing, continuous function on $\mathbbm{R}^+$ with difference quotient satisfying $\nu \leq w_{\Psi^{-1}}(y,x) \leq 1/\delta$.
As a result, the forward function $\tau[\Psi]$  is continuous in $C([\xi_0,\infty))$ and so is the backward function $\xi[\Psi]$ in $C([0,\infty))$, being defined as:
\begin{eqnarray*}
\tau[\Psi](\xi) = \Psi^{-1}\big(\Psi(\xi)+\epsilon \big)  \quad \mathrm{and} \quad
\xi[\Psi](\sigma) = \Psi^{-1}\big(\Psi(\sigma)-\epsilon \big) \, . 
\end{eqnarray*}
In the absence of discontinuities, we have the equivalence $\tau=\tau[\Psi](\xi) \Leftrightarrow \xi[\Psi](\tau)=\xi$, which implies that $\gamma[\Psi](\xi)=\eta[\Psi](\tau)$.
The  key reason to introduce the space $\mathcal{C}_\delta([\xi_0,\xi]))$  is the following uniform Lipschitz property:

\begin{proposition}\label{prop:Lipsc}
For all $\Psi$ in $\mathcal{C}_\delta([\xi_0,\infty))$, the map $\Psi \mapsto \xi[\Psi]$ are $1/\delta$-Lipschitz with respect to the uniform norm on $\mathcal{C}_\delta([\xi_0,\infty))$. \end{proposition}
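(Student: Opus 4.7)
The map $\xi[\Psi]$ admits the explicit representation $\xi[\Psi](\sigma)=\Psi^{-1}(\Psi(\sigma)-\epsilon)$, or equivalently in implicit form, $t=\xi[\Psi](\sigma)$ is the unique point in $[\xi_0,\sigma]$ at which $\Psi(\sigma)-\Psi(t)=\epsilon$. The plan is to exploit this implicit characterization together with the uniform lower bound $\delta$ on the difference quotients of members of $\mathcal{C}_\delta$, which is precisely what is needed to convert $\Psi$-range differences into $\sigma$-range differences with the correct constant.

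First I would record the auxiliary fact that for any $\Psi\in\mathcal{C}_\delta$ the lower bound $\Psi(y)-\Psi(x)\geq\delta(y-x)$ for $y\geq x$ inverts to give that $\Psi^{-1}$ is $(1/\delta)$-Lipschitz on its domain; writing $u=\Psi(x)$, $v=\Psi(y)$ yields $\Psi^{-1}(v)-\Psi^{-1}(u)=y-x\leq(v-u)/\delta$. This will be the single quantitative ingredient used throughout.

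For the main estimate, I would fix $\sigma\geq 0$ and $\Psi_1,\Psi_2\in\mathcal{C}_\delta([\xi_0,\infty))$ and set $t_i=\xi[\Psi_i](\sigma)$. The defining identity gives $\Psi_i(\sigma)-\Psi_i(t_i)=\epsilon$ for $i=1,2$, which, subtracted against each other, yields the key constraint
\begin{equation*}
\Psi_1(t_1)-\Psi_2(t_2)=\Psi_1(\sigma)-\Psi_2(\sigma).
\end{equation*}
Assuming without loss of generality that $t_1\geq t_2$, I would split the left-hand side as $[\Psi_1(t_1)-\Psi_1(t_2)]+[\Psi_1(t_2)-\Psi_2(t_2)]$, apply the lower bound $\Psi_1(t_1)-\Psi_1(t_2)\geq\delta(t_1-t_2)$, and rearrange to obtain a pointwise control on $\delta(t_1-t_2)$ in terms of evaluations of $\Psi_1-\Psi_2$. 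Passing to the supremum norm then produces the announced estimate $\|\xi[\Psi_1]-\xi[\Psi_2]\|_{0,\infty}\leq(1/\delta)\|\Psi_1-\Psi_2\|_{\xi_0,\infty}$.

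The main delicate point I anticipate is securing the sharp constant $1/\delta$ rather than the naive $2/\delta$ that a direct triangle inequality produces when the two evaluations $(\Psi_1-\Psi_2)(\sigma)$ and $(\Psi_1-\Psi_2)(t_2)$ are independently bounded by the uniform norm. Extracting $1/\delta$ requires a more careful use of the rigidity $\Psi_1(t_1)-\Psi_2(t_2)=\Psi_1(\sigma)-\Psi_2(\sigma)$, for instance by evaluating at a point $\sigma^\ast$ realizing $\sup_\sigma|\xi[\Psi_1](\sigma)-\xi[\Psi_2](\sigma)|$ and exploiting monotonicity of the $\Psi_i$ to rule out the worst-case sign configuration. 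Along the way one must also handle the boundary regime in which $t_1$ or $t_2$ falls in the shared initial interval $[\xi_0,0]$, where $\Psi_1\equiv\Psi_2\equiv\Psi_0$ and the estimate is automatically tighter, and verify that $\xi[\Psi_i](\sigma)$ is well defined for every $\sigma\geq 0$, which follows from strict monotonicity of $\Psi_i$ together with $\Psi_i(\xi_0)=-\epsilon$ and the growth $\Psi_i(\sigma)\to\infty$ as $\sigma\to\infty$.
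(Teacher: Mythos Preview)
Your approach is the same in spirit as the paper's: both rest on the representation $\xi[\Psi](\sigma)=\Psi^{-1}(\Psi(\sigma)-\epsilon)$ together with the $1/\delta$-Lipschitz property of $\Psi^{-1}$ that comes from the lower bound $w_\Psi\geq\delta$. The paper's argument is in fact a two-line assertion that passes directly from $|\Psi_a^{-1}(\Psi_a(\sigma)-\epsilon)-\Psi_b^{-1}(\Psi_b(\sigma)-\epsilon)|$ to $|\Psi_a(\sigma)-\Psi_b(\sigma)|/\delta$, without commenting on the fact that two \emph{different} inverses are being evaluated at two \emph{different} points.

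You have put your finger on exactly the soft spot: the straightforward route (your implicit identity $\Psi_1(t_1)-\Psi_2(t_2)=\Psi_1(\sigma)-\Psi_2(\sigma)$ plus the difference-quotient bound, or equivalently a triangle inequality on the explicit formula) only yields $2/\delta$, not $1/\delta$. Your proposed remedy---passing to a maximizing $\sigma^\ast$ and appealing to monotonicity to exclude the bad sign configuration---does not close the gap as stated. At $\sigma^\ast$ one still has $\delta(t_1^\ast-t_2^\ast)\leq D(\sigma^\ast)-D(t_2^\ast)$ with $D=\Psi_1-\Psi_2$, and nothing about the extremality of $\sigma^\ast$ forces $D(\sigma^\ast)$ and $D(t_2^\ast)$ to share a sign; $D$ can oscillate, and the right-hand side can genuinely be as large as $2\Vert D\Vert$. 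I do not see a way to recover the sharp $1/\delta$ on all of $[\xi_0,\infty)$ from these ingredients alone, and I am not convinced the constant $1/\delta$ is actually correct in that generality.

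This is, however, immaterial for the paper. The proposition feeds only into the contraction estimate, where the Lipschitz constant enters multiplicatively against $\Vert h(\cdot,\Lambda)\Vert_{0,\sigma}$, which vanishes as $\sigma\to 0^+$; any constant of the form $C/\delta$ makes $\mathcal{F}_\delta$ a contraction on a small enough interval. The honest course is to prove the $2/\delta$ bound cleanly via your implicit-relation argument and note that it suffices. If you want the stated $1/\delta$, you can recover it on the restricted interval $\sigma\in[0,\nu\epsilon]$ actually used in the contraction step, since there $t_i=\xi[\Psi_i](\sigma)\in[\xi_0,0]$ where $\Psi_1\equiv\Psi_2\equiv\Psi_0$, so $D(t_i)=0$ and the offending second term drops out.
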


\begin{proof}
Every function $\Psi$ in $\mathcal{C}_\delta([\xi_0,\xi]))$ admits a continuous inverse function $\Psi^{-1}$ on $\mathbbm{R}^+$ with bounded difference quotient such that $\nu \leq w_{\Psi^{-1}}\leq 1/\delta$.
Therefore, for all $\sigma>0$:
\begin{eqnarray*}
\vert \xi[\Psi_a](\sigma)-\xi[\Psi_b](\sigma) \vert &=& \vert \Psi^{-1}_a(\Psi_a(\sigma) - \epsilon) - \Psi^{-1}_b(\Psi_b(\sigma) - \epsilon) \vert \, \\
& \leq & \vert \Psi_a(\sigma)-\Psi_b(\sigma) \vert /\delta \, .
\end{eqnarray*}
\end{proof}

Our goal is to formulate the dPMF fixed-point problem \ref{def:fixedPoint} on the candidate Banach space $\mathcal{C}_\delta([\xi_0,\infty))$.
However, it turns out that the natural fixed-point map
\begin{eqnarray*}
\Psi \mapsto \Big\{ \sigma \mapsto \sup_{\xi_0 \leq \xi \leq \sigma}\big(\xi-\lambda G[\Psi](\xi)\big) / \nu \Big\} \, .
\end{eqnarray*}
does not stabilize  $\mathcal{C}_\delta([\xi_0,\infty))$ in the sense that it can produce functions with difference quotient below $\delta$.
To define a fixed-point map that stabilizes $\mathcal{C}_\delta([\xi_0,\infty))$, we need to introduce the function $\mathcal{S}_\delta: \mathcal{C}_{-\infty}([\xi_0,\infty)) \rightarrow \mathcal{C}_\delta([\xi_0,\infty))$ defined by
\begin{eqnarray*}
\mathcal{S}_\delta[\psi](\sigma) = \sup_{0 \leq s \leq \sigma} \left\{ \psi(s)-\delta s \right\} + \delta \sigma \, .
\end{eqnarray*}
The stabilizing role of $\mathcal{S}_\delta$ follows from noticing that for all $0 \leq x \leq y$, we have
\begin{eqnarray*}
\mathcal{S}_\delta[\psi](y)-\mathcal{S}_\delta[\psi](x) = \! \sup_{0 \leq s \leq y} \left\{ \psi(s)-\delta s \right\} - \! \sup_{0 \leq s \leq x} \left\{ \psi(s)-\delta s \right\} + \delta (y-x) \geq \delta (y-x) \, .
\end{eqnarray*}
This shows that $w_{\mathcal{S}_\delta[\psi]} \geq \delta$, so that $\mathcal{S}_\delta$ stabilizes $\mathcal{C}_\delta([\xi_0,\infty))$ from below.
In turn, we can show that composing the natural fixed-point map of  Definition \ref{def:fixedPoint} with $\mathcal{S}_\delta$ defines a well-posed map on $\mathcal{C}_\delta([\xi_0,\infty))$.

With these conventions, the regularized fixed-point map is specified as follows.


\begin{proposition}\label{def:fixedPoint2}
Given $\Psi$ in $\mathcal{C}_\delta([\xi_0,\infty))$ and initial conditions $(q_0,g_0)$, setting   
\begin{eqnarray*}
\mathcal{F}_\delta [\Psi] &=&  \mathcal{S}_\delta [\psi] \quad \mathrm{with} \quad 
\psi (\sigma)=\left\{
\begin{array}{ccc}
\displaystyle  \frac{1}{\nu}\big( \sigma - \lambda G[\Psi](\sigma)  \big) &\: \mathrm{if} \: &  \sigma \geq 0 \, , \\
\Psi_0(\sigma)  & \: \mathrm{if} \: &   \xi_0 \leq \sigma < 0 \, .
\end{array}
\right. 
\end{eqnarray*}
defines a map from $\mathcal{C}_\delta([\xi_0,\infty))$ to $\mathcal{C}_\delta([\xi_0,\infty))$ such that $\psi$ is a smooth function on $(0,\infty)$.
\end{proposition}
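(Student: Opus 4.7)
The plan is to verify that $\mathcal{F}_\delta[\Psi]$ satisfies the three defining conditions of membership in $\mathcal{C}_\delta([\xi_0,\infty))$: it should coincide with $\Psi_0$ on $[\xi_0,0]$, and its difference quotient should be bounded below by $\delta$ and above by $1/\nu$ on $[0,\infty)$. Smoothness of $\psi$ on $(0,\infty)$ and an upper bound $w_\psi\leq 1/\nu$ will be the cornerstone from which both the smoothness claim and the upper difference-quotient bound of $\mathcal{F}_\delta[\Psi]$ flow; the lower bound $w_{\mathcal{F}_\delta[\Psi]}\geq \delta$ is essentially built into the definition of $\mathcal{S}_\delta$.

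First I would check that for $\Psi$ in $\mathcal{C}_\delta([\xi_0,\infty))$ the delay function $\eta[\Psi]$ is well defined and continuous. Since $\delta>0$, $\Psi$ is strictly increasing and continuous, so its inverse $\Psi^{-1}$ is continuous with $\nu\leq w_{\Psi^{-1}}\leq 1/\delta$, and hence $\eta[\Psi](\sigma)=\sigma-\Psi^{-1}(\Psi(\sigma)-\epsilon)$ is a continuous function on $[0,\infty)$ satisfying $\eta\geq \nu\epsilon$. Consequently $\eta[\Psi]\in\mathcal{W}$, and Proposition~\ref{prop:Renewal} gives a unique cumulative flux $G[\Psi]$ solving the quasi-renewal equation \eqref{eq:DuHamel}. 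The regularity of $\sigma\mapsto H(\sigma,\Lambda)$ combined with the continuity of $\eta[\Psi]$ transfers to $G[\Psi]$, which is smooth on $(0,\infty)$; hence $\psi(\sigma)=(\sigma-\lambda G[\Psi](\sigma))/\nu$ is smooth on $(0,\infty)$. Moreover, $G[\Psi]$ is nondecreasing (being a cumulative flux), so $G'[\Psi]\geq 0$ and therefore $\psi'(\sigma)\leq 1/\nu$, i.e., $w_\psi\leq 1/\nu$ on $[0,\infty)$.

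The main technical point is then to show that the operator $\mathcal{S}_\delta$ preserves this upper bound while imposing the lower bound $\delta$. The lower bound $w_{\mathcal{S}_\delta[\psi]}\geq \delta$ is the calculation already carried out in the excerpt. For the upper bound, fix $0\leq x\leq y$ and write
\begin{equation*}
\mathcal{S}_\delta[\psi](y)-\mathcal{S}_\delta[\psi](x)=\Bigl(\sup_{0\leq s\leq y}\{\psi(s)-\delta s\}-\sup_{0\leq s\leq x}\{\psi(s)-\delta s\}\Bigr)+\delta(y-x).
\end{equation*}
If the outer supremum is attained on $[0,x]$, the bracketed term vanishes and the difference is $\delta(y-x)\leq (y-x)/\nu$. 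Otherwise, for any $s\in[x,y]$, the estimate $w_\psi\leq 1/\nu$ yields $\psi(s)-\delta s-(\psi(x)-\delta x)\leq (1/\nu-\delta)(s-x)$, so the bracketed term is bounded above by $(1/\nu-\delta)(y-x)$, giving $\mathcal{S}_\delta[\psi](y)-\mathcal{S}_\delta[\psi](x)\leq(y-x)/\nu$. Combined with the lower bound, this shows that $\mathcal{F}_\delta[\Psi]=\mathcal{S}_\delta[\psi]$ has difference quotient in $[\delta,1/\nu]$ on $[0,\infty)$.

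It remains to verify the initial-condition constraint $\mathcal{F}_\delta[\Psi]=\Psi_0$ on $[\xi_0,0]$ and continuity across $\sigma=0$. By convention $\mathcal{S}_\delta$ leaves $\psi$ unchanged on $[\xi_0,0]$, where $\psi$ was set to $\Psi_0$; the assumption that $\Psi_0$ itself lies in $\mathcal{C}_\delta([\xi_0,0])$ (which is a standing requirement on admissible initial conditions) transfers the required bounds to that interval. Continuity at $\sigma=0$ follows from $G[\Psi](0)=0$ and $\Psi_0(0)=0$, which give $\psi(0^+)=0=\Psi_0(0)$, and then $\mathcal{S}_\delta[\psi](0^+)=\psi(0)=0$ by definition of the running supremum. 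This completes the verification that $\mathcal{F}_\delta$ maps $\mathcal{C}_\delta([\xi_0,\infty))$ into itself, while the smoothness of $\psi$ on $(0,\infty)$ was established along the way.
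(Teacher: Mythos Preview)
Your proof is correct and follows essentially the same route as the paper's: verify that $\eta[\Psi]$ (equivalently $\gamma[\Psi]$) is a valid delay function, invoke the renewal-type equation to obtain smoothness and monotonicity of $G[\Psi]$, conclude $w_\psi\leq 1/\nu$, and then apply $\mathcal{S}_\delta$. The paper's version is considerably terser---it simply asserts that $\psi\in\mathcal{C}_{-\infty}([\xi_0,\infty))$ and that $\mathcal{S}_\delta$ maps $\mathcal{C}_{-\infty}$ into $\mathcal{C}_\delta$---whereas you explicitly check the upper difference-quotient bound for $\mathcal{S}_\delta[\psi]$ and the matching at $\sigma=0$, which the paper leaves implicit.
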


\begin{proof}
Let us check that for all $\Psi$ in $\mathcal{C}_\delta([\xi_0,\infty))$, $\mathcal{F}_\delta [\Psi]$ is also in $\mathcal{C}_\delta([\xi_0,\infty))$.
Given $\Psi$ in $\mathcal{C}_\delta([\xi_0,\infty))$, $\gamma[\Psi]$ is a positive, continuous, bounded function, which represents a forward-delay function compatible with the initial condition $ g_0$ on $[\xi_0,0)$.
In turn, we can interpret  $G[\Psi]$ as the associated cumulative flux, which is a smooth function on $(0,\infty)$ for satisfying the renewal-type equation \eqref{eq:DuHamel}.
In particular, $G[\Psi]$ is continuously differentiable on $C((0,\infty))$, with positive derivative denoted $g[\Psi]$.
Then, the definition of $\psi$ in terms of $G[\Psi]$ implies that $\psi$ belongs to  $\mathcal{C}_{-\infty}([\xi_0,\infty))$, so that $\Psi=\mathcal{S}_\delta[\Psi]$  belongs to  $\mathcal{C}_{\delta}([\xi_0,\infty))$.
\end{proof}

%
%

\subsection{Contraction argument}


We establish in the following proposition that the mapping $\mathcal{F}_\delta$ from Proposition \ref{def:fixedPoint2} is a contraction on the Banach spaces $\mathcal{C}_{\delta}([\xi_0,\sigma))$ for small enough $\sigma>0$. 
The proof will rely on the fact that for $\sigma>0$ smaller than the time-wrapped refractory periods, the mapping $\mathcal{F}_\delta$ loses its renewal character. 
Then, the Lipschitz continuity of the  mapping $\Psi \mapsto \xi[\Psi]$ on $\mathcal{C}_\delta([\xi_0,\sigma])$ and the vanishing behavior of the first-passage kernel $H$ for small time will directly yield the result.

\begin{proposition}\label{prop:fixedPoint}
For small enough $\sigma>0$, the map $\mathcal{F}_\delta$ is a contraction on $\mathcal{C}_{\delta}([\xi_0,\sigma))$ for the uniform norm denoted by $\Vert \cdot \Vert_{0,\sigma}$.
\end{proposition}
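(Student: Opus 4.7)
The plan is to reduce the contraction bound for $\mathcal{F}_\delta$ to a quantitative Lipschitz estimate on the map $\Psi \mapsto G[\Psi]$. The stabilizer $\mathcal{S}_\delta$ is $1$-Lipschitz in the uniform norm: taking a running supremum of $\psi - \delta\,\mathrm{id}$ does not increase uniform distances, and adding $\delta\sigma$ does not either. Since the inner map sends $\Psi$ to $\sigma \mapsto (\sigma - \lambda G[\Psi](\sigma))/\nu$, this yields
\begin{equation*}
\|\mathcal{F}_\delta[\Psi_1] - \mathcal{F}_\delta[\Psi_2]\|_{0,\sigma} \le \frac{\lambda}{\nu} \|G[\Psi_1] - G[\Psi_2]\|_{0,\sigma},
\end{equation*}
so it suffices to bound $\|G[\Psi_1] - G[\Psi_2]\|_{0,\sigma}$ by a factor times $\|\Psi_1 - \Psi_2\|_{0,\sigma}$ that vanishes as $\sigma \to 0^+$.

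The key observation is the announced loss of renewal character at small timescale. For any $\Psi \in \mathcal{C}_\delta$ one has $\eta[\Psi] \ge \nu\epsilon$, hence for $\sigma < \nu\epsilon$ and $0 \le \tau \le \sigma$ the argument $\tau - \eta[\Psi](\tau)$ stays strictly negative. The convention $G = G_0$ on $[\xi_0,0)$ then turns the quasi-renewal equation \eqref{eq:DuHamel} into
\begin{equation*}
G[\Psi](\sigma) = \int_0^\infty H(\sigma, x) q_0(x) \, dx + \int_0^\sigma H(\sigma - \tau, \Lambda) \, dG_0(\xi[\Psi](\tau)),
\end{equation*}
in which only the continuous nondecreasing function $\xi[\Psi] = \mathrm{id} - \eta[\Psi]$ depends on $\Psi$. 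An integration by parts (using $H(0,\Lambda) = 0$ and the $\Psi$-independence of the boundary term at $\tau = 0$) recasts the $\Psi$-dependent contribution as $\int_0^\sigma h(\sigma - \tau, \Lambda) G_0(\xi[\Psi](\tau)) \, d\tau$. Combining the Lipschitz character of $G_0 = (\mathrm{id} - \nu \Psi_0)/\lambda$ (with constant $1/\lambda$, coming from $\Psi_0$ being $1/\nu$-Lipschitz as the generalized inverse of a function of slope $\ge \nu$) with Proposition \ref{prop:Lipsc}, which gives $\|\xi[\Psi_1] - \xi[\Psi_2]\|_{0,\sigma} \le \|\Psi_1 - \Psi_2\|_{0,\sigma}/\delta$, one obtains
\begin{equation*}
\|G[\Psi_1] - G[\Psi_2]\|_{0,\sigma} \le \frac{\|\Psi_1 - \Psi_2\|_{0,\sigma}}{\lambda\delta} \int_0^\sigma h(\sigma-\tau,\Lambda)\,d\tau = \frac{H(\sigma,\Lambda)}{\lambda\delta} \|\Psi_1 - \Psi_2\|_{0,\sigma}.
\end{equation*}
Plugging this into the reduction of the first paragraph gives a contraction constant $H(\sigma,\Lambda)/(\nu\delta)$ for $\mathcal{F}_\delta$. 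Since the first-passage density $h(\cdot,\Lambda)$ vanishes super-exponentially at the origin, $H(\sigma,\Lambda) \to 0^+$, so any $\sigma$ small enough that $H(\sigma,\Lambda) < \nu\delta$ makes $\mathcal{F}_\delta$ a strict contraction on $\mathcal{C}_\delta([\xi_0,\sigma))$.

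The main technical point is the honest handling of the Stieltjes integral $dG_0 \circ \xi[\Psi]$, specifically justifying the integration by parts and the pointwise Lipschitz bound on $G_0$ when $g_0$ is only a bounded measure; here the difficulty is resolved automatically because the explicit form $G_0 = (\mathrm{id} - \nu \Psi_0)/\lambda$ yields a Lipschitz constant depending only on $\lambda$, but the argument would need adjustment for initial data with atoms in $g_0$. Once the local contraction is established on $\mathcal{C}_\delta([\xi_0,\sigma))$, the Banach fixed-point theorem produces a unique local solution $\Psi_\delta$, providing the starting point for the subsequent passage $\delta \to 0^+$ in the section.
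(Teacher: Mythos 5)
Your proof is correct, and it takes a genuinely different route from the paper's. Both proofs begin the same way: they observe that $\mathcal{S}_\delta$ is $1$-Lipschitz and that for $\sigma \le \nu\epsilon$ the quasi-renewal equation loses its renewal character, so the $\Psi$-dependence of $G[\Psi]$ is isolated in a single Stieltjes integral against the initial flux data. The divergence is in how that integral is handled. You integrate by parts (using $H(0,\Lambda)=0$ and the $\Psi$-independence of the boundary term at $\tau=0$) to pass to a Lebesgue integral with kernel $h(\sigma-\tau,\Lambda)$, then combine the Lipschitz continuity of $G_0$ with Proposition~\ref{prop:Lipsc} to obtain $\|G[\Psi_1]-G[\Psi_2]\|_{0,\sigma}\le H(\sigma,\Lambda)\|\Psi_1-\Psi_2\|_{0,\sigma}/(\lambda\delta)$. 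The paper instead performs the change of variable $t=\Psi(\tau)$, splits the resulting difference into two pieces, bounds each using the $\|h(\cdot,\Lambda)\|_{0,\sigma}$-Lipschitz continuity of $H(\cdot,\Lambda)$ together with the $1/\delta$-Lipschitz continuity of $\Phi_b$, and controls the remaining mass via conservation of probability $\int f_0 \le 1$. Your route is shorter and exploits Proposition~\ref{prop:Lipsc}, which the paper proved but does not actually reuse here; the paper's route has the virtue of needing only the total mass of $F_0$ rather than any modulus of continuity of $G_0$. Your caveat about atoms in $g_0$ is unnecessarily cautious: because $\Psi_0$ has difference quotient at most $1/\nu$ (as the generalized inverse of a cumulative drift with slope $\ge\nu$), $G_0=(\mathrm{id}-\nu\Psi_0)/\lambda$ always has difference quotient in $[0,1/\lambda]$ and hence is automatically Lipschitz; the atoms of $f_0$ are encoded as flat stretches of $\Psi_0$, not as jumps of $G_0$. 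So the time-changed initial datum $g_0$ is structurally atom-free and your argument applies without restriction.
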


\begin{proof} We proceed in two steps: $(i)$ we justify that $\mathcal{G}_\delta$ induces a mapping $\mathcal{C}_\delta([\xi_0,\sigma)) \rightarrow \mathcal{C}_\delta([\xi_0,\sigma))$ that loses its renewal character for small enough $\sigma>0$; $(ii)$ we show that for small enough $\sigma>0$, $\mathcal{F}_\delta$ is a contraction for the uniform norm $\Vert \cdot \Vert_{0,\sigma}$, i.e., for all $\Psi_a,\Psi_b$ in $\mathcal{C}_{\delta}([\xi_0,\sigma))$, $\big \Vert \mathcal{F}_\delta[\Psi_a] - \mathcal{F}_\delta[\Psi_b] \big \Vert_{0,\sigma} \leq K \Vert \Psi_a - \Psi_b\Vert_{0,\sigma}$
with $K<1$.

$(i)$
The fact that $\mathcal{F}_\delta[\psi]$ belongs to $\mathcal{C}_{\delta}([\xi_0,\infty))$ follows from Proposition \ref{def:fixedPoint2}.
Let us  show that $\mathcal{F}_\delta[\psi]$ loses its renewal character when considered on $[0,\Psi^{-1}(\epsilon))$.
As $\delta \leq w_\Psi \leq 1/\nu$ and  $\Psi(0) =0$, we must have $ \nu \epsilon \leq \Psi^{-1}(\epsilon) \leq 1/\delta$.
Moreover, $\Psi^{-1}(\epsilon)$ is also defined as the time-changed refractory period after zero:
\begin{eqnarray*}
\Psi^{-1}(\epsilon)=  \Psi^{-1} \left(\Psi(0)+\epsilon \right) =   \gamma[\Psi](0) \, .
\end{eqnarray*}
This implies that on $[0,\Psi^{-1}(\epsilon))$, no more than one first-hitting time may occur in the inhomogeneous renewal processes determined by $\gamma[\Psi]$.
Correspondingly, over the time interval $[0, \Psi^{-1}(\epsilon)] \supset  [0,\nu \epsilon]$, the inhomogeneous renewal-type equation \eqref{eq:DuHamel} loses its renewal character to read 
\begin{eqnarray}\label{eq:nhomNotRenew}
G[\Psi](\sigma)
&=&  \int_{0}^\infty  H(\sigma,x)   q_0(x) \, \dd x + \int_0^\sigma H(\sigma-\tau,\Lambda) \, \dd G_0(\xi[\Psi](\tau)) \, . 
\end{eqnarray}
As the initial conditions prescribe $\Psi$ to coincide with $\Psi_0$ on $[\xi_0,0]$, this means that for all $0 \leq \tau \leq  \nu \epsilon$, we have
\begin{eqnarray*}
\xi[\Psi](\tau) = \Phi_0 \left( \Psi(\tau) - \epsilon \right) \, , \quad 0 \leq \tau \leq  \nu \epsilon \, ,
\end{eqnarray*}
so that remembering that $F_0 = G_0 \circ \Phi_0$ allows one to write \eqref{eq:nhomNotRenew} as
\begin{eqnarray*}
G[\Psi](\sigma)
&=&  \int_{0}^\infty  H(\sigma,x)   q_0(x) \, \dd x + \int_0^\sigma H(\sigma-\tau,\Lambda) \, \dd F_0(\Psi(\tau)-\epsilon) \, . 
\end{eqnarray*}
This shows that specifying $\mathcal{F}_\delta [\Psi]$, on $[0, \sigma)$ only requires knowledge of $\Psi$ on $[0,\sigma)$ for $\sigma \leq \nu \epsilon$.
Therefore, $\mathcal{F}_\delta$ is a mapping $\mathcal{C}_{\delta}([\xi_0,\sigma)) \rightarrow \mathcal{C}_{\delta}([\xi_0,\sigma))$ for all $0 < \sigma \leq \nu \epsilon$.


$(ii)$
Let us now consider two functions $\Psi_a$ and $\Psi_b$ in $\mathcal{C}_\delta([\xi_0,\infty))$.
For ease of notation, indexation by $a$ and $b$ will indicate throughout the proof relation to $\Psi_a$ and to $\Psi_b$, respectively.
For instance, we write $G_a=G[\Psi_a]$ and $G_b=G[\Psi_b]$.
By $(i)$, both cumulative functions $G_{a}$ and  $G_{b}$ satisfy a nonrenewal equation \eqref{eq:nhomNotRenew} with identical regular initial conditions on the interval $ [0, \nu \epsilon ]$.
As a result, the integral terms arising from $q_0$ in \eqref{eq:nhomNotRenew} are identical for both  $G_{a}$ and  $G_{b}$,
which allows one to write $\Delta G(\sigma)  = G_{a}(\sigma)-G_{b}(\sigma)$ for all $0 \leq \sigma \leq \nu \epsilon$ as
\begin{eqnarray*}
\Delta G(\sigma)
&=& 
\int_{0}^{\sigma }  H(\sigma-\tau,\Lambda) \,
 \dd \big[ F_0 (\Psi_a(\tau) -\epsilon)- F_0  (\Psi_b(\tau) -\epsilon) \big]  \, .
\end{eqnarray*}
where $F_0=G_0 \circ \Phi_0$ denotes the initial conditions for the cumulative flux of the original process $X_t$ in $\mathcal{M}([-\epsilon,0))$.
With no loss of generality, let us assume that $t_a=\Psi_a(\sigma) \leq \Psi_b(\sigma)=t_b$.
Then, performing the change of variables $t=\Psi_a(\tau)$ and $t=\Psi_b(\tau)$, we have 
\begin{eqnarray}
\int_0^\sigma  H(\sigma-\tau, \Lambda) \!\!\!\!\!\! && \dd \! \left[ F_0\big(\Psi_a(\tau)-\epsilon\big)-F_0\big(\Psi_b(\tau)-\epsilon \big) \right] \nonumber\\
&=& \int_0^{t_a} H(\sigma-\Phi_a(t), \Lambda) \, \dd F_0\big(t-\epsilon\big) - \int_0^{t_b} H(\sigma-\Phi_b(t), \Lambda) \, \dd F_0\big(t-\epsilon\big) \, ,
 \nonumber \\
&=& \int_0^{t_a} \big[ H(\sigma-\Phi_a(t), \Lambda) -H(\sigma-\Phi_b(\tau) \big] \, \dd F_0\big(t-\epsilon\big) \,  \nonumber \\
&& - \int_{t_a}^{t_b} H(\sigma-\Phi_b(t), \Lambda) \, \dd F_0\big(t-\epsilon\big) \label{eq:ineqBanach}
\end{eqnarray}
In turn, performing the change of variables $\tau=\Psi_a( t)$ in the first integral term of the equation above, denoted by $I_1$, yields
\begin{eqnarray*}
I_1(\sigma) & \leq & \int_0^{t_a} \big \vert H(\sigma-\Phi_a(t), \Lambda) - \, H(\sigma-\Phi_b(t), \Lambda) \big \vert \, \dd F_0\big(t-\epsilon\big) \, , \\
&=& \int_0^{\sigma} \big \vert H(\sigma-\tau, \Lambda) -H(\sigma-\Phi_b(\Psi_a(\tau)), \Lambda) \big \vert \, \dd F_0\big(\Psi_a(\tau)-\epsilon\big) \, , \\
&\leq&  \Vert h( \cdot , \Lambda) \Vert_{0,\sigma} \int_0^{\sigma}  \vert \tau  - \Phi_b(\Psi_a(\tau)) \vert \, \dd F_0\big(\Psi_a(\tau)-\epsilon\big)  \, , 
\end{eqnarray*}
where the last inequality follows from the fact that $H( \cdot, \Lambda)$ is $\Vert h( \cdot , \Lambda) \Vert_{0,\sigma}$-Lipschitz on $[0,\sigma]$.
Then, utilizing that $\Phi_b$ is necessarily $1/\delta$-Lipschitz for $\Psi_b$ in $\mathcal{C}_\delta([\xi_0,\infty))$, we have
\begin{eqnarray*}
I_1(\sigma) &\leq&  \Vert h( \cdot , \Lambda) \Vert_{0,\sigma} \int_0^{\sigma}  \vert \Phi_b(\Psi_b(\tau))  - \Phi_b(\Psi_a(\tau)) \vert \, \dd F_0\big(\Psi_a(\tau)-\epsilon\big)  \, , \\
&\leq&  \frac{ \Vert h( \cdot , \Lambda) \Vert_{0,\sigma}}{ \delta} \int_0^{\sigma}  \vert\Psi_b(\tau)  - \Psi_a(\tau) \vert \, \dd F_0\big(\Psi_a(\tau)-\epsilon\big)  \, , \\
&\leq& \frac{  \Vert h( \cdot , \Lambda) \Vert_{0,\sigma}}{ \delta} \left[ F_0(t_a-\epsilon\big) - F_0\big(-\epsilon) \right] \Vert \Psi_a -\Psi_b \Vert_{0,\sigma}
\end{eqnarray*}
where the last inequality follows from the fact $F_0$ is an increasing function.
The last integral term in  \eqref{eq:ineqBanach}, denoted by $I_2$, can be bounded via similar argument as
\begin{eqnarray*}
I_2(\sigma) &=&  \int_{t_a}^{t_b} H(\sigma-\Phi_b(t), \Lambda)   \, \dd F\big(t-\epsilon\big) \nonumber\\
 &\leq& 
 H(\sigma-\Phi_b(t_a), \Lambda) \, \left[ F(t_b-\epsilon\big) - F\big(t_a-\epsilon) , \Lambda) \right] \, , \\
&\leq& 
\Vert h( \cdot , \Lambda) \Vert_{0,\sigma} \vert  \Phi_b(\Psi_b(\sigma)) - \Phi_b(\Psi_a(\sigma)) \vert  \, \left[ F(t_b-\epsilon\big) - F\big(t_a-\epsilon) \right] \, , \\
 &=& 
\frac{ \Vert h( \cdot , \Lambda) \Vert_{0,\sigma}}{\delta}  \left[ F(t_b-\epsilon\big) - F\big(t_a-\epsilon) \right] \, \Vert \Psi_a -\Psi_b \Vert_{0,\sigma} \, ,
\end{eqnarray*}
Since by conservation of probability $F(t_b-\epsilon\big) - F\big(-\epsilon) \leq 1$, we have for all $\sigma \geq 0$
\begin{eqnarray*}
\vert \Delta G(\sigma) \vert
= \frac{\lambda}{\nu}  \big \vert I_1(\sigma) + I_2(\sigma) \big \vert
\leq 
\frac{ \lambda \Vert h( \cdot , \Lambda) \Vert_{0,\sigma}}{ \nu \delta} \Vert \Psi_a -\Psi_b \Vert_{0,\sigma} \, .
\end{eqnarray*}
Moreover, for all real valued functions $\psi_a$ and $\psi_b$ over $\mathbbm{R}^+$, we have
\begin{eqnarray*}
\big \vert \mathcal{S}_\delta(\psi_a) - \mathcal{S}_\delta(\psi_b) \big \vert 
&=& 
\Big \vert \sup_{0 \leq s \leq \sigma} \{ \psi_a(s) - \delta s \} - \sup_{0 \leq s \leq \sigma} \{ \psi_b(s) - \delta s \} \Big \vert \, , \\
&\leq& 
\Big \vert \sup_{0 \leq s \leq \sigma} \{ \psi_a(s) -  \psi_b(s)  \} \Big \vert \, , \\
&\leq& 
\sup_{0 \leq s \leq \sigma} \big \vert  \psi_a(s) -  \psi_b(s)   \big \vert = \Vert \psi_a -  \psi_b \Vert_{0,\sigma}\, ,
\end{eqnarray*}
so that we have the inequality
\begin{eqnarray*}
\big \vert \mathcal{F}_\delta[\Psi_b](\sigma)-\mathcal{F}_\delta[\Psi_a](\sigma) \big \vert
&=&
\big \vert \mathcal{S}_\delta[(\mathrm{id}-\lambda G_b)/\nu](\sigma)-\mathcal{S}_\delta[(\mathrm{id}-\lambda G_a)/\nu](\sigma) \big \vert \, , \\
& \leq &
\big \Vert \lambda G_b/\nu-\lambda G_a/\nu] \big \Vert_{0,\sigma}  \, , \\
&\leq &
\frac{ \lambda \Vert h( \cdot , \Lambda) \Vert_{0,\sigma}}{ \nu \delta}  \Vert \psi_a- \psi_b  \Vert_{0,\sigma} \, .
\end{eqnarray*}
We conclude by noticing that $\lim_{\sigma \to 0}  h(\sigma, \Lambda) = 0$, so that for small enough $\sigma>0$, $K= \lambda   \, h(\sigma, \Lambda)/ (\nu \delta)<1$, which shows that the map $\mathcal{F}_\delta$ is a contraction on the space $\mathcal{C}_\delta(0,\sigma)$ for the uniform norm $\Vert \cdot \Vert_{0,\sigma}$.
\end{proof}

By the Banach fixed-point theorem, Proposition \ref{prop:fixedPoint} implies the existence of a unique local solution to the regularized fixed-point problem of Proposition \ref{def:fixedPoint2}.
The following corollary shows that for all $\delta>0$, such a local solution can be maximally extended to the whole real half-line $\mathbbm{R}^+$.

\begin{corollary}\label{cor:globSol}
For all finite refractory periods $\epsilon>0$ and for all parameter $0<\delta<1/\nu$, there is a unique global solution $\psi_\delta$ in $\mathcal{C}_\delta([\xi_0,\infty))$ to the fixed-point problem $\psi=\mathcal{G}_\delta[\psi]$. 
\end{corollary}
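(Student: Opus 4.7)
The plan is to bootstrap the local Banach fixed-point result of Proposition \ref{prop:fixedPoint} into global existence by exploiting the delayed nature of the equation. Proposition \ref{prop:fixedPoint} already provides a unique local solution $\psi_\delta$ on some interval $[\xi_0,\sigma_0)$ with $\sigma_0>0$, and I would extend it inductively. Specifically, assuming $\psi_\delta$ is known on $[\xi_0,\sigma_n]$, I would set up a new contraction on the closed affine subspace of $\mathcal{C}_\delta([\xi_0,\sigma_n+\Delta))$ of functions equal to $\psi_\delta$ on $[\xi_0,\sigma_n]$, for some $\Delta>0$ to be chosen uniformly.

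The crucial structural observation is the same ``loss of renewal character'' already used in the proof of Proposition \ref{prop:fixedPoint}: since every $\Psi$ in $\mathcal{C}_\delta$ satisfies $w_\Psi \leq 1/\nu$, one has for all $\tau \in [\sigma_n,\sigma_n+\nu\epsilon]$ the inclusion $\tau-\eta[\Psi](\tau)=\Phi(\Psi(\tau)-\epsilon)\leq \sigma_n$. Consequently, the renewal-type equation \eqref{eq:DuHamel} determining $G[\Psi](\sigma)$ for $\sigma\in[\sigma_n,\sigma_n+\nu\epsilon]$ only evaluates $G$ at times in $[\xi_0,\sigma_n]$, where its values are already prescribed by $\psi_\delta$. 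This decouples the extension problem from itself and makes the setup analogous to the initial local problem: the contraction estimate reproduces, mutatis mutandis, the chain of inequalities in Proposition \ref{prop:fixedPoint}, with all kernel differences $|H(\sigma-\Phi_a(t),\Lambda)-H(\sigma-\Phi_b(t),\Lambda)|$ now controlled by $\Vert h(\cdot,\Lambda)\Vert_{0,\Delta}$ rather than $\Vert h(\cdot,\Lambda)\Vert_{0,\sigma_n+\Delta}$, since $\Psi_a$ and $\Psi_b$ agree up to $\sigma_n$. Because $h(s,\Lambda)\to 0$ as $s\to 0^+$, there exists $\Delta_0>0$ depending only on $\nu,\lambda,\delta,\Lambda$ with $\lambda\Vert h(\cdot,\Lambda)\Vert_{0,\Delta_0}/(\nu\delta)<1$, so the restricted map is a contraction. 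The Banach fixed-point theorem then yields a unique extension on $[\xi_0,\sigma_n+\min(\nu\epsilon,\Delta_0)]$, still in $\mathcal{C}_\delta$ thanks to the regularization $\mathcal{S}_\delta$ (which enforces the lower bound $\delta$ on difference quotients while the representation $\psi=(\mathrm{id}-\lambda G)/\nu$ with $G$ nondecreasing gives the upper bound $1/\nu$). Iterating with this uniform step size covers $\mathbbm{R}^+$ in countably many steps, and local uniqueness transfers to global uniqueness by induction.

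The main obstacle, and the reason the argument genuinely requires $\epsilon>0$, is ensuring that the step size does not shrink to zero as the iteration progresses, which would allow the solution to exist only up to some finite accumulation time $S<\infty$. The uniform lower bound $\min(\nu\epsilon,\Delta_0)$ resolves this: the $\nu\epsilon$ factor comes from the minimum delay $\eta\geq\nu\epsilon$ guaranteed by Definition \ref{def:Phi}, and $\Delta_0$ depends only on the fixed parameters $\nu,\lambda,\delta,\Lambda$ rather than on the current endpoint $\sigma_n$. A secondary technical point to verify at each step is that the extended $\psi$ retains the two-sided difference quotient bounds defining $\mathcal{C}_\delta$, which follows from the structure $\mathcal{F}_\delta=\mathcal{S}_\delta\circ(\mathrm{id}-\lambda G)/\nu$ as established in Proposition \ref{def:fixedPoint2}; once this is checked on each extension window the induction closes and global existence in $\mathcal{C}_\delta([\xi_0,\infty))$ follows.
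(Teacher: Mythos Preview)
Your proposal is correct and follows essentially the same continuation strategy as the paper: iterate the local contraction of Proposition~\ref{prop:fixedPoint} to extend the solution step by step, exploiting the delay structure so that on each new window the renewal term only references already-determined values. The paper phrases this as resetting the initial conditions to $\big(q(\tau,\cdot),\{g(\sigma)\}_{\xi(\tau)\leq\sigma\leq\tau}\big)$ at each continuation time $\tau$ and re-applying Proposition~\ref{prop:fixedPoint}, whereas you phrase it as working in the affine subspace of $\mathcal{C}_\delta([\xi_0,\sigma_n+\Delta))$ pinned to $\psi_\delta$ on $[\xi_0,\sigma_n]$; these are equivalent reformulations. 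Your version is actually more explicit than the paper's on the one point that matters for global existence---namely, that the step size $\min(\nu\epsilon,\Delta_0)$ is uniform in $n$ because the contraction constant $\lambda\Vert h(\cdot,\Lambda)\Vert_{0,\Delta}/(\nu\delta)$ depends only on the fixed parameters and the conservation-of-probability bound on the flux variation is scale-free---whereas the paper leaves this implicit in the phrase ``locally contracting irrespective of the initial conditions.''
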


\begin{proof} 
For fixed $\delta>0$, a local solution can be continued unconditionally as $\mathcal{F}_\delta$ is locally contracting irrespective of the initial conditions.
More specifically, assuming the solution defined up to $\tau>0$, the continuation process past $\tau$ consists in applying Proposition \ref{prop:fixedPoint} with $\{ g(\sigma) \}_{\tau \leq \sigma \leq \xi(\tau)}$ serving as initial flux conditions,
and with spatial part of the initial condition naturally given by
\begin{eqnarray*}
q(\tau,y) = \kappa(\tau,y,x) \, p_0(x) \, dx + \int_0^\tau \kappa(\tau-\sigma,y,\Lambda) \, \dd G(\xi(\sigma)) \, .
\end{eqnarray*}
\end{proof}

We are now in a position to exhibit a local solution to the dPMF fixed-point problem \ref{def:fixedPoint} by considering the function $\Psi=\lim_{\delta \to 0^+} \Psi_\delta$.
For this solution to be uniquely defined on a nonzero interval $[0,S_1)$, we require that the initial conditions are such that instantaneous blowups are excluded.
Specifically, we assume that the density $q_0$ is locally smooth near zero with $q_0(0)=0$ and $\partial_x q_0(0) /2<1/\lambda$.
This amounts to imposing that all solutions $\Psi_\delta$ are such that ${\Psi'_0}(0^+) = 1-\lambda \partial_x q_0(0) /2$ is bounded away from zero, so that for all $0<\delta_1<\delta_2<\Psi'_0(0^+)$, $\Psi_{\delta_1}(\sigma)=\Psi_{\delta_2}(\sigma)$ for small enough $\sigma>0$.
In turn, such a local solution $\Psi$ can be uniquely continued up to the first time $\Psi'$ becomes zero, therefore giving a criterion to maximally define $S_1$.
This leads to the following proposition:

\begin{theorem}\label{th:smooth}
For all normalized initial conditions $(q_0,g_0)$ in $\mathcal{M}(\mathbbm{R}^+) \times\mathcal{M}([\xi_0,\infty))$ such that $q_0(0)=0$ and  $\partial_x q_0(0) /2<1/\lambda$, there is a unique smooth solution to the fixed-point problem $\Psi=\mathcal{F}_0[\Psi]$ up to the possibly infinite time 
\begin{eqnarray}\label{eq:defS0}
S_1
=
\inf \left\{ \sigma>0 \, \big \vert \, \Psi'(\sigma)  \leq 0 \right\}   
=
\inf \left\{ \sigma>0 \, \big \vert \, g[\Psi](\sigma)  \geq 1/\lambda \right\}  > 0 \, .
\end{eqnarray}
\end{theorem}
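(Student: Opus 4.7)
The plan is to leverage the regularized fixed-point theory (Corollary \ref{cor:globSol}) and show that for sufficiently small regularization parameter $\delta$, the regularizer $\mathcal{S}_\delta$ becomes inactive near $\sigma=0$, so that $\Psi_\delta$ actually solves the unregularized fixed-point equation. The key observation is that if $\Psi = \mathcal{F}_\delta[\Psi]$ and the pre-regularization function $\psi[\Psi](\sigma) = (\sigma - \lambda G[\Psi](\sigma))/\nu$ satisfies $\psi[\Psi]' \geq \delta$ pointwise on some interval, then $\mathcal{S}_\delta$ acts as the identity on $\psi[\Psi]$ there, and $\Psi$ satisfies the unregularized relation $\nu\Psi(\sigma) = \sigma - \lambda G[\Psi](\sigma)$. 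Differentiating yields $\nu\Psi'(\sigma) = 1 - \lambda g[\Psi](\sigma)$, which in particular identifies the two characterizations of $S_1$ in \eqref{eq:defS0}.

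The first step is local existence. The absorbing boundary condition \eqref{eq:abscons} together with the assumption that $q_0$ is locally smooth near zero with $q_0(0)=0$ yields $g[\Psi_\delta](0^+) = \partial_x q_0(0)/2 < 1/\lambda$. Using that $g[\Psi_\delta]$ inherits continuity from the smoothness of $G[\Psi_\delta]$ as a solution to \eqref{eq:DuHamel}---with bounds independent of $\delta$ obtained by restricting attention to $[0,\nu\epsilon]$, where the quasi-renewal equation degenerates to a non-renewal equation as exploited in the proof of Proposition \ref{prop:fixedPoint}---one produces $\sigma_*, \delta^*>0$ such that $g[\Psi_\delta](\sigma) < (1-\nu\delta)/\lambda$ for all $\delta\in(0,\delta^*)$ and $\sigma\in[0,\sigma_*]$. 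On $[0,\sigma_*]$ each such $\Psi_\delta$ solves the unregularized equation, and the contraction property of Proposition \ref{prop:fixedPoint} (which does not invoke the regularizer once $\mathcal{S}_\delta$ is trivial) forces all these $\Psi_\delta$ to coincide. Denote their common value by $\Psi$; it is smooth and satisfies $\Psi' = (1-\lambda g[\Psi])/\nu > 0$ on $[0,\sigma_*]$.

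The second step is maximal continuation. Let $S_\star$ be the supremum of times $\tau \leq S_1$ up to which $\Psi$ extends as a smooth solution of $\Psi=\mathcal{F}_0[\Psi]$ with $\Psi' > 0$ on $(0,\tau)$. If $S_\star < S_1$ then $g[\Psi](S_\star)<1/\lambda$ and $\Psi'(S_\star^-)>0$; re-running the local argument with shifted initial data $q(S_\star,\cdot)$ (constructed as in the proof of Corollary \ref{cor:globSol}) extends $\Psi$ smoothly past $S_\star$, contradicting maximality. Hence $S_\star\geq S_1$. Conversely, the defining equality $\inf\{\sigma>0:\Psi'(\sigma)\leq 0\} = \inf\{\sigma>0: g[\Psi](\sigma)\geq 1/\lambda\}$ follows from the pointwise identity $\nu\Psi' = 1-\lambda g[\Psi]$, and no smooth extension with $\Psi'>0$ can pass $S_1$. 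Uniqueness of the smooth solution on $[0,S_1)$ is inherited pointwise from the uniqueness of each $\Psi_\delta$ in $\mathcal{C}_\delta$.

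The main obstacle is the $\delta$-independent control on the modulus of continuity of $g[\Psi_\delta]$ near $\sigma=0$, which is needed to produce $\sigma_*$ independent of $\delta$ in step one. The quasi-renewal equation depends on $\delta$ through the backward-delay function $\eta[\Psi_\delta]$, but on the initial interval $[0,\nu\epsilon]$ the renewal contribution vanishes and $G[\Psi_\delta]$ reduces to a known smooth functional of the initial data $(q_0,f_0)$ composed with $\Psi_\delta$. Since $\Psi_\delta = \psi[\Psi_\delta]$ in the inactive regime and $\psi[\Psi_\delta]' \leq 1/\nu$ universally (from $g[\Psi_\delta]\geq 0$), the $\delta$-free Lipschitz bound $w_{\Psi_\delta}\leq 1/\nu$ transfers to $G[\Psi_\delta]$ via the smoothness of the first-passage kernel $H(\cdot,\Lambda)$, and then to $g[\Psi_\delta]$ by differentiation, closing the argument.
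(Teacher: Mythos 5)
Your proposal uses the same core ingredients as the paper's proof—the regularized solutions $\Psi_\delta$ from Corollary~\ref{cor:globSol} and the observation that $\mathcal{S}_\delta$ acts as the identity on any $\psi$ whose derivative exceeds $\delta$—but the mechanism for passing to $\delta\to 0$ is genuinely different. The paper defines $S_{1,\delta}=\inf\{\sigma>0:\psi_\delta'(\sigma)\leq\delta\}$, shows that on $[0,S_{1,\delta_2})$ with $\delta_1<\delta_2$ the restrictions $\Psi_{\delta_1}$ and $\Psi_{\delta_2}$ coincide (both solve $\mathcal{F}_{\delta_1}[\Psi]=\Psi$ there, with $\mathcal{C}_{\delta_2}\subset\mathcal{C}_{\delta_1}$, so uniqueness applies), observes that $\delta\mapsto S_{1,\delta}$ is decreasing, and glues the nested family into a unique smooth solution on $[0,S_1)$ with $S_1=\lim_{\delta\to0^+}S_{1,\delta}$. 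This requires no uniform-in-$\delta$ estimate at all. Your plan instead aims for a $\delta$-independent local-existence interval $[0,\sigma_*]$ followed by maximal continuation, and the price you must pay is precisely the $\delta$-free modulus-of-continuity estimate on $g[\Psi_\delta]$ near $\sigma=0$ that you flag as the main obstacle.

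That obstacle is where the gap lies. You claim that the universal bound $w_{\Psi_\delta}\leq 1/\nu$ ``transfers to $G[\Psi_\delta]$ via the smoothness of $H(\cdot,\Lambda)$, and then to $g[\Psi_\delta]$ by differentiation,'' but this skips a step. Differentiating the degenerate-renewal representation
$G[\Psi_\delta](\sigma)=\int_0^\infty H(\sigma,x)q_0(x)\,\dd x+\int_0^\sigma H(\sigma-\tau,\Lambda)\,\dd F_0(\Psi_\delta(\tau)-\epsilon)$
once gives $g[\Psi_\delta](\sigma)$, and the second term is $\int_0^\sigma h(\sigma-\tau,\Lambda)\,\dd F_0(\Psi_\delta(\tau)-\epsilon)$, whose supremum over $[0,\sigma_*]$ depends on the total mass $F_0(\Psi_\delta(\sigma_*)-\epsilon)$ accumulated by the initial flux measure near $-\epsilon$. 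If $f_0$ has an atom or a heavy concentration at $-\epsilon$, one must still argue this contribution stays uniformly small as $\sigma\to0$, which is not a consequence of Lipschitz control on $\Psi_\delta$ alone but of the vanishing of $h(\cdot,\Lambda)$ at the origin. Moreover, to bound the \emph{modulus of continuity} of $g[\Psi_\delta]$ (as opposed to just its supremum) you would need to differentiate this expression once more, bringing in $\partial_\sigma h(\cdot,\Lambda)$, which is singular near the origin; making the resulting bound $\delta$-independent requires care you have not spelled out. In addition, you assert the characterization $S_1=\inf\{\sigma:\Psi'(\sigma)\leq0\}$ follows ``from the pointwise identity $\nu\Psi'=1-\lambda g[\Psi]$,'' but you never verify $\Psi'(S_1^-)=0$ when $S_1<\infty$; the paper closes this by showing $\psi'(S_{1,\delta})=\delta\to0$. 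Finally, the statement that the ``contraction property forces all these $\Psi_\delta$ to coincide'' needs the nesting $\mathcal{C}_{\delta_2}\subset\mathcal{C}_{\delta_1}$ to be invoked explicitly so that the uniqueness in one space applies to both candidates—exactly the observation the paper uses to avoid uniformity altogether. In short, your strategy can be made to work, but it imports a nontrivial estimate that the paper's monotonicity argument renders unnecessary.
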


\begin{proof}
Suppose there exists a solution $\Psi$ to the fixed-point problem $\Psi=\mathcal{F}_0[\Psi]$.
Then, the smooth function $\psi=( \mathrm{id} - \lambda G[\Psi]  ) / \nu $ in $\mathcal{C}_0([\xi_0,\infty))$ is such that $\Psi=\mathcal{S}_0[\psi]$ with $\psi'_0(0^+)=\Psi'_0(0^+)=(1-\lambda \partial_x q_0(0) /2) /\nu>0$.
Let us then introduce the positive time 
\begin{eqnarray*}
S_{1,\delta}=\inf \left\{ \sigma >0 \, \big \vert \, \psi'(\sigma) \leq \delta \right\} >0\, ,
\end{eqnarray*}
so that $\Psi=\psi$ is necessarily smooth on $[0,S_{1,\delta})$.
For all $\delta>0$, $\Psi$ is also determined as the solution of the regularized fixed-point problem $\Psi=\mathcal{F}_\delta[\Psi]$ on the possibly infinite time interval $[0,S_{1,\delta})$.
Indeed, on $[0,S_{1,\delta})$ we have  
\begin{eqnarray*}
\mathcal{F}_\delta [\Psi](\sigma) 
= \mathcal{S}_\delta \left[\big( \sigma - \lambda G[\Psi](\sigma)  \big) / \nu \right]
= \mathcal{S}_\delta \left[\psi \right]
= \psi
=  \Psi  \, , \quad 0 \leq \sigma \leq S_{1,\delta}\, .
\end{eqnarray*}
By Corollary \ref{cor:globSol}, there is a unique solution $\Psi_\delta$ to $\Psi=\mathcal{F}_\delta[\Psi]$ in $\mathcal{C}_\delta([\xi_0,\infty)) \subset \mathcal{C}_0([\xi_0,\infty))$.
Thus, the time $S_{1,\delta}$ is equivalently defined as
\begin{eqnarray*}
S_{1,\delta}=\inf \left\{ \sigma >0 \, \big \vert \, \psi'_\delta(\sigma) \geq \delta \right\} >0\, , \quad \mathrm{with} \quad  \psi_\delta = \big( \sigma - \lambda G[\Psi_\delta](\sigma)  \big) / \nu \, ,
\end{eqnarray*}
and $\Psi=\mathcal{F}_0[\Psi]$ admits $\Psi_\delta$ as unique smooth solution on $[0,S_\delta)$.
Moreover, for all $\delta_2>\delta_1>0$, since we have $\Psi_{\delta_1}=\Psi_{\delta_2}$ on $[0,S_{\delta_2})$ and since $S_{1,\delta}$ is a decreasing function of $\delta$, the solution to $\Psi=\mathcal{F}_0[\Psi]$ can be uniquely extended to a smooth function on 
$[0,S_1)$, with $S_1 = \lim_{\delta \to 0} S_{1,\delta}$.
Finally, it remains to check that $S_1$ is also defined as \eqref{eq:defS0}.
For all $0 \leq \sigma < S_1$, there is $\delta>0$ such that $S_{1,\delta}>\sigma$, so that we have $\psi'(\sigma)=\psi'_\delta(\sigma) > \psi'_\delta(S_{1,\delta}) = \delta>0$.
There is nothing more to show if  $S_1=\infty$.
If $S_1<\infty$, as a bounded increasing function on $[0,S_1)$, the solution $\psi$ admits a left limit in $S_1$.
Thus $\psi$ can be extended by continuity to $[0,S_1]$ with: 
\begin{eqnarray*}
\psi'(S_1) = \lim_{\delta \to 0^+}\psi'(S_{1,\delta})  =\lim_{\delta \to 0^+} \psi'_\delta(S_{1,\delta}) = \lim_{\delta \to 0^+} \delta = 0 \, .
\end{eqnarray*}
\end{proof}

\subsection{Analytical characterization of full blowups}

Assuming the dynamics starting at time zero to be initially smooth with $q_0(0)=0$ and $\partial_x q_0(0)/2<1/\lambda$, the first blowup time $T_1$ occurs when the cumulative flux $f$, or equivalently $\Phi'=1+ \lambda f$, first diverges, i.e.,
\begin{eqnarray*}
T_1
&=&
\inf \left\{ T>0 \, \Big \vert \, \lim_{t \to T^-}  \Phi'(t)  = \infty \right\}   \, ,\\
&=&
\inf \left\{ \Psi(S)>0 \, \Big \vert \,  \lim_{\sigma \to S^-}  \Psi'(\sigma) = 0 \right\}  = \Psi(S_1) > 0 \, .
\end{eqnarray*}
Thus, in the time-changed picture, the  blowup condition corresponds precisely to the definition of $S_1$, the terminal point of the interval over which Theorem \ref{th:smooth} guarantees the existence of smooth, increasing solutions.
This justifies defining the following blowup conditions with respect to the time-changed dynamics:

\begin{definition}\label{def:blowup}
A blowup occurs if $T_1 =  \Psi(S_1)<\infty$ which is equivalent to 
\begin{eqnarray*}
S_1 = \inf \big\{ \sigma>0 \, \big \vert \, g(\sigma) \geq 1/\lambda  \big\} < \infty \, ,
\end{eqnarray*}
 where $g = \partial_\sigma G$ is the  instantaneous inactivation flux for the time-changed process $Y_\sigma$.
\end{definition}

As stated at the end of Section \ref{subsec:MKV}, the emergence of blowup clearly depends on the initial conditions.
However, for large enough interaction parameter, blowups will generically occur in finite time.
To see this, consider for instance initial conditions of the form $(q_0,g_0)=(\delta_{x_0},0)$.
For such initial conditions, the fixed-point problem $\Psi=\mathcal{F}_\delta[\Psi]$ admits an initially smooth solution $\Psi$ with instantaneous flux $g$ satisfying
\begin{eqnarray*}
g(\sigma) = h(\sigma, x_0) + \int_0^\sigma   h(\sigma-\tau, \Lambda) \, \dd G(\xi(\tau)) \geq h(\sigma, x) \, ,
\end{eqnarray*}
where $h(\sigma,x)= \partial_\sigma H(\sigma, x) \geq 0$ represents the first-passage density to zero of a Wiener process started in $x$ with negative unit drift:
\begin{eqnarray}\label{eq:kFPT}
h(\sigma,x) = \frac{x}{\sqrt{2 \pi \sigma^3}} \exp \left(-\frac{(x-\sigma)^2}{2 \sigma} \right)\, .
\end{eqnarray}
Moreover, we have $\sup_{\sigma \geq 0} h(\sigma, x_0) \geq   h(x_0, x_0) = 1 /\sqrt{2 \pi x_0}$.
This implies that the blowup condition $g(\sigma) \leq 1/\lambda$ is satisfied in finite time whenever $\lambda> C_{x_0}=\sqrt{2 \pi x_0}$.

From now on, let us consider that the blowup condition is first met in $S_1<\infty$.
On $[0,S_1]$, the inverse time change $\Psi$ is a smooth function with $\lim_{\sigma \to S_1^-} \Psi'(\sigma) = 0$.
Thus,  the diverging behavior of $f = (\Phi'-\nu)/\lambda$ is determined by the order of the first nonzero left-derivative of $\Psi$ in $S_1$, which is always larger or equal to two.
In all generality, this order depends on the initial conditions.
However, for generic initial conditions, we expect that
\begin{eqnarray*}
\inf \left\{ n \geq 1 \, \Big \vert \, \lim_{\sigma \to S_1^-} \Psi^{(n)}(\sigma) \neq 0 \right\} = 2 \, .
\end{eqnarray*}
Moreover, given that we necessarily have $\Psi'>0$ on $[0,S_1)$, $S_1$ must be a local maximum so that the criterion $\lim_{\sigma \to S_1^-} \Psi^{(n)}(\sigma) \neq 0$ is actually equivalent to $\lim_{\sigma \to S_1^-} \Psi^{(n)}(\sigma) > 0$.
The above observations lead us to introduce an additional condition for blowups, which we refer to as the full-blowup condition:

\begin{definition}\label{def:strictblowup}
The blowup time $T_1 = \Phi(S_1)$ satisfies the full-blowup condition if $\lim_{\sigma \to S_1^-}\partial_\sigma g(\sigma)>0$.
\end{definition}

The definition of the full-bowup condition naturally follows from the fact that $\Psi'=(1-\lambda g) / \nu$ on $[0,S_1]$, so that $\lim_{\sigma \to S_1^-} \Psi''(\sigma)=\lim_{\sigma \to S_1^-} \partial_\sigma g(\sigma)$.
It is straightforward to check that full blowups are marked by H\"older singularity with exponent $1/2$ for the time change $\Phi$:

\begin{proposition}
Under the full-blowup condition, the flux density $f$ diverges in $T_1$ as 
\begin{eqnarray*}
f(t) \stackrel[t \to T_1^-]{}{\sim} \frac{ \lambda}{
 \sqrt{2a_1(T_1-t)}} \, , \quad \mathrm{with} \quad a_1 = (\lambda/\nu) \partial_\sigma g(S_1) \, .
\end{eqnarray*}
\end{proposition}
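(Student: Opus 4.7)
The plan is a direct asymptotic analysis near the blowup onset $S_1$ based on a Taylor expansion of the inverse time change $\Psi$. By Theorem \ref{th:smooth}, $\Psi$ is smooth on $[0,S_1]$ with $\Psi' = (1-\lambda g)/\nu$, and by Definition \ref{def:blowup} we have $\Psi'(S_1)=0$ since $g(S_1)=1/\lambda$. Differentiating the identity $\Psi'=(1-\lambda g)/\nu$ once more, one finds $\Psi''(\sigma)=-(\lambda/\nu)\partial_\sigma g(\sigma)$, so the full-blowup condition $\lim_{\sigma\to S_1^-}\partial_\sigma g(\sigma)>0$ is equivalent to $\Psi''(S_1^-)=-a_1<0$ with $a_1=(\lambda/\nu)\partial_\sigma g(S_1^-)$.

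Given this, I would perform a second-order Taylor expansion of $\Psi$ at $S_1^-$, namely
\begin{equation*}
\Psi(\sigma) \;=\; T_1 \;-\; \tfrac{a_1}{2}\,(S_1-\sigma)^2 \;+\; o\bigl((S_1-\sigma)^2\bigr),
\end{equation*}
together with the first-order expansion $\Psi'(\sigma) = a_1(S_1-\sigma)+o(S_1-\sigma)$. Setting $t=\Psi(\sigma)$, the expansion of $\Psi$ gives $T_1-t \sim \tfrac{a_1}{2}(S_1-\sigma)^2$, which I invert asymptotically to obtain $S_1-\sigma \sim \sqrt{2(T_1-t)/a_1}$ as $t\to T_1^-$. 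Substituting this into the expansion of $\Psi'$ yields
\begin{equation*}
\Psi'(\sigma) \;\sim\; a_1\sqrt{2(T_1-t)/a_1} \;=\; \sqrt{2\,a_1(T_1-t)}.
\end{equation*}

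Finally, since $\Phi=\Psi^{-1}$ is differentiable with $\Phi'(t)=1/\Psi'(\sigma)$ on $[0,T_1)$, I deduce $\Phi'(t)\sim 1/\sqrt{2\,a_1(T_1-t)}$. The announced asymptotic for $f$ then follows by applying the identity $f(t)=(\Phi'(t)-\nu)/\lambda$, read off from $\Phi(t)=\nu t+\lambda F(t)$: the diverging term $\Phi'(t)$ dominates $\nu$, and one is left with the claimed rate divergence at $T_1^-$.

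There is no serious obstacle: the whole argument is a routine asymptotic inversion once the second-order Taylor expansion is justified. The only point needing slight care is the $C^2$ regularity of $\Psi$ up to $S_1^-$, which is not vacuous because $\Psi$ is only asserted smooth on the open interval $[0,S_1)$ in Theorem \ref{th:smooth}. This regularity follows from the identity $\Psi=(\mathrm{id}-\lambda G[\Psi])/\nu$ valid on $[0,S_1]$ combined with Proposition \ref{prop:Renewal}, which guarantees that $G[\Psi]$ is a smooth function on $(0,\infty)$; in particular $g=\partial_\sigma G$ admits a left limit of its derivative at $S_1$, whose nonvanishing sign is precisely the full-blowup hypothesis.
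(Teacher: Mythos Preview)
Your proposal is correct and follows essentially the same route as the paper: a second-order Taylor expansion of $\Psi$ at $S_1$ under the full-blowup condition, inversion to obtain the square-root behavior of $\Phi$ near $T_1$, and then reading off the divergence of $f$. The only cosmetic difference is that the paper extracts $f$ from the relation $f(t)=\nu g(\Phi(t))/\bigl(1-\lambda g(\Phi(t))\bigr)$ and Taylor expands $g$ in the denominator, whereas you use $\Phi'(t)=1/\Psi'(\sigma)$ together with $f=(\Phi'-\nu)/\lambda$; these are equivalent since $\Psi'=(1-\lambda g)/\nu$.
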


\begin{proof}
The generic blowup condition implies that the inverse time change $\Psi$ admits a zero left derivative when $t \to T_1^-$.
The full-blowup condition further ensures that $\Psi$ behaves locally quadratically in the left vicinity of $S_1$.
Specifically, we have
\begin{eqnarray*}
\Psi(\sigma) = T_1 - a_1(\sigma-S_1)^2/2+o\big((\sigma-S_1)^2\big) \, , \quad  \sigma < S_1 \, ,
\end{eqnarray*}
where the quadratic coefficient is given by
\begin{eqnarray*}
a_1=-\partial^2_\sigma \lim_{\sigma \to S_1}\Psi(\sigma) = (\lambda/\nu) \partial_\sigma g(S_1)  >0 \, .
\end{eqnarray*}
Thus, for $t<T_1$,  just before blowup,  the time change $\Phi$ behaves as
\begin{eqnarray*}
\Phi(t) = S_1 - \sqrt{2(T_1-t)/a_1} + o\big(\sqrt{T_1-t}\big) \, .
\end{eqnarray*}
In turn, this implies a blowup divergence as the reciprocal of a square root: 
\begin{eqnarray*}
f(t) 
=
\frac{\nu g \left(\Phi (t)\right)}{1 - \lambda g \left(\Phi (t)\right)}
\stackrel[t \to T_1^-]{}{\sim}
\frac{\nu \lambda}{\lambda \partial_\sigma g(S_1)\big(S_1-\Phi(t)\big)}
=
\frac{ \lambda}{
 \sqrt{2a_1(T_1-t)}} \, .
\end{eqnarray*}
\end{proof}

A synchronization event occurs in $T_1$ if the time change $\Phi$ exhibit a jump discontinuity in $T_1$ after a blowup.
Such a discontinuity corresponds to the inverse time change $\Psi$ being flat on a non-empty interval $[S_1,U_1)$, with $S_1=\Phi(T_1^-)<U_1=\Phi(T_1)$.
By smoothness of $\Psi$ on $[0,S_1]$, every synchronization event is triggered by a blowup but in all generality, a blowup need not trigger a synchronization event, which corresponds to the marginal case $S_1=\Phi(T_1^-)=U_1=\Phi(T_1)$.
However, under the full-blowup condition, a blowup always trigger a synchronization event, i.e., $S_1=\Phi(T_1^-)<U_1=\Phi(T_1)$.

\begin{theorem}\label{th:jump}
Suppose $S_1$ is a full blowup time for the time-changed dynamics with normalized initial conditions $(q_0,g_0)$ such that $q_0(0)=0$ and $\partial_x q_0(0)/2<1/\lambda$.
Then, the solution to the fixed-point problem $\Psi=\mathcal{F}_0[\Psi]$ can be uniquely extended as a constant function on $[S_1,U_1]$ with $U_1=S_1+\lambda \pi_1$ where $\pi_1$ satisfies $0<\pi_1<\Vert q(S_1, \cdot) \Vert_1 \leq 1$ and is defined as 
\begin{eqnarray}\label{eq:defjump}
\pi_1 = \inf \left\{ p \geq 0 \, \bigg \vert \, p > \int_{0}^\infty H(\lambda p, x )  q(S_1,x ) \, \dd x \right\} \, .
\end{eqnarray}
\end{theorem}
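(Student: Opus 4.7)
The plan is to analyze the putative extension of $\Psi$ as a constant equal to $T_1 = \Psi(S_1)$ on an interval $[S_1, U_1]$ and to show both that this ansatz is forced by the fixed-point equation past $S_1$ and that its maximal length equals $\lambda \pi_1$. The key observation is that if $\Psi \equiv T_1$ on $[S_1, U_1]$, then the backward-delay function reads $\eta(\sigma) = \sigma - \Phi(T_1 - \epsilon)$, so that the backward-time function $\xi = \mathrm{id} - \eta$ is frozen at $\xi(S_1) = \Phi(T_1 - \epsilon)$ throughout $[S_1, U_1]$. Consequently, the reset source term $\frac{\dd}{\dd\sigma}[G(\xi(\sigma))] \delta_\Lambda$ vanishes in the time-changed PDE problem \ref{def:qPDE} on $(S_1, U_1]$, and $q$ evolves there as a pure absorbing Wiener diffusion from the initial datum $q(S_1, \cdot)$. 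By the Markov property of the underlying Wiener dynamics, the cumulative flux increment admits the closed form
\begin{equation*}
G(\sigma) - G(S_1) = \int_0^\infty H(\sigma - S_1, x)\, q(S_1, x) \, \dd x, \qquad \sigma \in [S_1, U_1].
\end{equation*}

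Substituting this expression into the running-maximum fixed-point equation \eqref{eq:fixedPoint} and using that $\Psi(S_1) = (S_1 - \lambda G(S_1))/\nu$ is attained as the supremum at $S_1$, the condition $\Psi(\sigma) \equiv T_1$ on $[S_1, S_1 + s]$ translates exactly into the scalar inequality $s \leq \lambda \int_0^\infty H(s, x) q(S_1, x)\, \dd x$ for all $s \in [0, U_1 - S_1]$. The maximal flat extension is therefore obtained by setting $U_1 - S_1$ equal to the first $s > 0$ at which this inequality breaks; with the substitution $s = \lambda p$, this yields $U_1 = S_1 + \lambda \pi_1$ with $\pi_1$ as in \eqref{eq:defjump}. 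Uniqueness of the constant extension is then automatic, since the maximal length is determined unambiguously by the explicit Markovian formula for $G$.

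For the bounds $0 < \pi_1 < \|q(S_1, \cdot)\|_1 \leq 1$, the natural object to examine is $F(s) := \lambda \int_0^\infty H(s, x) q(S_1, x)\, \dd x - s$, which satisfies $F(0) = 0$ and $F'(0^+) = \lambda g(S_1) - 1 = 0$ since the blowup condition gives $g(S_1) = 1/\lambda$. Because the reset source in \eqref{eq:qPDE} is localized at $\Lambda$ and does not enter spatial derivatives of $q$ at $x = 0$, the map $\sigma \mapsto \partial_\sigma g(\sigma) = \partial_x^2 q(\sigma, 0) + \partial_x^3 q(\sigma, 0)/2$ is continuous across $S_1$, so the full-blowup condition $\lim_{\sigma \to S_1^-} \partial_\sigma g(\sigma) > 0$ yields $F''(0^+) = \lambda\, \partial_\sigma g(S_1) > 0$. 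Hence $F$ is locally convex at zero with vanishing first derivative and therefore strictly positive on a right-neighborhood of zero, forcing $\pi_1 > 0$. Conversely, since $H(\lambda p, x) < 1$ for any finite $p > 0$ and $x > 0$, and $q(S_1, \cdot)$ has positive mass on $(0, \infty)$, one gets $\int_0^\infty H(\lambda p, x) q(S_1, x)\, \dd x < \|q(S_1, \cdot)\|_1$ for every finite $p$; taking $p = \|q(S_1, \cdot)\|_1$ then violates the integral inequality in \eqref{eq:defjump}, yielding $\pi_1 < \|q(S_1, \cdot)\|_1 \leq 1$, where the last bound follows from conservation of probability.

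The main technical hurdle will be rigorously justifying the vanishing of the reset source and the resulting Markovian representation of $G$ on the flat region, which requires careful handling of the quasi-renewal equation \eqref{eq:DuHamel_int} across discontinuities of $\Phi$; the regularization provided by the time change is precisely what makes this step tractable.
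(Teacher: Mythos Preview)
Your plan is correct and follows essentially the same route as the paper: freeze $\xi$ on the flat region to get the Markovian formula for $G(\sigma)-G(S_1)$, translate the running-supremum condition into the scalar inequality $s \leq \lambda\int_0^\infty H(s,x)q(S_1,x)\,\dd x$, and then study the function $\zeta(p)=p-\int_0^\infty H(\lambda p,x)q(S_1,x)\,\dd x$ (your $-F(\lambda p)/\lambda$) to obtain $0<\pi_1<\Vert q(S_1,\cdot)\Vert_1$ via $\zeta'(0^+)=0$ and $\zeta''(0^+)<0$.

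One correction on where the work lies: the Markovian representation of $G$ is \emph{not} the technical hurdle---once $\Psi$ is constant, $\xi(\sigma)=\Phi(T_1-\epsilon)$ is trivially frozen and the reset term drops out of \eqref{eq:DuHamel_int} immediately. The genuine difficulty, which you skate over, is computing $F'(0^+)$ and especially $F''(0^+)$. These require the short-time asymptotics $\lim_{s\to 0^+}\int_0^\infty h(s,x)q(S_1,x)\,\dd x=\partial_x q(S_1,0)/2$ and $\lim_{s\to 0^+}\int_0^\infty \partial_s h(s,x)q(S_1,x)\,\dd x=\tfrac{1}{2}\big(\partial_x^2 q(S_1,0)+\partial_x^3 q(S_1,0)/2\big)$, which the paper establishes separately (Appendix Propositions~\ref{app:Asympt1} and~\ref{app:Asympt2}); the latter needs the hypothesis $\partial_x q(S_1,0)+\partial_x^2 q(S_1,0)/2=0$, itself a consequence of the absorbing condition $q(\sigma,0)=0$ fed into the PDE. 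Your sentence ``the map $\sigma\mapsto\partial_\sigma g(\sigma)$ is continuous across $S_1$'' addresses a different (and easier) issue---regularity of the solution at $S_1$---and does not by itself give $F''(0^+)=\lambda\,\partial_\sigma g(S_1)$. Also note that $\partial_\sigma g=\tfrac{1}{2}(\partial_x^2 q+\partial_x^3 q/2)$ at $x=0$, not the formula you wrote (missing a factor of $1/2$), though the sign conclusion is unaffected.
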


\begin{proof}
The proof proceeds in two steps:
$(i)$ We show that if it is possible to locally extend a solution $\Psi$ past a full blowup in $S_1$, such an extension is uniquely determined on the maximum interval $[S_1,U_1]$, where $\Psi$ is constant.
$(ii)$ We show that under full-blowup conditions, it is always possible to extend a solution past $S_1$, i.e., $S_1<U_1$.

$(i)$ Consider the smooth solution $\Psi$ of the fixed-point problem $\Psi=\mathcal{F}_\delta[\Psi]$ on $[0,S_1]$.
Suppose there exists $U>S_1$ such that the solution $\Psi$ can be extended on $[0,U]$.
Then on $[0,U]$, there is a smooth function $\psi$ such that $\Psi=\mathcal{S}_0[\psi]$ with $\psi=(\mathrm{id}-\lambda G[\Psi])/\nu$.
Moreover, the full-blowup condition entails that  $ \psi'(S_1) = \lim_{\sigma \to S_1^-} \Psi(\sigma)=0$ and $\psi''(S_1) =  \lim_{\sigma \to S_1^-}\Psi''(\sigma)<0$.
Thus, $\psi$ must be locally decreasing for $\sigma>S_1$, so that 
\begin{eqnarray*}
U^\dagger = \sup \left\{\sigma \in [0,U] \, \big \vert \, \psi(\sigma) \leq \psi(S_1)=\Psi(S_1) \right\} >0 \, .
\end{eqnarray*}
Then, for all $0 \leq \sigma \leq U^\dagger$, we have $\psi(\sigma) \leq \Psi(S_1)$ so that $\Psi=\mathcal{S}_0[\psi]$ implies that $\Psi(\sigma)=\Psi(S_1)$ on $[S_1,U^\dagger]$. 
This shows that under the full blowup condition, smooth solution $\Psi$ can only be continued locally as a constant function, if it is possible at all.
Suppose that $\Psi$ is constant for all $0 \leq \sigma \leq U_1$, for some real value $U_1>S_1$.
Then the backward function $\xi(\sigma)=\sigma-\eta(\sigma)=\Phi(\Psi(\sigma)-\epsilon)=\Phi(-\epsilon)$ is also constant.
Consequently, the integral term in the renewal-type equation \eqref{eq:DuHamel} attached to the fixed-point problem \ref {def:fixedPoint} vanishes on $[S_1, U_1]$ and  for all $S_1 \leq \sigma \leq U_1$, we must have $\Psi=\mathcal{S}_0[\psi]$ where: 
\begin{eqnarray*}
 \psi(\sigma)=\big(\sigma-\lambda G(\sigma) \big)/\nu \quad \mathrm{with} \quad G(\sigma) =  G(S_1) + \int_0^\infty H(\sigma,x) q(S_1,x) \, dx  \, .
\end{eqnarray*}
Thus the auxiliary function $\psi$ is uniquely determined on $[0,U_1]$.
For this determination to be consistent, we must have that for all $0 \leq \sigma \leq U_1$, $\psi(\sigma) \leq \Psi(S_1)$, which is equivalent to
\begin{eqnarray*}
\nu\big(\psi(\sigma)- \Psi(S_1) \big)
&=&
\sigma-S_1-\lambda \big(G(\sigma) - G(S_1) \big) \, , \\
&=&
\sigma-S_1 + \lambda \int_0^\infty H(\sigma,x) q(S_1,x) \, dx \leq 0 \, . \\
\end{eqnarray*}
This shows that the maximum possible value $U_1$ ensuring that $\Psi$ is constant on $[0,U_1]$ is:
\begin{eqnarray*}
\quad \quad U_1&=& \inf \left\{\sigma>0 \, \big \vert \, \psi(\sigma) > 0 \right\}  = \lambda  \inf \left\{p>0 \, \bigg \vert \, p - \int_0^\infty H(\lambda p,x) q_0(x) \, dx  > 0 \right\}  
= \lambda \pi_1 \, .
\end{eqnarray*}

$(ii)$ To conclude, it remains to show that the interval $S_1<U_1$ is nonempty under full-blowup condition, so that $\Psi$ can be maximally continued as a constant function on a nonempty interval $[S_1,U_1)$.
To that end, we will actually show that the number $\pi_1$, as defined  in \eqref{eq:defjump}, is such that $0<\pi_1< \Vert q_0 \Vert_1$ under full-blowup condition.. 
Let us consider the smooth function $\zeta$ defined on $[0,\infty)$ by
\begin{eqnarray*}
\zeta(p) = p- \int_{0}^\infty  H(\lambda p, x)  q(S_1,x) \, \dd x   \, .
\end{eqnarray*}
Our first goal is to prove that  $\pi_1$ exists as a root of the function $\zeta$ and is such that $0<\pi_1 \leq \Vert q_0 \Vert_1 \leq 1$.
To show this, observe that $\zeta$ is a smooth function on $[0,\infty)$ such that for all $p>\Vert q(S_1, \cdot)\Vert_1$, we have
\begin{eqnarray*}
\zeta(p) \geq p - \Vert H(\lambda p, \cdot ) \Vert_\infty \int_0^\infty q(S_1, x)  \, \dd x = p - \Vert q(S_1, \cdot)\Vert_1 > 0 \, ,
\end{eqnarray*}
Thus to establish the existence of a root  in $(0,\Vert q(S_1, \cdot)\Vert_1)$, it is enough to show that $\zeta$ takes negative values in $(0,\Vert q(S_1, \cdot)\Vert_1)$.
In fact, we will show that $\zeta$ is negative in the vicinity of zero under full blowup condition.
As $\zeta$ satisfies $\lim_{p \to 0^+} \zeta(p)=0$, we first consider the asymptotic behavior of its derivative function given by 
\begin{eqnarray*}
\zeta'(p) = 1 -  \lambda \int_{0}^\infty h(\lambda p, x)  q(S_1, x) \, \dd x \, ,
\end{eqnarray*}
where $h$ is defined as the first-passage density in \eqref{eq:kFPT}.
The limit behavior of $\zeta'$ is
\begin{eqnarray*}
\lim_{p \to 0^+} \zeta'(p) = 1 - \lambda \lim_{p \to 0^+} \int_{0}^\infty h(\lambda p, x)  q(S_1,x) \, \dd x = 1 - \lambda \partial_x q(S_1,0)/2  \, ,
\end{eqnarray*}
where the last equality follows from the absorbing boundary condition in zero  and the asymptotic property of first-passage density $h$: $\lim_{t \to 0} h(t, x) = \delta(x)-\delta'(x)/2$ in the sense of generalized distributions given by \ref{app:Asympt1}.
From there, under blowup condition, we have  
\begin{eqnarray*}
\lim_{p \to 0} \zeta'(p) = 1 - \lambda \partial_x q(S_1,0)/2 = 1 - \lambda g(S_1,0) = 0 \, .
\end{eqnarray*}
Next, we evaluate the limit of the second derivative $\zeta''$.
To do so, we utilize the asymptotic result obtained in \ref{app:Asympt2}.
To apply this result, we utilize the facts that $(1)$ $x \mapsto q(S_1,x)$ admits a locally bounded fourth derivative in zero with $q(0,x)=0$ 
and that $(2)$ injecting $q(\sigma,0)=0$ for all $\sigma>0$ in \eqref{eq:qPDE}  yields $\partial_x q(S_1,0)+\partial^2_x q(S_1,0)/2=0$.
The asymptotic result from \ref{app:Asympt2} implies that
\begin{eqnarray*}
\lim_{p \to 0^+} \zeta''(p) 
&=&  - \lambda^2 \lim_{p \to 0^+} \int_{0}^\infty \partial_\sigma h(\lambda p, x)  q(S_1,x) \, \dd x \, , \\
&=&  -\frac{\lambda^2}{2} \left( \partial^2_x q(S_1, 0) + \partial^3_x q(S_1, 0)/2 \right)  \, .
\end{eqnarray*}
Differentiating \eqref{eq:qPDE} with respect to $x$ below the reset site, i.e., for $x <\Lambda$, we get
\begin{eqnarray*}
\partial_x \partial_\sigma q =  \partial^2_x q+ \partial^3_x q/2 \, ,
\end{eqnarray*}
so that specifying the above relation in $(S_1, 0)$ yields:
\begin{eqnarray*}
\lim_{p \to 0^+} \zeta''(p) 
=  
-\lambda^2 \partial_x \partial_\sigma q (S_1, 0) / 2 \, .
\end{eqnarray*}
Finally, permuting  the order of the partial derivatives in the cross-derivative term yields  $\partial_\sigma \partial_x q(S_1,0)/2 = \partial_\sigma g(S_1)$ so that, under the full blowup condition, we get
\begin{eqnarray*}
\lim_{p \to 0} \zeta''(p) =- \lambda^2 \partial_\sigma g(S_1)  < 0 \, .
\end{eqnarray*}
This implies that as a root of $\zeta$, $\pi_1$ exists and is such that $0<\pi_1 \leq \Vert q_0 \Vert_1 \leq 1$.
\end{proof}

The above proposition has a direct interpretation in terms of the original dPMF dynamics.
In the event of a full blowup, the dynamics of the time-changed process $Y$ cannot unfold smoothly after the blowup time $S_1$ as it would imply that $\Psi'=(1-\lambda g)/\nu<0$ on some nonempty interval to the left of $S_1$. 
In other word, as a decreasing function, the function $\Psi$ would implement a time-reversal in $S_0$, which is not physically admissible.
Physical solutions resolve this conundrum by freezing the clock for the original time at $T_0=\Psi(S_0)$, while letting the clock for the changed time run past $S_0$.
In the time-changed picture, this corresponds to stalling the reset of inactive processes, while letting active processes inactivate according to their linear, noninteracting dynamics.
Such a non-reset dynamics continues in the time-changed picture until the original clock can start running again, which happens at time $U_1=S_1+\lambda \pi_1$.
Incidentally, the number $0<\pi_1<1$ is the fraction of processes that synchronously inactivates at time $T_0$, which is marked by a discontinuity of size $\lambda \pi_1$ in the time change $\Phi=\Psi^{-1}$. 
We summarize the above discussion by stating the following corollary.

\begin{corollary}\label{cor:jump}
Under the full-blowup condition at time $T_1=\Psi(S_1)$, a synchronization event occurs with size $0<\big(\Phi(T_1)-\Phi(T_1^-)\big)/\lambda=\pi_1<1$.
\end{corollary}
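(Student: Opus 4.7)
The plan is to directly read off the conclusions of Theorem \ref{th:jump} through the lens of the right-continuous inverse $\Phi=\Psi^{-1}$ introduced in Definition \ref{def:Phi}. The heart of the matter is that a constant plateau of $\Psi$ is precisely a jump discontinuity of $\Phi$, with the length of the plateau equal to the size of the jump.

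First I would invoke Theorem \ref{th:jump} to obtain that the unique extension of $\Psi$ past $S_1$ is constant, equal to $T_1=\Psi(S_1)$, on the interval $[S_1,U_1]$ with $U_1-S_1=\lambda \pi_1$ and $0<\pi_1<\Vert q(S_1,\cdot)\Vert_1 \leq 1$. Next, I would compute the one-sided values of $\Phi$ at $T_1$ directly from the definition $\Phi(t)=\inf \{\sigma\geq 0 \, \vert \, \Psi(\sigma)>t\}$. Theorem \ref{th:smooth} guarantees that $\Psi$ is smooth and strictly increasing on $[0,S_1]$ with $\Psi(S_1)=T_1$, so the restriction of $\Phi$ to $(-\infty,T_1)$ coincides with the classical inverse of $\Psi$ on $[0,S_1)$, which yields $\Phi(T_1^-)=S_1$ by continuity. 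The value $\Phi(T_1)$ in turn equals $U_1$, since the maximality argument for $U_1$ in the proof of Theorem \ref{th:jump} ensures that the auxiliary function $\psi$ becomes strictly greater than $\Psi(S_1)$ past $U_1$, so that $\Psi=\mathcal{S}_0[\psi]$ strictly exceeds $T_1$ immediately past $U_1$.

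Combining the two evaluations yields $\Phi(T_1)-\Phi(T_1^-)=U_1-S_1=\lambda \pi_1$, and dividing by $\lambda$ gives the announced identity, with the bounds $0<\pi_1<1$ inherited directly from Theorem \ref{th:jump} via $\pi_1 < \Vert q(S_1,\cdot)\Vert_1 \leq 1$. Because the corollary is essentially a dictionary translation from $\Psi$ to $\Phi$, there is no substantial obstacle: the only point needing care is the verification that $\Psi$ strictly escapes the plateau value $T_1$ just past $U_1$, and this is already built into the construction of $U_1$ as the largest $\sigma$ with $\psi(\sigma)\leq \Psi(S_1)$ in the proof of Theorem \ref{th:jump}.
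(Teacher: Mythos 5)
Your proof is correct and matches what the paper intends: the paper itself does not give a formal proof, stating the corollary merely as a summary of the discussion following Theorem~\ref{th:jump}, and your explicit computation of $\Phi(T_1^-)=S_1$ (via strict monotonicity of $\Psi$ on $[0,S_1]$) and $\Phi(T_1)=U_1$ (via the plateau $[S_1,U_1]$ and the escape of $\psi$ above $T_1$ past $U_1$) simply supplies the dictionary translation from $\Psi$ to its right-continuous inverse that the paper leaves implicit. The only minor caveat is that identifying $\Phi(T_1)$ with $U_1$ implicitly presumes $\Psi$ does escape the plateau on the right, which the paper also takes for granted; your appeal to the maximality construction of $U_1$ in the proof of Theorem~\ref{th:jump} is the right place to anchor that step.
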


Observe that the definitions of the generic blowup trigger time $S_1$ and of the blowup exit time $U_1=S_1+\lambda \pi_1$ is rather imprecise with respect to the behavior of $\Psi$ in the immediate vicinity of $S_1$ and $U_1$.
These imprecisions are the sources of difficulties in extending the existence of a solution over the whole real half-line $\mathbbm{R}^+$.
To exhibit such a solution using our prior results, we need to check that the auxiliary function $\psi= (\mathrm{id}-\lambda G )/\nu$ is such that $\psi'(U_1)>0$ at blowup exit time $U_1$, so that we can invoke Theorem \ref{th:smooth} to continue the solution over some nonempty interval $[U_1,S_2)$, where $S_2$ is the next putative blowup time where $\lim_{\sigma \to S_2-} \Psi'(\sigma)=0$.
Then, if $S_2<\infty$, invoking Theorem \ref{th:jump} to further continue the solution via blowup resolution necessitates checking the full-blowup condition: $\lim_{\sigma \to S_2-} \Psi''(\sigma)<0$.
Assuming that all these conditions check ad infinitum, exhibiting a solution over the whole real half-line $\mathbbm{R}^+$ will finally require to exclude the occurrence of accumulation points, whereby an infinite number of vanishingly small blowups happens in finite time.
The main result of \cite{TTLS} is to show that all these checks and requirements are met for sufficiently large interaction parameter $\lambda$ and sufficiently small refractory period $\epsilon>0$.
Establishing this result relies on a detailed analysis of the time-changed dynamics and is beyond the scope of this work, which is mainly concerned with introducing the time-changed picture to characterize mean-field dynamics with blowups.

\section*{Acknowledgements}
The authors would like to thank the anonymous referees, the Associate
Editor, and the Editor for their constructive comments that improved the
quality of this paper.

The first author was supported by an Alfred P. Sloan Research Fellowship FG-2017-9554 and a CRCNS award DMS-2113213 from the National Science Foundation.

The second author was supported in part by a grant from the Center for Theoretical and Computation Neuroscience from the University of Texas, Austin.

%
%
%

\begin{appendix}

\section{Asymptotic behavior of $h(\sigma, \cdot)$ and $\partial_\sigma h(\sigma, \cdot)$ when $\sigma \to 0^+$}\label{appA}

This appendix comprises two useful results about the short-time asymptotics of the first-passage kernel $(\sigma,x) \mapsto h(\sigma, x)$ and its time-derivative $(\sigma,x) \mapsto \partial_\sigma h(\sigma, x)$.
The first result follows from classical work in ~\cite{Roz84}, whereas the second result requires original analysis.

\begin{proposition}\label{app:Asympt1}
Consider a continuous function $q:\mathbbm{R} \to \mathbbm{R}$.
Suppose moreover that $q$ is continuously differentiable on $[0,\delta]$ for some $\delta>0$, then
\begin{eqnarray}
\lim_{\sigma \to 0^+} \int_0^\infty \partial_\sigma h(\sigma,x) q(x) \, \dd x 
&=&
 q(0) + \partial_x q(0)/2 \, .
\end{eqnarray}
\end{proposition}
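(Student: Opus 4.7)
The plan is to prove the short-time asymptotic by a Gaussian rescaling argument that exploits the concentration of $h(\sigma,\cdot)$ near the absorbing boundary at the natural diffusive scale $x \sim \sqrt{\sigma}$, combined with the backward Kolmogorov equation $\partial_\sigma h = -\partial_x h + \tfrac{1}{2}\partial_x^2 h$ (with Dirichlet condition $h(\sigma,0)=0$ for $\sigma>0$) that allows the $\sigma$-derivative to be transferred onto spatial ones. The strategy proceeds in two stages: an integration-by-parts step that reduces everything to integrals of $h$ itself against $q$ and its derivatives, followed by a rescaled Gaussian analysis.

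First, I would integrate by parts twice in $x$ on the half-line, using $h(\sigma,0)=0$ and the super-exponential decay of $h$ at infinity, to obtain
\[
\int_0^\infty \partial_\sigma h(\sigma, x)\, q(x)\, dx \;=\; -\tfrac{1}{2}\,\partial_x h(\sigma, 0)\, q(0) \;+\; \int_0^\infty h(\sigma, x)\,\bigl(\partial_x q(x) + \tfrac{1}{2}\partial_x^2 q(x)\bigr) \, dx,
\]
with explicit boundary factor $\partial_x h(\sigma, 0) = e^{-\sigma/2}/\sqrt{2\pi\sigma^3}$. Under the hypothesized regularity of $q$ (continuous on $\mathbb{R}$, $C^1$ on $[0,\delta]$), the second derivative appearing on the right will be handled by mollification together with a spatial cutoff localizing the analysis to a neighborhood of $x=0$.

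Second, for each integral of the form $\int_0^\infty h(\sigma, x) r(x) dx$ produced above, I would perform the change of variable $y = (x-\sigma)/\sqrt{\sigma}$, which converts $h(\sigma, x) dx$ into $\tfrac{1}{\sqrt{2\pi}}\bigl(1 + y/\sqrt{\sigma}\bigr) e^{-y^2/2}\, dy$ over $y \in (-\sqrt{\sigma}, \infty)$. Taylor-expanding $r(\sigma + y\sqrt{\sigma})$ around $0$ and invoking the half-Gaussian moments $\int_0^\infty e^{-y^2/2} dy = \sqrt{2\pi}/2$, $\int_0^\infty y e^{-y^2/2} dy = 1$, $\int_0^\infty y^2 e^{-y^2/2} dy = \sqrt{2\pi}/2$ yields an asymptotic expansion of the form $r(0)/\sqrt{2\pi\sigma} + \tfrac{1}{2}(r(0) + r'(0)) + O(\sqrt{\sigma})$. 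Substituting $r = \partial_x q + \tfrac{1}{2}\partial_x^2 q$ and recombining with the boundary term from the first step, the singular $\sigma^{-3/2}$ and $\sigma^{-1/2}$ contributions are expected to cancel, leaving the announced finite residual $q(0) + \partial_x q(0)/2$.

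The main obstacle is making this cancellation rigorous under the weak regularity hypothesized on $q$: the formal expansion uses values of $q$ beyond the interval $[0,\delta]$ on which $q$ is $C^1$, and involves $\partial_x^2 q$ which is not a priori controlled. The cleanest way around this is to split the spatial integral at an intermediate scale $\sqrt{\sigma} \ll a < \delta$, handle the near-boundary contribution $[0,a]$ by the local Taylor expansion (where only $\partial_x q(0)$ enters, matching the hypothesized $C^1$ structure), and bound the far contribution $(a,\infty)$ using the continuity of $q$ and the super-exponential decay of $h$ and $\partial_x h$ for $|x-\sigma| \gtrsim a$. This cutoff-and-Taylor scheme is precisely the route by which the underlying distributional identity $\lim_{\sigma\to 0^+}h(\sigma,\cdot) = \delta_0 - \tfrac{1}{2}\delta_0'$ (cited to \cite{Roz84} and recalled in the proof of Theorem~\ref{th:jump}) is established, and the asserted formula $q(0) + \partial_x q(0)/2$ is recovered by pairing this limit with $q$ via the standard dualities $\langle \delta_0,q\rangle = q(0)$ and $\langle \delta_0',q\rangle = -\partial_x q(0)$.
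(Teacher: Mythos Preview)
There is a genuine gap, but it originates in a typo in the printed statement that you have carried through. The paper's own proof, and the only place the result is invoked (the proof of Theorem~\ref{th:jump}), both concern $\int_0^\infty h(\sigma,x)\,q(x)\,\dd x$, not $\int_0^\infty \partial_\sigma h(\sigma,x)\,q(x)\,\dd x$. The $\partial_\sigma h$ version you are trying to prove is in fact divergent: the closed-form expression $I_1(\sigma)=\int_0^\infty \partial_\sigma h(\sigma,x)\,x\,\dd x = e^{-\sigma/2}/\sqrt{2\pi\sigma}+\tfrac12\bigl(1+\mathrm{Erf}(\sqrt{\sigma/2})\bigr)$ displayed in Proposition~\ref{app:Asympt2} blows up like $(2\pi\sigma)^{-1/2}$, while for $q(x)=x$ your right-hand side would be $1/2$. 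So your announced cancellation of the $\sigma^{-3/2}$ and $\sigma^{-1/2}$ singularities cannot happen. Concretely, after your integration by parts the sole $\sigma^{-3/2}$ contribution is the boundary term $-\tfrac12\partial_x h(\sigma,0)\,q(0)=-q(0)e^{-\sigma/2}/(2\sqrt{2\pi\sigma^3})$, with nothing to balance it unless $q(0)=0$; and even then the $\sigma^{-1/2}$ piece $r(0)/\sqrt{2\pi\sigma}$ with $r=\partial_x q+\tfrac12\partial_x^2 q$ survives unless $\partial_x q(0)+\tfrac12\partial_x^2 q(0)=0$, which is the \emph{extra} hypothesis of Proposition~\ref{app:Asympt2}, not of this one.

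For the intended statement (with $h$ in place of $\partial_\sigma h$), the paper's route is much shorter than your backward-Kolmogorov-plus-double-IBP scheme: the substitution $y=x-\sigma$ splits $h(\sigma,x)$ as $\tfrac{y}{\sqrt{2\pi\sigma^3}}e^{-y^2/(2\sigma)}+\tfrac{1}{\sqrt{2\pi\sigma}}e^{-y^2/(2\sigma)}$; the second summand is the drifted heat kernel, and the first is dispatched by the classical asymptotic $\int_0^\infty \tfrac{y}{\sqrt{2\pi\sigma^3}}e^{-y^2/(2\sigma)}q(y)\,\dd y\to \partial_x q(0)/2$ cited from~\cite{Roz84}. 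No $\partial_x^2 q$ ever appears, so the $C^1$ hypothesis suffices directly and your mollification layer is unnecessary. (One residual subtlety, present in both the paper's argument and yours, is that the limit is finite only when $q(0)=0$; this is exactly the absorbing boundary condition $q(S_1,0)=0$ used in the application.)
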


\begin{proof}
 After a simple change of variable $y=x-t$, one can check that
\begin{eqnarray}
\int_{0}^\infty h(\sigma,x) q(x) \, dx
= \int_{0}^\infty \frac{ye^{-\frac{y^2}{2\sigma}}}{\sqrt{2 \pi \sigma^3}} f(y+\sigma)  \, dy
+\int_{0}^\infty \frac{e^{-\frac{(x-\sigma)^2}{2\sigma}}}{\sqrt{2 \pi \sigma}} f(x)  \, dx
\end{eqnarray}
where we recognize the drifted heat kernel in the last integral term.
The asymptotic behavior of the first integral term follows the analysis in~\cite{Roz84}, which shows that
\begin{eqnarray}\label{eq:heatAsympt}
\lim_{\sigma \to 0+} \int_{0}^\infty \frac{xe^{-\frac{x^2}{2\sigma}}}{\sqrt{2 \pi \sigma^3}} q(x) \, dx = \partial_x q(0)/2 \, ,
\end{eqnarray}
for all functions $q$ with locally bounded derivative in zero.
Thus, we have
\begin{eqnarray}\label{eq:h1Asympt}
\lim_{\sigma \to 0+}  h(\sigma,x) = \delta_0(x) -  \delta'_0(x)/2 \, .
\end{eqnarray}
in the distribution sense.
\end{proof}

\begin{proposition}\label{app:Asympt2}
Consider a continuous function $q:\mathbbm{R} \to \mathbbm{R}$ with nonnegative value and polynomial growth on $\mathbbm{R}^+$.
Suppose moreover that $q$ is four times continuously differentiable on $[0,\delta]$ for some $\delta>0$ and $q(0)=0$ and $\partial_x q(0)+\partial^2_x q(0)/2=0$, then
\begin{eqnarray}
\lim_{\sigma \to 0^+} \int_0^\infty \partial_\sigma h(\sigma,x) q(x) \, \dd x 
&=&
\frac{1}{2} \left( \partial^2_x q(0) + \partial^3_x q(0)/2\right)\, .
\end{eqnarray}
\end{proposition}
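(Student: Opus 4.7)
The plan is to reduce Proposition A.2 to Proposition A.1 via the backward Kolmogorov equation satisfied by the first-passage kernel $h$. Since $h(\sigma, x)$ is the first-passage time density to zero of a Wiener process with unit negative drift started at $x > 0$, the first step is to verify, either by direct differentiation of \eqref{eq:kFPT} or by differentiating in $\sigma$ the backward equation satisfied by the survival probability, that
\begin{eqnarray*}
\partial_\sigma h(\sigma, x) = -\partial_x h(\sigma, x) + \tfrac{1}{2}\partial^2_x h(\sigma, x), \quad \sigma > 0, \; x > 0,
\end{eqnarray*}
together with the absorbing boundary value $h(\sigma, 0) = 0$.

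The next step is to substitute this PDE into the integrand and integrate by parts twice in $x$ to produce the key identity
\begin{eqnarray*}
\int_0^\infty \partial_\sigma h(\sigma, x) q(x) \, \dd x = \int_0^\infty h(\sigma, x) \bigl(\partial_x q(x) + \tfrac{1}{2}\partial^2_x q(x)\bigr) \, \dd x.
\end{eqnarray*}
Boundary terms at $x = 0$ vanish thanks to $h(\sigma, 0) = 0$ together with the hypothesis $q(0) = 0$, while boundary terms at infinity vanish because $h(\sigma, \cdot)$ and its spatial derivatives decay like a shifted Gaussian for fixed $\sigma > 0$, which dominates the polynomial growth of $q$. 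Setting $r(x) = \partial_x q(x) + \tfrac{1}{2}\partial^2_x q(x)$---which is $C^2$ on $[0, \delta]$ since $q$ is $C^4$ there---and applying Proposition A.1 to $r$ then gives
\begin{eqnarray*}
\lim_{\sigma \to 0^+} \int_0^\infty h(\sigma, x) r(x) \, \dd x = r(0) + \tfrac{1}{2}\partial_x r(0).
\end{eqnarray*}
The hypothesis $\partial_x q(0) + \tfrac{1}{2}\partial^2_x q(0) = 0$ is precisely $r(0) = 0$, so the limit collapses to $\tfrac{1}{2}\partial_x r(0) = \tfrac{1}{2}\bigl(\partial^2_x q(0) + \tfrac{1}{2}\partial^3_x q(0)\bigr)$, as announced.

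The main obstacle lies in the regularity assumption: $q$ is only $C^4$ on the small interval $[0, \delta]$, so the derivatives invoked in the integrations by parts need not exist globally. I would handle this with a smooth cutoff $\chi$ equal to $1$ on $[0, \delta/2]$ and supported in $[0, \delta)$, splitting $q = \chi q + (1-\chi) q$. The integrations by parts would be carried out on the smooth, compactly supported piece $\chi q$---whose derivatives at $0$ coincide with those of $q$ since $\chi \equiv 1$ near $0$---while the contribution of $(1-\chi) q$, supported in $\{x \geq \delta/2\}$, tends to zero as $\sigma \to 0^+$ because $\partial_\sigma h(\sigma, x)$ converges uniformly to zero on $\{x \geq \delta/2\}$, a fact that follows from the explicit Gaussian form of $h$ and its $\sigma$-derivative together with the polynomial growth hypothesis on $q$.
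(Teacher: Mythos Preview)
Your argument is correct and takes a genuinely different route from the paper. The paper proceeds by brute force: it Taylor-expands $q$ to third order near zero, computes the moment integrals $I_n(\sigma)=\int_0^\infty \partial_\sigma h(\sigma,x)\,x^n\,\dd x$ for $n=1,2,3$ and the absolute fourth moment $J_4(\sigma)$ in closed form, shows $J_4\to 0$ so the remainder vanishes, observes that $I_1$ and $I_2$ each diverge like $1/\sqrt{2\pi\sigma}$ but with coefficients that cancel precisely under the hypothesis $\partial_x q(0)+\tfrac12\partial_x^2 q(0)=0$, and then reads off the finite limit from $I_2-I_1$ and $I_3$. Your approach instead exploits the backward Kolmogorov equation $\partial_\sigma h = -\partial_x h + \tfrac12\partial_x^2 h$ to transfer the $\sigma$-derivative onto $q$ via two integrations by parts, reducing the whole statement to an application of Proposition~A.1 to $r=\partial_x q+\tfrac12\partial_x^2 q$.

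What each buys: the paper's computation is entirely explicit and self-contained, requiring no PDE input beyond the formula for $h$. Your argument is shorter and more structural; it makes transparent that the hypothesis $\partial_x q(0)+\tfrac12\partial_x^2 q(0)=0$ is exactly the condition $r(0)=0$ needed for the limit $\int h\,r$ to be finite (without it that integral blows up like $r(0)/\sqrt{2\pi\sigma}$, mirroring the divergence of $I_1,I_2$ in the paper's calculation), and it explains the form of the answer as $\tfrac12 r'(0)$. Your cutoff device plays the same localising role as the paper's opening tail estimate $\int_\delta^\infty |\partial_\sigma h|\,q \to 0$, and the boundary bookkeeping ($h(\sigma,0)=0$, $q(0)=0$) is exactly what is needed to kill the boundary terms at $x=0$.
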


\begin{proof}
As $q:\mathbbm{R} \to \mathbbm{R}$  has polynomial growth, i.e., there is an integer $d>0$ such that $q(x) \leq K (1+x^d)$ on $\mathbbm{R}^+$ for some real $K>0$.
For all real $\delta>0$ and all integers $m, n \geq 0$, we have
\begin{eqnarray}
\lim_{\sigma \to 0^+} \frac{1}{\sigma^m}\int_\delta^\infty e^{-\frac{(x-\sigma)^2}{2\sigma}}x^n q(x) \, \dd x = 0 \, .
\end{eqnarray}
This follows from the fact that $x \mapsto e^{-\frac{(x-\sigma)^2}{2\sigma}}x^n$ is decreasing for $x>\left(\sigma+\sqrt{\sigma(4n+\sigma)}\right)$. Then for $0 \leq \sigma<\delta^2/(n+\delta)$, we have:
\begin{eqnarray}
\sup_{x\geq \delta} e^{-\frac{(x-\sigma)^2}{2\sigma}}x^n = e^{-\frac{(\delta-\sigma)^2}{2\sigma}} \delta^n \, .
\end{eqnarray}
This allows one to write for $0 \leq \sigma<\delta^2/(n+d+2+\delta)$
\begin{eqnarray}
\int_\delta^\infty e^{-\frac{(x-\sigma)^2}{2\sigma}}x^n q(x) \, \dd x
&\leq&
\int_\delta^\infty e^{-\frac{(x-\sigma)^2}{2\sigma}}x^n\big(1+x^{d+2} \big) \frac{K(1+x^d)}{1+x^{d+2}} \, \dd x \, , \\
&\leq&
\, e^{-\frac{(\delta-\sigma)^2}{2\sigma}}\delta^n K \big(1+\delta^{d+2} \big)   \int_\delta^\infty  \frac{1+x^d}{1+x^{d+2}} \, \dd x \, , \\
&\leq&
K_{d,\delta} \, \delta^n  e^{-\frac{(\delta-\sigma)^2}{2\sigma}} \, , 
\end{eqnarray}
where the constant $K_{d,\delta}$ only depends on  $d$ via the Gamma function:
\begin{eqnarray}
K_{d,\delta} 
&=& K \big(1+\delta^{d+2} \big) \int_0^\infty  \frac{1+x^d}{1+x^{d+2}} \, \dd x \, , \\
&=& K \big(1+\delta^{d+2} \big) \left(2 \Gamma \left[\frac{1+d}{2+d} \right] \Gamma \left[\frac{3+d}{2+d} \right] \right) < \infty \, .
\end{eqnarray}
We conclude by observing that
\begin{eqnarray}
0
\leq
\frac{1}{\sigma^m}\int_\epsilon^\infty e^{-\frac{(x-\sigma)^2}{2\sigma}}x^n q(x) \, \dd x
\leq
K_{d,\delta} \, \delta^n e^{-\frac{(\delta-\sigma)^2}{2\sigma}}/\sigma^m   \xrightarrow{\sigma \to 0^+} 0
\end{eqnarray}
The above observation implies that if $q$ is a function with polynomial growth, then for all $\delta>0$ we have
\begin{eqnarray}\label{eq:remainBound}
\lim_{\sigma \to 0^+} \int_\delta^\infty  \vert \partial_\sigma h(\sigma,x) \vert q(x) \, \dd x = 0 \, ,
\end{eqnarray}
so that if the limits at stake exist, we have
\begin{eqnarray}
\lim_{\sigma \to 0^+} \int_0^\infty \partial_\sigma h(\sigma,x) q(x) \, \dd x 
&=&
\lim_{\sigma \to 0^+} \int_0^\delta \partial_\sigma h(\sigma,x) q(x) \, \dd x  \, .
\end{eqnarray}
Moreover, if there exists $\delta>0$ such that $\vert \partial^4_x q \vert \leq B_\delta < \infty$ on $[0,\delta]$ with $q(0)=0$, we have
\begin{eqnarray}
\Bigg \vert \int_0^\delta \partial_\sigma h(\sigma,x) \left( q(x) - \sum_{n=1}^3 \frac{\partial^n_x q(0)}{n!} x^n \right)  \, \dd x \Bigg \vert 
\leq 
B_\delta    \int_0^\delta   \vert \partial_\sigma h(\sigma,x) \vert x^4   \, \dd x  \, .
\end{eqnarray}
Let us then introduce the integrals
\begin{eqnarray}
I_n(\sigma) &=& \int_0^\infty \partial_\sigma h(\sigma,x) x^n  \, \dd x , \quad 1 \leq n\leq 3 \, ,\\
J_4(\sigma) &=& \int_0^\infty \vert \partial_\sigma h(\sigma,x) \vert x^4 \, \dd x ,
\end{eqnarray}
The latter integral can be evaluated in closed form as
\begin{eqnarray}
\lefteqn{J_4(\sigma)
=
e^{-\sigma/2}(36+\sigma(61+\sigma(16+\sigma))) \sqrt{\frac{\sigma}{2 \pi }} + } \\
&& \hspace{40pt} \sigma(5+\sigma)(15+ \sigma(12+\sigma)) \left(1+\mathrm{Erf} \left(\sqrt{\frac{\sigma}{2}} \right) \right) \, .
\end{eqnarray}
The above expression shows that  $\lim_{\sigma \to 0^+}  J_4(\sigma)=0$ so that
\begin{eqnarray}
\lim_{\sigma \to 0^+}  \int_0^\delta \vert \partial_\sigma h(\sigma,x) \vert x^4   \, \dd x =  \lim_{\sigma \to 0^+}  J_4(\sigma) - \lim_{\sigma \to 0^+}  \int_\delta^\infty \vert \partial_\sigma h(\sigma,x) \vert x^4   \, \dd x = 0 \, .
\end{eqnarray}
This shows that if the limits at stake exist, we must have
\begin{eqnarray}
\lim_{\sigma \to 0^+} \int_0^\delta \partial_\sigma h(\sigma,x)  q(x)  \, \dd x 
&=&
\lim_{\sigma \to 0^+} \sum_{n=1}^3 \frac{\partial^n_x q(0)}{n!} \int_0^\delta \partial_\sigma h(\sigma,x)   x^n  \, \dd x  \, , \\
&=&
\lim_{\sigma \to 0^+} \sum_{n=1}^3 \frac{\partial^n_x q(0)}{n!} I_n(\sigma) \, , 
\end{eqnarray}
where the last equality follows from \eqref{eq:remainBound}.
For $n=1,2,3$, we find that
\begin{eqnarray}
I_1(\sigma) = \frac{e^{-\sigma/2}}{\sqrt{2 \pi \sigma}} + \frac{1}{2} \left(1+\mathrm{Erf} \left(\sqrt{\frac{\sigma}{2}} \right) \right) \, ,
\end{eqnarray}
\begin{eqnarray}
I_2(\sigma)
=
\frac{e^{-\sigma/2}(1+2 \sigma)}{\sqrt{2 \pi \sigma}} + \left( \frac{3}{2} + \sigma\right) \left(1+\mathrm{Erf} \left(\sqrt{\frac{\sigma}{2}} \right) \right) \, ,
\end{eqnarray}
\begin{eqnarray}
I_3(\sigma)
=
3 \left(e^{-\sigma/2}(3+\sigma) \sqrt{\frac{\sigma}{2 \pi }} + \left(\frac{1}{2}+\sigma \left(2+\frac{\sigma}{2} \right) \right)\left(1+\mathrm{Erf} \left(\sqrt{\frac{\sigma}{2}} \right) \right) \right) \, ,
\end{eqnarray}
where one can observe that $I_1(\sigma)$ and $I_2(\sigma)$ diverge when  $\sigma \to 0^+$. 
Such diverging behaviors cancel out under the assumption that $\partial_x q(0)+\partial^2_x q(0)/2=0$, as we then have
\begin{eqnarray}
\sum_{n=1}^3 \frac{\partial^n_x q(0)}{n!} I_n(\sigma) 
= 
\frac{\partial^2_x q(0)}{2} (I_2(\sigma)-I_1(\sigma)) + \frac{\partial^2_x q(0)}{6} I_3(\sigma) \, ,
\end{eqnarray}
with $\lim_{\sigma \to 0^+} I_2(\sigma)-I_1(\sigma) = 1$ and $\lim_{\sigma \to 0^+} I_3(\sigma)=3/2$.
\end{proof}

\end{appendix}

\bibliographystyle{imsart-number}


\end{document}